\definecolor{aleacolour}{rgb}{0.09,0.32,0.44} 
\newcommand{\zz}{\mathbb{Z}}
\newcommand{\ee}{\mathbb{E}}
\def\({\left(}
\def\){\right)}
\newcommand{\dd}{{\delta}}
\newcommand{\sstar}{{s^*}}
\newcommand{\integer}[1]{\left\lfloor #1 \right\rfloor} 
\newcommand{\ceiling}[1]{\left\lceil #1 \right\rceil} 
\newcommand\given[1][]{\:#1\vert\:}
\newcommand{\yb}[1]{Z_{#1}^B} 
\newcommand{\ys}{Z_S} 
\newcommand{\ysn}{Z^n_S} 
\newcommand{\ysq}[1]{Z_{S_{#1}}}
\newcommand{\ssett}{\mathcal{A}}
\newcommand{\sset}{\mathcal{AP}}
\newcommand{\ssetb}{\overline{\mathcal{AP}}}
\newcommand{\fv}{\tilde f}
\newcommand{\bench}[1]{h^*_{#1}}
\newcommand{\sm}{\sigma_\mu}
\newcommand{\tb}{t^{\text{b}}}
\newcommand{\tg}{t^{\text{g}}}
\newcommand{\Ub}{U_{\text{b}}}
\newcommand{\Ug}{U_{\text{g}}}
\newcommand{\x}{x} 
\theoremstyle{plain}
\newtheorem{theorem}{Theorem}
\newtheorem{lemma}[theorem]{Lemma}
\newtheorem{claim}[theorem]{Claim}
\newtheorem{corollary}[theorem]{Corollary}
\newtheorem{conjecture}[theorem]{Conjecture}
\theoremstyle{definition}
\newtheorem{definition}[theorem]{Definition}
\theoremstyle{remark}
\theoremstyle{property}
\title{ On the asymptotic confirmation of the Faudree-Lehel Conjecture for general graphs}
\author{
Jakub Przyby{\l}o\thanks{AGH University of Science and Technology, Faculty of Applied Mathematics, al. A. Mickiewicza 30, 30-059 Krakow, Poland. 
 Email: \href{mailto:jakubprz@agh.edu.pl} {\nolinkurl{jakubprz@agh.edu.pl}}.}
\and 
Fan Wei\thanks{Department of Mathematics, Princeton University, Princeton, NJ 08540, USA. Research supported by NSF Award DMS-1953958. Email: \href{mailto:fanw@princeton.edu} {\nolinkurl{fanw@princeton.edu}}.}
}
\date{}
\begin{document}
\maketitle  

\begin{abstract}
Given a simple graph $G$, the {\it irregularity strength} of $G$, denoted by $s(G)$, is the least positive integer $k$ such that there is a weight assignment on edges $f: E(G) \to \{1,2,\dots, k\}$  
attributing distinct weighted degrees: $\fv(v):= \sum_{u: \{u,v\}\in E(G)} f(\{u,v\})$ 
to all vertices $v\in V(G)$.
It is straightforward that $s(G) \geq n/d$ for every $d$-regular graph $G$ on $n$ vertices with $d>1$. In 1987, Faudree and Lehel conjectured in turn that there is an absolute constant $c$ such that $s(G) \leq n/d + c$ for all such graphs. 
Even though the conjecture has remained open in almost all relevant cases, it is more generally believed that 
there exists a universal constant $c$ such that $s(G) \leq n/\dd + c$ for every graph $G$ on $n$ vertices with minimum degree $\dd\geq 1$ which does not contain an isolated edge.
In this paper we confirm that the generalized Faudree-Lehel Conjecture holds 
for graphs with 
$\dd\geq n^\beta$ where $\beta$ is any fixed constant larger than $0.8$.
Furthermore, we confirm that 
the conjecture 
holds in general asymptotically.
That is we prove that for any $\varepsilon\in(0,0.25)$ there exist absolute constants $c_1, c_2$  such that for all  graphs $G$ on $n$ vertices with minimum degree 
$\dd\geq 1$ and without isolated edges, 
$s(G) \leq \frac{n}{\dd}(1+\frac{c_1}{\dd^{\varepsilon}})+c_2$,
thus extending in various aspects and strengthening a recent result of Przyby{\l}o, who showed that 
$s(G) \leq \frac{n}{d}(1+ \frac{1}{\ln^{\epsilon/19}n})=\frac{n}{d}(1+o(1))$ 
for $d$-regular graphs with 
$d\in [\ln^{1+\epsilon} n, n/\ln^{\epsilon}n]$, and improving an earlier general upper bound: $s(G)< 6\frac{n}{\dd}+6$ of Kalkowski, Karo\'nski and Pfender.
\end{abstract}

\section{Introduction}
Let $G$ be a simple graph. For a positive integer $k$, an edge-weighting function 
$f: E(G)\to \{1,2,\dots, k\}$ is called a \emph{$k$-irregular assignment} for $G$ if the \emph{weighted degrees}, denoted by $\fv(v) = \sum_{u\in N(v)} f(\{v,u\})$ are pairwise distinct for all $v \in V(G)$. The \emph{irregularity strength} of $G$, denoted $s(G)$, is the least positive integer $k$, if exists, such that there is a $k$-irregular  assignment  
for $G$; we set $s(G)=\infty$ for the remaining graphs.  It is easy to see that $s(G) < \infty$ if and only if $G$ has no isolated edges and at most one isolated vertex. 

The irregularity strength was first introduced by Chartrand, 
Jacobson, Lehel,  Oellermann,  Ruiz, and Saba~\cite{CJLORS}, 
in particular in reference to research on irregular graphs~\cite{HighlyIrregular,k-PathIrregular,ChartrandErdosOellermann}, 
as $s(G)$ may naturally be alternatively set down 
as the least $k$ such that one may produce an irregular multigraph, i.e. a multigraph with pairwise distinct degrees, of a given simple graph $G$ by blowing each edge $e$ of $G$ to at most $k$ copies of $e$.
In general it is known that $s(G) \leq n -1$ for any graph $G$ of order $n$ and with finite irregularity strength 
which is not a triangle, 
as proved by Aigner and Triesch~\cite{AT} and Nierhoff~\cite{Nsingle}. Though the family of stars witnesses the tightness of this upper bound, it can be greatly improved for graphs with larger minimum degree.
In particular, already 
Faudree and Lehel~\cite{FL} showed that $s(G) \leq \ceiling{n/2}+9$ for every $d$-regular graph with $n, d\geq 2$.
This was however still far from the expected optimal upper bound.
By a simple counting argument, it is easy to see that 
\[s(G) \geq \ceiling{\frac{n+d+1}{d}}.\] 
This lower bound motivated Faudree and Lehel~\cite{FL} to conjecture in 1987 that the value $n/d$ is close to optimal.
In fact, this conjecture was first posed by Jacobson, as mentioned in~\cite{L-survey}. 
\begin{conjecture}[Faudree-Lehel Conjecture \cite{FL}]\label{conj:main}
There is a constant $C>0$ such that for all $d$-regular graphs $G$ with $n$ vertices and $d\geq 2$, \[s(G) \leq \frac{n}{d}+C.\] 
\end{conjecture}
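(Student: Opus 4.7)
The plan is to approach this conjecture via a probabilistic two-stage weighting scheme, since no direct combinatorial construction is currently known to achieve an absolute additive constant $C$. The core idea is to prescribe distinct \emph{target weighted degrees} $\tau(v)$ to the vertices of $G$: choose $n$ distinct integers filling an interval of length $n-1$ inside $[d,(\lceil n/d\rceil+C)d]$ and assign them to vertices in some structured way, possibly after first partitioning the vertex set into classes that will be separated by large gaps in the target spectrum. The goal is then to construct an edge-weighting $f:E(G)\to\{1,\ldots,\lceil n/d\rceil+C\}$ for which each $\tilde f(v)$ lies in a narrow window around $\tau(v)$ --- narrow enough that the resulting weighted degrees are automatically pairwise distinct, so the distinctness condition is enforced by localization rather than by a global bijection onto the targets.

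First I would sample a random first-stage weighting from a narrow distribution centered so that $\mathbb{E}[\tilde f(v)]=\tau(v)$ for every $v$. Since each $\tilde f(v)$ is a sum of $d$ bounded independent contributions, Chernoff/Azuma-type concentration yields $|\tilde f(v)-\tau(v)|\leq C_0\sqrt{d\log n}$ with high probability, and the Lov\'asz Local Lemma (with dependency degree $O(d^2)$) secures this estimate simultaneously for all vertices. Second, I would reserve from the outset a per-edge correction budget of order $\sqrt{d}$, giving each vertex a signed adjustment range of roughly $d^{3/2}$, and solve the resulting prescribed-residual system $\sum_{u\sim v} g(uv)=-r(v)$ --- where $r(v)$ is the first-stage residual --- by decomposing it into edge-disjoint \emph{transfer paths} between excess and deficit vertices. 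Sufficient minimum-degree / expansion properties of $G$ should guarantee enough such paths and keep the corrections from cascading beyond the reserved budget.

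The main obstacle, and the reason the conjecture has resisted proof for three decades, is the low-degree regime. When $d=O(\log n)$ the Azuma deviation $\sqrt{d\log n}$ essentially saturates the per-vertex correction capacity, so the two phases barely mesh; and when $d=n^{o(1)}$ the coarse rounding of targets inside $[d,(\lceil n/d\rceil+C)d]$ already contributes an uncontrolled additive overshoot. A realistic intermediate target is therefore the conjecture in the range $d\geq(\log n)^{1+\epsilon}$, matching Przyby{\l}o's result quoted in the abstract, while the sparse regime appears to require a genuinely different structural argument --- perhaps by combining the existing bound $s(G)\leq\lceil n/2\rceil+9$ of Faudree and Lehel for very small $d$ with a careful interpolation into the medium-$d$ window covered by the probabilistic scheme, and by allowing $\tau$ to be chosen adaptively in response to local graph structure rather than fixed in advance.
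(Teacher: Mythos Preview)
The statement you are attempting is Conjecture~\ref{conj:main}, which the paper does \emph{not} prove in full; it remains open. The paper establishes only the partial results of Theorems~\ref{thm:main2} and~\ref{thm:main1}: an asymptotic bound $s(G)\le (n/\delta)(1+C_1/\delta^\epsilon)+C_2$ valid for all graphs, and the exact conjectured bound $s(G)\le n/\delta+C$ only when $\delta\ge n^{1/(1+\epsilon)}$ for some fixed $\epsilon\in(0,0.25)$. Your proposal is candid about not reaching the full conjecture either --- you correctly flag the sparse regime as the obstruction --- so neither your sketch nor the paper constitutes a proof of the conjecture as stated.

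That said, your proposed mechanism differs substantially from the paper's, and in a way that makes even your intermediate target weaker than what the paper actually attains. You propose prescribing explicit targets $\tau(v)$, approximating them via a random weighting plus the Lov\'asz Local Lemma, and then cancelling residuals through edge-disjoint transfer paths; you concede this collapses when $d=O(\log n)$ because the Azuma deviation $\sqrt{d\log n}$ exhausts the correction budget. The paper instead randomly partitions $V(G)$ into a big set $B$ and a small reservoir $S$ (of expected size roughly $n\delta^{-1/2+\epsilon+\alpha}$), assigns weights so that vertices in $B$ land sparsely in predetermined intervals, and uses the $B$--$S$ edges as the correction channel to make the $B$-weights distinct; the vertices in $S$ and a tiny set of ``bad'' vertices are then handled by reserving residue classes modulo a parameter $k$. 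Two contrasts matter. First, the paper deliberately \emph{abandons} LLL --- it remarks that degree heterogeneity obstructs a direct application --- and instead tolerates an exponentially small bad set to be repaired at the end; this is precisely what lets the argument descend below any poly-logarithmic threshold on $d$, whereas your LLL-based scheme is stuck at $d\ge(\log n)^{1+\epsilon}$. Second, the additive constant in Theorem~\ref{thm:main1} arises from a concrete parameter $a'=\lceil 7n/(\delta k)\rceil$, which becomes $O(1)$ exactly when $\delta^{1+\epsilon}\ge n$; your sketch isolates no analogous quantity, so even in the dense regime it is unclear where an absolute $C$ would emerge rather than an overshoot of order $\sqrt{(\log n)/d}\cdot(n/d)$.
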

It is moreover believed that the following natural extension of the conjecture holds 
in general. 
\begin{conjecture}[Generalized Faudree-Lehel Conjecture\cite{FL}]\label{conj:main2}
There is a constant $C>0$ such that for all  graphs $G$ on $n$ vertices with minimum degree 
$\dd\geq 1$ and without isolated edges, 
\[s(G) \leq \frac{n}{\dd}+C.\] 
\end{conjecture}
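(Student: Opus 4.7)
The plan is to derive $s(G)\le n/\delta+C$ for an absolute constant $C$ by combining the multiplicative bound $s(G)\le \frac{n}{\delta}(1+c_1\delta^{-\varepsilon})+c_2$ stated in the abstract with a refined analysis for the regime in which that bound does not already collapse to the additive form. Observe that the residual overhead $c_1 n/\delta^{1+\varepsilon}$ is absorbed into an absolute constant as soon as $\delta\ge (c_1 n/C)^{1/(1+\varepsilon)}$, so for any fixed $\varepsilon\in(0,1/4)$ the conjecture follows in the regime $\delta\gtrsim n^{1/(1+\varepsilon)}$; in particular the dense regime $\delta\ge n^{0.8}$ is dispatched immediately. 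The outstanding case is the intermediate-to-sparse band $\delta\le n^{1/(1+\varepsilon)}$.

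In that regime I would run a two-stage construction with palette $\{1,2,\dots,\lceil n/\delta\rceil+C\}$. Stage~1 (random main assignment): draw $w_0(e)\in\{1,\dots,\lceil n/\delta\rceil\}$ independently and uniformly for each edge $e$, and analyse the weighted degrees $\fv(v)$ via Hoeffding or Talagrand concentration around the means $d(v)(\lceil n/\delta\rceil+1)/2$. Partition $V(G)$ into buckets $B_1,B_2,\dots$ of vertices with similar degrees so that expected weighted degrees of different buckets are far enough apart for inter-bucket coincidences to be ruled out almost surely. Within each bucket apply the Lov\'asz Local Lemma to the hypergraph whose bad events are the coincidences $\fv(u)=\fv(v)$, in order to obtain an outcome in which every vertex is involved in at most an absolute constant $r=r(C)$ of coincidences.

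Stage~2 (bounded correction): eliminate the remaining coincidences using only the reserved extra weights $\{\lceil n/\delta\rceil+1,\dots,\lceil n/\delta\rceil+C\}$. At each vertex $v$, reserve during Stage~1 a small committee of incident edges to serve as ``swap-ready''. Processing buckets greedily, for each residual coincidence $(u,v)$ I would lift the weight of a committee edge of $u$ by at most $C$, and then verify that the lift neither creates a cross-bucket collision (excluded by the wide inter-bucket gaps) nor propagates back to $u$'s neighbours more than a bounded number of times. This bounded-propagation property must be arranged by either a discharging argument or a random orientation of the ``coincidence graph'' whose out-degrees are controlled by the LLL-derived bound $r=O(1)$.

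The main obstacle is precisely Stage~2: ensuring that correcting one collision does not resurrect another, uniformly in $n$ and $\delta$. Every previous approach has paid for this step either by letting the additive slack scale linearly with $n/\delta$ (the $6n/\delta+6$ bound of Kalkowski, Karo\'nski and Pfender) or multiplicatively with $\delta^{-\varepsilon}$ (the asymptotic theorem of this paper). A genuine absolute-constant correction seems to require either an entropy-compression rewriting of the LLL stage that yields residual conflicts of bounded total number per neighbourhood, or an inductive two-layer reservation in which a linear-sized subset of edges is set aside and tuned deterministically in a final sweep using only the $C$ extra weights. Turning either device into an honest $O(1)$ correction palette, independent of $n$ and $\delta$, is the essential gap separating this paper's asymptotic bound from Conjecture~\ref{conj:main2} itself.
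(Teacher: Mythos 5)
The statement you were asked to prove is labelled a \emph{conjecture} in the paper, not a theorem: the Generalized Faudree--Lehel Conjecture is explicitly stated to remain open, and the paper has no proof of it to compare your attempt against. What the paper actually proves are Theorem~\ref{thm:main2} (the asymptotic bound $s(G)\le(n/\delta)(1+C_1/\delta^{\varepsilon})+C_2$) and Theorem~\ref{thm:main1} (the absolute-constant bound $s(G)\le n/\delta+C$ under the restriction $\delta^{1+\varepsilon}\ge n$, i.e.\ $\delta\ge n^{0.8}$). Your proposal correctly notices that these two theorems together dispose of the conjecture for $\delta\ge n^{1/(1+\varepsilon)}$ and that the remaining difficulty is the sparser regime, and you are honest that your Stage~2 does not close that gap. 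So you have correctly identified the state of the problem; you have not produced a proof, and neither does the paper claim to.

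Beyond that bookkeeping point, two substantive remarks. First, your Stage~1/Stage~2 sketch is not the paper's approach even for the bounds it does prove: the paper does not use i.i.d.\ uniform edge weights followed by Hoeffding/Talagrand and a within-bucket Lov\'asz Local Lemma. Instead it randomly assigns each \emph{vertex} to one of $\delta$ bins, uses a deterministic edge-weighting that depends on bin membership so that the expected weighted degree of a vertex is essentially a monotone function of its bin index, splits $V(G)$ into a big set $B$ and a small set $S$, differentiates $B$ by small cascading shifts drawn from $E(B,S)$ (Lemma~\ref{lem:integershift}), and finally cleans up $S$ and all bad vertices modulo a large integer $k$. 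Second, and more importantly for your Stage~1, the paper explicitly flags that a direct application of the Local Lemma does not work in the general (non-regular) setting precisely because the maximum and minimum degree can differ by an arbitrary factor, which makes the dependency structure uncontrollable; your plan to ``apply LLL within buckets'' would have to confront the same obstacle, and nothing in your sketch explains how it would. So the technical core of your proposed Stage~1 is a device the authors considered and rejected for this setting, and your Stage~2 is, as you say yourself, where the actual open problem lives. As a description of what is known and what is missing the write-up is accurate; as a proof of the conjecture it is not one, and it should not be presented as a proof proposal for Conjecture~\ref{conj:main2} without making that gap the headline rather than the conclusion.
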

It is this conjecture that ``energized the study of the irregularity strength'', as stated in~\cite{CL}, and settled foundations for entire discipline, as it provided an inspiration for
many related papers, concepts and intriguing questions, see e.g.~\cite{ Louigi30,Louigi2,AT,Amar,AKP,Baca,BarGrNiw,BMT-irregular,Bohman_Kravitz,CaoLu,CL,Dinitz,Faudree2,Faudree,Ferrara,FGKP,Gyarfas,KKP,KalKarPf_123,123KLT,L-survey,MP,Majerski_Przybylo,
Pasy,Przybylo_asym_optim,LocalIrreg_2,
Plinear1,Plinear2,1234Reg123,ThoWuZha,
WongZhu23Choos, AW}. 
The conjecture 
remains open after more than three decades since it was formulated. 
A significant step forward regarding it was achieved in 2002 by Frieze, Gould, Karo\'nski, and Pfender~\cite{FGKP}, who
used the probabilistic method to prove the first linear in $n/d$ bound $s(G) \leq 48(n/d)+1$ for $d \leq \sqrt{n}$, and a superlinear bound $s(G) \leq 240(\log n)(n/d)+1$ when $d > \integer{\sqrt{n}}$. Similar bounds for general graphs, with $d$ replaced by the minimum degree $\delta$, were also proved in the same paper. These in particular imply that 
$s(G) = O(n/\delta)$ if $G$ has maximum degree $\Delta \leq n^{1/2}$. The linear bounds in $n/d$ and $n/\delta$ 
were further extended to the cases when $d \geq 10^{4/3}n^{2/3}\log^{1/3}n$ and $\delta \geq 10n^{3/4}\log^{1/4}n$, respectively, by  Cuckler and Lazebnik~\cite{CL}. The first linear bounds in both $n/d$ and $n/\delta$ for all ranges of 
$n$, $d$ and $\delta$ were settled by Przyby{\l}o, who used a different idea to 
improve a key combinatorial lemma in~\cite{FGKP}, 
thus proving~\cite{Plinear1, Plinear2} that $s(G) \leq 16(n/d)+6$ and, resp., $s(G) \leq 112(n/\delta)+28$. 
Since then, 
considerable efforts were devoted to improve the multiplicative constant in front of $n/d$ and $n/\delta$.
In course of work over this and several related concepts a list of inventive and highly useful 
algorithms were developed in particular in~\cite{AKP, Kthesis, KKP, KalKarPf_123, MP}.
These assured important breakthroughs concerning $s(G)$ and other widely studied graph invariants. 
The best result among these is due to Kalkowski, Karo{\'n}ski and Pfender~\cite{KKP}, who proved that in general  $s(G) \leq 6 \ceiling{n/\delta}$ (what was later improved to $s(G) \leq (4+o(1))(n/\delta)+4$ for a narrower range of 
$\delta \geq \sqrt{n}\log n$ 
in~\cite{MP}).
It was only until recently when Przyby{\l}o~\cite{Pasy} proposed an algorithm 
which significantly improved the previous upper bounds for $d$-regular graphs, implying
that 
Conjecture~\ref{conj:main} holds asymptotically (in terms of $d$ and $n$)  for $d$ not in extreme values. 
\begin{theorem}[Przyby{\l}o~\cite{Pasy}]\label{thm:previous}
Given any fixed 
$\epsilon>0$, for every $d$-regular graph $G$ with $n$ vertices and $d\in [\ln^{1+\epsilon} n, n/\ln^{\epsilon}n]$, if $n$ is sufficiently large,
\[
s(G) \leq \frac{n}{d}\left(1+ \frac{1}{\ln^{\epsilon/19}n}\right).
\]
\end{theorem}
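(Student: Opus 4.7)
My plan is to prove Theorem~\ref{thm:previous} via a two-phase randomised algorithm: a probabilistic initialisation that concentrates every vertex's weighted degree near a prescribed target, followed by a deterministic correction phase that uses a small reserved slice of the weight budget to push each vertex onto a distinct target exactly. This sits in the spirit of the Kalkowski--Karo\'nski--Pfender framework, but with a sharper concentration analysis that drives the multiplicative constant down to $1+o(1)$.

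Set $K=\lceil(n/d)(1+1/\ln^{\epsilon/19}n)\rceil$, and split the weight range into a bulk block $\{1,\ldots,K_1\}$ of size $K_1\approx n/d$ and a correction block $\{K_1+1,\ldots,K\}$ of size $K_2\approx (n/d)/\ln^{\epsilon/19}n$. Fix a uniformly random ordering $v_1,\ldots,v_n$ of the vertices and assign each edge a bulk weight chosen independently and uniformly from $\{1,\ldots,K_1\}$. By Hoeffding's inequality applied at each vertex, together with a union bound, with probability $1-o(1)$ the partial weighted degree $\fv_0(v)$ satisfies $|\fv_0(v)-d(K_1+1)/2|\le CK_1\sqrt{d\ln n}$ simultaneously for every $v\in V(G)$. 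Reindex so that $\fv_0(v_1)\le\fv_0(v_2)\le\cdots\le\fv_0(v_n)$ and assign the $i$-th target $t_i=t_1+(i-1)$ from an arithmetic progression centred inside $[d,Kd]$; the required correction $|t_i-\fv_0(v_i)|$ is then $O(K_1\sqrt{d\ln n})$ uniformly in $i$.

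The correction phase is the technical heart of the proof. I would process $v_1,\ldots,v_n$ in order and, when handling $v_i$, use only the edges from $v_i$ to its forward neighbours $v_j$ (with $j>i$), redistributing additive increments from $\{0,\ldots,K_2\}$ on those edges so that $\fv(v_i)$ lands exactly on $t_i$. The feasibility of this step demands that the per-vertex slack $d_i^+\cdot K_2$, where $d_i^+$ is the forward degree of $v_i$, dominates the shift $O(K_1\sqrt{d\ln n})$ that must be absorbed. Random ordering guarantees $d_i^+\ge d/3$ for all but a negligible tail of vertices, so the inequality reduces to $n/\ln^{\epsilon/19}n\gtrsim n\sqrt{\ln n/d}$, i.e.\ $d\gtrsim \ln^{1+2\epsilon/19}n$, amply guaranteed by the hypothesis $d\ge\ln^{1+\epsilon}n$; the exponent $\epsilon/19$ is a clean buffer that also absorbs secondary logarithmic losses from the union bound.

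The main obstacle I anticipate is that adjusting the edge $(v_i,v_j)$ also perturbs the weighted degree of $v_j$, which has not yet been processed, so by the time $v_j$ is reached its starting degree may have drifted by as much as $d_j^-\cdot K_2$ from the concentrated value guaranteed by Hoeffding. Controlling this drift requires either randomising the correction increments so they concentrate around a predictable mean (and compensating the targets accordingly), or a two-pass scheme in which the first pass pre-compensates the expected drift. A further subtlety is the small tail of vertices $v_i$ with small forward degree, which must be corrected from a residual backward-edge budget reserved in advance; showing that this budget suffices (for only $O(\log n)$ problematic vertices) is a delicate deterministic argument. Combining these ingredients with the union bound gives the claimed bound on $s(G)$ throughout the stated range of $d$.
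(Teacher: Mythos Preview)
The paper does not prove Theorem~\ref{thm:previous}. This theorem is quoted as a prior result of Przyby{\l}o from~\cite{Pasy}; the present paper only uses it as a benchmark to compare against the new Theorems~\ref{thm:main2} and~\ref{thm:main1}. There is therefore no ``paper's own proof'' to compare your proposal to.

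On the proposal itself, the overall architecture (random initialisation to concentrate weighted degrees, followed by a sequential correction using a small reserved weight budget) is broadly in the spirit of~\cite{Pasy} and of the present paper. However, as written it has genuine gaps. The most concrete one is the claim that ``random ordering guarantees $d_i^+\ge d/3$ for all but a negligible tail of vertices.'' In a uniformly random ordering of a $d$-regular graph, the forward degree of $v_i$ has mean $d(n-i)/(n-1)$, so roughly the last $n/3$ vertices, not $O(\log n)$ of them, will have $d_i^+<d/3$. Your suggested fix of correcting these from a ``residual backward-edge budget'' is exactly where the difficulty lies: touching a backward edge alters a vertex whose weight you have already frozen, and you give no mechanism (such as the two-element target sets used in the Kalkowski--Karo\'nski--Pfender algorithm and in Steps~C-1/C-2 of this paper) that would allow such a modification without destroying the earlier equality. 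Relatedly, the drift problem you flag is the real crux; acknowledging that it ``requires either randomising the correction increments \ldots\ or a two-pass scheme'' is not a proof, and making either of these work while keeping the total extra budget at the $(n/d)/\ln^{\epsilon/19}n$ level is precisely the content of~\cite{Pasy}. Finally, your argument never uses the upper bound $d\le n/\ln^{\epsilon}n$, which in the original is needed; its absence here is a symptom that the correction phase has not been fully worked out.
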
 

In \cite{Pasy} Przyby{\l}o mentioned that ``a poly-logarithmic in $n$ lower bound on $d$ is unfortunately unavoidable" within his approach. In this paper, we first extend the range of $d$ to bypass the poly-logarithmic in $n$ lower bound (and the upper bound too), providing at the same time a stronger upper bound on $s(G)$ for all $1\leq d \leq n-1$.

In the case of general graphs with minimum degree $\delta$ (Conjecture \ref{conj:main2})  instead of regular graphs with degree $d$, obtaining a good bound on $s(G)$ is considerably harder. Existing methods used to yield a bound for regular graphs either stop working for general graphs, or would result in a much worse bound. 
Prior to our result, no asymptotically sharp bound on $s(G)$ has been shown for general graphs with minimum degree $\delta$. One exception is the family of random graphs $G(n,p)$ where $p$ is any fixed constant in $(0,1)$, where the first author showed $s(G) \leq \lceil n/\delta \rceil +2$ almost surely \cite{Prandom}.
In this paper, we are able to  show
 that the generalized Faudree-Lehel Conjecture (Conjecture~\ref{conj:main2}) also holds asymptotically.

\begin{theorem}\label{thm:main2}
For every $\epsilon\in (0,0.25)$, there are absolute constants $C_1, C_2$ such that for each graph $G$ with $n$ vertices and minimum degree $\dd>0$ which does not contain isolated edges, 
\[s(G) \leq \frac{n}{\delta}\left(1+ \frac{C_1}{\delta^{\epsilon}}\right)+C_2.\]
\end{theorem}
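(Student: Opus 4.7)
The plan is to extend the algorithmic approach of Theorem~\ref{thm:previous} from $d$-regular graphs to graphs with arbitrary minimum degree $\dd$. In the regular case, one assigns each vertex $v$ a target weighted degree $\tau_v$ spaced by a small gap, performs a randomized initial edge-weighting so that $\fv(v)\approx \tau_v$ holds simultaneously with positive probability, and then runs a deterministic correction enforcing $\fv(v)=\tau_v$ exactly. The fundamental new difficulty in the irregular setting is that vertex degrees vary widely, so both the concentration of $\fv(v)$ around its mean and the cost of a correction at $v$ scale with $d(v)$; both the target placement and the correction algorithm must adapt.

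My first step is to partition $V(G)$ into dyadic degree classes $V_j := \{v : 2^j \dd \leq d(v) < 2^{j+1}\dd\}$ for $j=0,1,\ldots,L$ with $L=O(\log(n/\dd))$. For each class $V_j$ I reserve a target interval $I_j$ of weighted degrees, requiring the $I_j$ to be pairwise disjoint; within $I_j$ I place targets $\tau_v$ for $v\in V_j$ with a spacing $g_j$ slightly larger than the typical concentration error $\sqrt{d(v)\log n}$ expected in the next step. A careful packing of the $I_j$ into the range of achievable weighted degrees yields a feasibility condition of the form $k\leq \frac{n}{\dd}\bigl(1+O(\dd^{-\epsilon})\bigr)+O(1)$, matching the theorem; the regime $\dd=O(\log^c n)$ is absorbed into the constants via the existing $s(G)=O(n/\dd)$ bound of~\cite{KKP}.

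Next, I would define the random initial edge-weighting by choosing, for each edge $\{u,v\}$, a distribution on $\{1,\ldots,k\}$ whose mean realizes $\mathbb{E}[\fv(w)]=\tau_w$ for $w\in\{u,v\}$; feasibility of this linear system is the reason for the particular choice of targets $\tau_v$ in the first step. Chernoff or Azuma--Hoeffding bounds then give $|\fv(v)-\tau_v|=O(\sqrt{d(v)\log n})$ simultaneously for all $v$ with positive probability. The spacing $g_j$ is tuned to exceed this concentration error by enough to accommodate the correction slack used in the next step.

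The final step is a deterministic correction modeled on the algorithms of~\cite{KKP, Pasy}. I process vertices in a specific order---essentially from smallest to largest degree class---and at each $v$ use a small reserved set of incident edges to shift $\fv(v)$ onto $\tau_v$ exactly. The main obstacle, and the reason the regular-graph proofs do not transfer directly, is controlling interference across degree classes: a correction step at a heavy vertex $u\in V_j$ can simultaneously perturb the weighted degrees of many already-fixed light neighbours $v\in V_0$, whose target gap $g_0$ is comparatively tiny. The remedy is to cap the number of correction edges any heavy vertex is responsible for, via an amortization over its neighbourhood, and to allocate within each $I_0$ a margin large enough to absorb all cross-class perturbations a light vertex can suffer in total. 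Verifying this budget in the adversarial case, where many correction demands converge simultaneously at a single heavy vertex, will be the technically delicate heart of the proof.
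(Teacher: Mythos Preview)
Your approach diverges substantially from the paper's, which does \emph{not} partition by degree. Instead the paper places vertices randomly into $\delta$ bins via i.i.d.\ uniform variables, takes $S$ to be the union of the last $s^*\approx\delta^{1/2+\epsilon+\alpha}$ bins and $B=V\setminus S$, and runs three steps: (A) an initial weighting $f_1$ depending only on bin membership, so that each $v\in B$ has weight concentrated near a computable $\sigma_\mu(v,Z(v))$; (B) corrections using \emph{only} edges in $E(B,S)$ to make the weights in $B$ (minus a tiny ``bad'' set) pairwise distinct and $\not\equiv 0,\ldots,5\pmod k$; (C) a separate algorithm on $S\cup(\text{bad})$ that assigns weights in the reserved residues $0,\ldots,5\pmod k$. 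Varying degrees are absorbed not by dyadic classes but by a combinatorial fact (Claim~\ref{claim:smvi}): for each $v$ and each target interval $I_h$, at most one bin index $i$ can place $\sigma_\mu(v,i)$ in $I_h$, so $\mu_h\le n/\delta$ uniformly in the degree sequence. The cross-degree interference you flag never arises, because Step~B touches only edges into $S$, and $S$ is finalised afterwards in a residue class disjoint from that of $B$.

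Your outline has a genuine gap exactly at its self-identified crux. ``Cap the number of correction edges any heavy vertex is responsible for'' together with ``allocate a margin'' is not a mechanism, and the obvious instantiations fail. Concretely: suppose you process light vertices first and later correct a heavy $u$ with $d(u)$ large. The correction at $u$ must shift $\fv(u)$ by up to $\Theta(\sqrt{d(u)\log n})$, which forces you to change several edges out of $u$, each by an amount up to $k$; but any such edge may hit an already-fixed light neighbour $v\in V_0$, moving $\fv(v)$ by up to $k\approx n/\delta$, which dwarfs the gap $g_0\approx\sqrt{\delta\log n}$ whenever $\delta$ is small. Processing heavy vertices first reverses the problem symmetrically. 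Routing all corrections through a small reserved edge set does not obviously help either, since a heavy vertex can have essentially all its neighbours in $V_0$. The paper sidesteps this entirely via the $B/S$ split (every non-bad vertex has $\ge s^*/2$ neighbours in the random set $S$, so all Step~B corrections route through $S$) combined with the residue-class separation guaranteeing that later work on $S$ cannot collide with $B$. Absent a comparable structural device, it is not clear your dyadic scheme can be made to close. A secondary gap is the asserted ``feasibility of this linear system'' for the random weighting: one mean parameter per edge must satisfy one target equation per vertex with the means constrained to $[1,k]$, and compatibility of this constrained system with your disjoint-interval packing of the $\tau_v$ is not argued.
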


Note that Theorem~\ref{thm:main2} in particular implies (for $\varepsilon=0.2$) that $s(G)\leq (n/\delta)(1+(C_1/\delta^{0.2}))+C_2$ for some absolute constants $C_1,C_2$.
For any fixed $\varepsilon>0$ (in Theorem~\ref{thm:previous}) and $\delta\in [\ln^{1+\epsilon} n, n/\ln^{\epsilon}n]$, we however have: 
$(n/\delta)(1/\ln^{\varepsilon/19}n) \gg (n/\delta)(C_1/\ln^{0.2+0.2\varepsilon}n) \geq (n/\delta)(C_1/\delta^{0.2})$
and $(n/\delta)(1/\ln^{\varepsilon/19}n) \geq (\ln^\varepsilon n) (1/\ln^{\varepsilon/19}n) \gg C_2$,
and thus our bound is in particular a direct improvement over the bound in Theorem \ref{thm:previous} (which additionally regards only the case of regular graphs).
 

Moreover, as the second contribution of the paper, which seems even more or equally vital as the one above, 
we also 
confirm that the Faudree-Lehel Conjectures (Conjectures \ref{conj:main} and \ref{conj:main2})
hold, not only asymptotically, for 
all graphs with $\delta\geq n^\beta$ where $\beta$ is any fixed constant greater than $0.8$.

\begin{theorem}\label{thm:main1}
For every $\epsilon \in (0, 0.25)$, there is an absolute constant $C$ such that for each graph $G$ with $n$ vertices and minimum degree $\delta$ where $\delta^{1+\epsilon} \geq n$ (i.e., $\delta\geq n^{1/(1+\epsilon)}$),   \[s(G) \leq \frac{n}{\delta} + C.\]
\end{theorem}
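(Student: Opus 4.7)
The plan is to deduce Theorem~\ref{thm:main1} as an immediate corollary of Theorem~\ref{thm:main2}, since the hypothesis $\delta^{1+\epsilon}\geq n$ is precisely calibrated to absorb the relative error term produced by Theorem~\ref{thm:main2} into an additive constant. Fix $\epsilon\in(0,0.25)$. First I would dispose of the degenerate case $\delta=1$: then $\delta^{1+\epsilon}=1\geq n$ forces $n=1$, for which $s(G)=0$ trivially. So I may assume $\delta\geq 2$, which in particular guarantees that $G$ contains no isolated edge (both endpoints of an isolated edge have degree one), and hence Theorem~\ref{thm:main2} applies.

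Applying Theorem~\ref{thm:main2} with the same $\epsilon$ yields absolute constants $C_1,C_2$ with
\[
s(G)\;\leq\;\frac{n}{\delta}\left(1+\frac{C_1}{\delta^{\epsilon}}\right)+C_2\;=\;\frac{n}{\delta}+\frac{C_1\,n}{\delta^{1+\epsilon}}+C_2.
\]
The hypothesis $\delta^{1+\epsilon}\geq n$ is equivalent to $n/\delta^{1+\epsilon}\leq 1$, so the middle summand is bounded by $C_1$. Consequently $s(G)\leq n/\delta+(C_1+C_2)$, and setting $C:=C_1+C_2$ finishes the argument.

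No further combinatorial ingredient is required beyond Theorem~\ref{thm:main2}; the entire difficulty of the present statement is concentrated there. Indeed, Theorem~\ref{thm:main2} establishes an $(n/\delta)(1+o(1))$ bound in which the $o(1)$ factor decays polynomially in $\delta$, and Theorem~\ref{thm:main1} just identifies the dense regime $\delta\geq n^{1/(1+\epsilon)}$ as the threshold at which this polynomial decay drops below a universal constant, thereby matching the classical Faudree--Lehel bound $n/\delta+C$. The real obstacle --- the random weighting combined with a local correction scheme that shaves the general upper bound $s(G)=O(n/\delta)$ down to $(n/\delta)(1+o(1))$ across all degree regimes --- lies wholly in the proof of Theorem~\ref{thm:main2}, and is inherited here without modification.
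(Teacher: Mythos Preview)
Your proposal is correct and follows essentially the same route as the paper: deduce Theorem~\ref{thm:main1} directly from Theorem~\ref{thm:main2} by observing that the hypothesis $\delta^{1+\epsilon}\geq n$ bounds the term $C_1 n/\delta^{1+\epsilon}$ by $C_1$, so that $C:=C_1+C_2$ works. Your explicit handling of the degenerate case $\delta=1$ and the isolated-edge hypothesis is a welcome bit of extra care that the paper leaves implicit.
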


\section{Tools and Notation}
We will use the following tools.
\begin{lemma}[Chernoff Bound c.f., e.g., 
\cite{ASbook}, Appendix A]\label{bound:chernoff}
Let $X_1, \dots, X_n$ be i.i.d.~random variables such that $\Pr(X_i=1) = p$ and $\Pr(X_i=0)=1-p$. Then for any $t \geq 0$,
\begin{align*}
\Pr\left(\left|\sum_{i=1}^n X_i - np\right| > t\right) \leq 2e^{-t^2/(3np)}, \ \ & \text{ if } 0 \leq t \leq np, \\
\Pr\left(\left|\sum_{i=1}^n X_i - np\right| > t\right) \leq 2e^{-t/3}, \ \ & \text{ if } t > np.
\end{align*} 
\end{lemma}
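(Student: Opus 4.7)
The plan is to apply the standard Chernoff / moment-generating function (MGF) argument, treating the upper tail $\Pr(S_n \geq \mu+t)$ and lower tail $\Pr(S_n \leq \mu-t)$ separately (where $S_n := \sum_{i=1}^n X_i$ and $\mu := np$) and then combining them by a union bound to produce the factor of $2$ on the right-hand side of each inequality. The natural split between the $t\leq np$ and $t>np$ regimes reflects the transition between sub-Gaussian and sub-exponential behavior of binomial tails.

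For the upper tail, I would first compute the MGF of each Bernoulli variable: for any $\lambda>0$, $\mathbb{E}[e^{\lambda X_i}] = 1 + p(e^\lambda-1) \leq \exp(p(e^\lambda-1))$ via the routine inequality $1+y \leq e^y$. Independence then gives $\mathbb{E}[e^{\lambda S_n}] \leq \exp(\mu(e^\lambda-1))$, and Markov's inequality applied to $e^{\lambda S_n}$ yields $\Pr(S_n \geq \mu+t) \leq \exp(\mu(e^\lambda-1) - \lambda(\mu+t))$. Optimizing over $\lambda$ (the minimum occurs at $\lambda^* = \ln(1 + t/\mu)$) produces the classical entropy-form bound
\[
\Pr(S_n \geq \mu+t) \leq \exp(-\mu\, H(t/\mu)), \qquad H(x) := (1+x)\ln(1+x) - x.
\]
A symmetric calculation with negative $\lambda$ gives the lower-tail analogue $\Pr(S_n \leq \mu-t) \leq \exp(-\mu\, H(-t/\mu))$, valid for $0 \leq t \leq \mu$; the regime $t>\mu$ is vacuous for the lower tail since $S_n \geq 0$ forces $\Pr(S_n \leq \mu-t)=0$.

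The final step is to translate these entropy-form bounds into the two piecewise forms stated in the lemma using elementary lower bounds on $H$. In the range $0 \leq t \leq \mu$ (so $x := t/\mu \in [0,1]$), Bennett's inequality $H(x) \geq x^2/(2 + 2x/3)$ combined with $2+2x/3 \leq 8/3 \leq 3$ gives $\mu H(t/\mu) \geq t^2/(2\mu + 2t/3) \geq t^2/(3\mu)$ for the upper tail; the lower tail satisfies the even stronger bound $H(-x) \geq x^2/2$, so both tails are dominated by $\exp(-t^2/(3\mu))$, and the union bound supplies the factor $2$. In the range $t > \mu$ only the upper tail contributes, and one checks by a short calculus argument that $H(x) \geq x/3$ on $[1,\infty)$: indeed $H(1) = 2\ln 2 - 1 > 1/3$ and $H'(x) = \ln(1+x) \geq \ln 2 > 1/3$ for $x\geq 1$, so $H(x)-x/3$ is nonnegative and nondecreasing on $[1,\infty)$. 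This yields $\mu H(t/\mu) \geq t/3$ and hence the claimed $2e^{-t/3}$.

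The main obstacle is nothing conceptual, since the argument is entirely textbook; rather, the only care required lies in the bookkeeping that turns the sharp entropy bound $\exp(-\mu H(t/\mu))$ into the two clean forms with the exact constants $1/(3np)$ and $1/3$. The slight asymmetry between the tails (the lower tail being trivially concentrated because $S_n\geq 0$) must be handled explicitly to justify the uniform factor $2$ in both regimes.
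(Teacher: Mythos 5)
Your proposal is correct. Note, however, that the paper does not supply any proof of this lemma at all: it is invoked as a citation (to Alon--Spencer, Appendix~A), so there is no ``paper's own proof'' to compare against. Your derivation is the standard moment-generating-function argument: bound $\ee[e^{\lambda S_n}]\leq\exp(\mu(e^\lambda-1))$ via $1+y\leq e^y$, apply Markov's inequality, optimize in $\lambda$, and then pass from the entropy bound $\exp(-\mu H(t/\mu))$ with $H(x)=(1+x)\ln(1+x)-x$ to the two clean piecewise forms. The elementary estimates you invoke all check out: $H(x)\geq x^2/(2+2x/3)\geq x^2/3$ on $[0,1]$, $H(-x)\geq x^2/2$ on $[0,1]$ for the lower tail, $H(x)\geq x/3$ on $[1,\infty)$ (since $H(1)=2\ln 2-1>1/3$ and $H'(x)=\ln(1+x)\geq\ln 2>1/3$), and the observation that the lower tail is identically zero once $t>\mu$, so the factor $2$ in that regime costs nothing. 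This is exactly the argument found in the cited reference, executed with sufficient care in the constant-chasing.
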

\begin{corollary}[Chernoff Bound c.f., e.g., 
\cite{ASbook}, Appendix A]\label{bound:chernoff2}
Let $X_1, \dots, X_n$ be i.i.d.~random variables such that $\Pr(X_i=1) = p$ and $\Pr(X_i=0)=1-p$. Then for any $t \geq 0$,
\begin{align*}
\Pr\left(\left|\sum_{i=1}^n X_i - np\right| > t\right) \leq 2e^{-t^2/3\max(np, t)}.
\end{align*} 
\end{corollary}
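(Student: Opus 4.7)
The statement is merely a unified reformulation of the two-regime Chernoff bound already established in Lemma~\ref{bound:chernoff}, so my plan is to dispose of it by a direct case split on the size of $t$ relative to $np$, verifying in each regime that the consolidated exponent $-t^2/(3\max(np, t))$ recovers the corresponding bound from the lemma.

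In the first case $0 \le t \le np$, one has $\max(np, t) = np$, so the right-hand side of the corollary becomes $2 e^{-t^2/(3np)}$, which is exactly the first inequality of Lemma~\ref{bound:chernoff}. In the complementary case $t > np$, one has $\max(np, t) = t$, so the right-hand side becomes $2e^{-t^2/(3t)} = 2e^{-t/3}$, which matches the second inequality of the lemma. Combining the two regimes yields the claimed bound uniformly for all $t \ge 0$.

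There is essentially no obstacle: the corollary is purely a notational convenience that fuses the two Chernoff regimes into a single formula, so that downstream applications in the paper can avoid repeated case analysis each time the deviation bound is invoked. The only subtlety worth flagging is that at the boundary $t = np$ the two regimes give identical exponents, so the case split is well-posed and either bound from Lemma~\ref{bound:chernoff} may be applied there.
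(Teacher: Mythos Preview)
Your proof is correct: the corollary is indeed just the two cases of Lemma~\ref{bound:chernoff} fused into a single formula via $\max(np,t)$, and your case split on $t\le np$ versus $t>np$ recovers each regime exactly. The paper does not supply its own proof of this corollary (it is simply cited from the literature as a standard fact), so there is nothing further to compare; your derivation is the natural one-line justification.
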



\begin{definition}[Negative Association \cite{DR}]
A set of random variables $X_1, \dots,X_n$ are {\it negatively associated} if for any two disjoint index sets $I, J \subset [n]$ and two  monotone increasing functions $f,g: \mathbb{R} \to \mathbb{R}$, $\ee[f(X_i, i\in I) g(X_j, j\in J)] \leq  \ee[f(X_i, i\in I)] \ee[ g(X_j, j\in J)].$
\end{definition}

\begin{lemma}[Zero-one Prinicple \cite{DR}]\label{lem:NAzero-one}
Let $X_1, \dots,X_n$ be zero-one random variables such that always $\sum_i X_i \leq 1$. Then $X_1, \dots, X_n$ are negatively associated.
\end{lemma}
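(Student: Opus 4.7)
The plan is to prove the lemma by direct computation, exploiting the structural constraint $\sum_i X_i \leq 1$: the joint distribution of $(X_1,\dots,X_n)$ is supported on $\{\mathbf{0}, \mathbf{e}_1, \dots, \mathbf{e}_n\}$, so at most one coordinate can equal $1$. Write $p_i := \Pr(X_i = 1)$.

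Fix disjoint index sets $I, J \subseteq [n]$ and monotone increasing functions $f$ of $(X_i)_{i\in I}$ and $g$ of $(X_j)_{j\in J}$, interpreted as nondecreasing in each coordinate. The key observation is that whenever some $X_i = 1$ with $i\in I$, every $X_j$ with $j\in J$ is forced to be $0$, and vice versa. Abbreviate $f_0 := f(\mathbf{0}_I)$, $f_i := f(\mathbf{e}_i)$ for $i\in I$, $g_0 := g(\mathbf{0}_J)$, $g_j := g(\mathbf{e}_j)$ for $j\in J$, and set $\Delta f_i := f_i - f_0 \geq 0$, $\Delta g_j := g_j - g_0 \geq 0$, both nonnegative by monotonicity. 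With $F := f((X_i)_{i\in I})$ and $G := g((X_j)_{j\in J})$, I would carry out a four-case analysis on ``which (if any) $X_k$ is the sole $1$'': the product $FG$ equals $(f_0+\Delta f_i)g_0$ when $X_i=1$ for some $i\in I$, equals $f_0(g_0+\Delta g_j)$ when $X_j=1$ for some $j\in J$, and equals $f_0 g_0$ otherwise. A direct expansion then yields
\[
\ee[FG] = f_0 g_0 + g_0 \sum_{i\in I} p_i \Delta f_i + f_0 \sum_{j\in J} p_j \Delta g_j,
\]
whereas $\ee[F] = f_0 + \sum_{i\in I} p_i \Delta f_i$ and $\ee[G] = g_0 + \sum_{j\in J} p_j \Delta g_j$ by the analogous single-sum analysis, so
\[
\ee[F]\ee[G] - \ee[FG] = \Bigl(\sum_{i\in I} p_i \Delta f_i\Bigr)\Bigl(\sum_{j\in J} p_j \Delta g_j\Bigr) \geq 0.
\]

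There is no real obstacle here: the argument is a short four-case expansion, and the negativity of the covariance is forced precisely because a ``$1$'' landing in $I$ rules out any ``$1$'' landing in $J$. This lemma serves as the base case for the broader theory of negative association; once established, standard closure properties from \cite{DR} (coordinate projections, disjoint products, and applying increasing functions to disjoint negatively associated blocks) propagate negative association to the more intricate edge-labeling random variables needed elsewhere in this paper for the weighted-degree concentration via the Chernoff-type bounds (Lemma~\ref{bound:chernoff}, Corollary~\ref{bound:chernoff2}).
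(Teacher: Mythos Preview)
Your argument is correct: the case analysis on which (if any) coordinate equals $1$ is exactly the right move, and the identity
\[
\ee[F]\,\ee[G]-\ee[FG]=\Bigl(\sum_{i\in I}p_i\,\Delta f_i\Bigr)\Bigl(\sum_{j\in J}p_j\,\Delta g_j\Bigr)\ge 0
\]
follows cleanly from it.

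There is nothing to compare against, however: the paper does not supply its own proof of this lemma. It is quoted verbatim from \cite{DR} as a black-box tool (alongside the closure property, Lemma~\ref{lem:NAcp}, and the Chernoff bound for negatively associated variables, Lemma~\ref{lem:NAChernoff}), and is invoked only once, in the proof of Lemma~\ref{lem:yb}, to show that the indicators $\{Y_{v,i,h}\}_{i,h}$ are negatively associated for each fixed $v$. Your write-up is therefore more than the paper itself offers; the final paragraph about closure properties correctly anticipates how the lemma is actually used downstream.
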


\begin{lemma}[Closure Property \cite{DR}]\label{lem:NAcp}
Let $X_1, \dots,X_n$ be negatively associated and let $Y_1, \dots, Y_n$ be negatively associated. If $\{X_i\}_i$ are independent from $\{Y_i\}_i$, then $\{X_i\}_i \cup \{Y_i\}_i$ are negatively associated.
\end{lemma}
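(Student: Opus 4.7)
The plan is to establish negative association of the combined family by peeling off one block at a time: first condition on the $Y$-coordinates and invoke the NA property of $X_1,\dots,X_n$ inside the conditional, then invoke the NA property of $Y_1,\dots,Y_n$ to the resulting expectations. Relabel the combined collection as $Z_1,\dots,Z_{2n}$ with $Z_i=X_i$ for $i\le n$ and $Z_{n+j}=Y_j$ for $j\le n$. Fix disjoint index sets $I,J\subseteq[2n]$ and monotone increasing functions $f,g$. Split $I=I_X\cup I_Y$ and $J=J_X\cup J_Y$ in the obvious way, so that $I_X,J_X\subseteq[n]$ are disjoint subsets of the $X$-indices and $I_Y,J_Y\subseteq[n]$ are disjoint subsets of the $Y$-indices.

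First I would condition on $Y$. Because $\{X_i\}_i$ is independent of $\{Y_j\}_j$, for every realization $y$ of $Y$ the conditional law of $(X_1,\dots,X_n)$ coincides with its unconditional law, and the maps $x\mapsto f(x_{I_X},y_{I_Y})$ and $x\mapsto g(x_{J_X},y_{J_Y})$ are monotone increasing functions of the disjoint $X$-coordinate subsets $I_X$ and $J_X$. The NA hypothesis on $X_1,\dots,X_n$ therefore gives, almost surely in $Y$,
\[
\ee\!\left[f(Z_i,i\in I)\,g(Z_j,j\in J)\,\big|\,Y\right] \;\le\; \ee\!\left[f(Z_i,i\in I)\,\big|\,Y\right]\ee\!\left[g(Z_j,j\in J)\,\big|\,Y\right].
\]

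Next, define $F(y_{I_Y}):=\ee[f(X_{I_X},y_{I_Y})]$ and $G(y_{J_Y}):=\ee[g(X_{J_X},y_{J_Y})]$, where the expectations are taken over the $X$-variables only. By independence, the two conditional expectations on the right of the display above equal $F(Y_{I_Y})$ and $G(Y_{J_Y})$ respectively. Moreover, $F$ and $G$ are monotone increasing: for each fixed sample of $X$, monotonicity of $f$ in its $I_Y$-coordinates (respectively of $g$ in its $J_Y$-coordinates) is preserved under expectation. Since $I_Y$ and $J_Y$ are disjoint subsets of $[n]$, the NA hypothesis on $Y_1,\dots,Y_n$ applied to $F,G$ yields
\[
\ee\!\left[F(Y_{I_Y})\,G(Y_{J_Y})\right] \;\le\; \ee\!\left[F(Y_{I_Y})\right]\ee\!\left[G(Y_{J_Y})\right].
\]
Taking the outer expectation over $Y$ in the conditional inequality, then combining with the preceding display and the tower identities $\ee[F(Y_{I_Y})]=\ee[f(Z_i,i\in I)]$ and $\ee[G(Y_{J_Y})]=\ee[g(Z_j,j\in J)]$, delivers exactly the NA inequality for $\{Z_k\}_{k=1}^{2n}$.

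The argument is largely mechanical; the one point deserving care is verifying that after partially fixing the arguments of $f$ or $g$ the remaining function is still monotone in the free coordinates. This is immediate because the hypothesis demands monotonicity in \emph{every} argument, and monotonicity is stable both under partial freezing and under taking expectations over an independent block. I do not foresee any serious obstacle: the independence assumption is precisely what allows the two NA inequalities to be composed through conditioning without producing any cross terms that would need to be controlled.
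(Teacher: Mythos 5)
Your proof is correct and is essentially the standard argument for this closure property of negative association (the paper itself does not prove Lemma~\ref{lem:NAcp}; it is cited directly from the reference on negative dependence, where the same conditioning argument appears). The two key observations you carefully isolate---that independence of the blocks lets you apply NA of $X$ conditionally on $Y$ with the conditional law of $X$ unchanged, and that monotonicity in the $Y$-coordinates survives both partial freezing and averaging over the independent $X$-block so that $F$ and $G$ are again coordinatewise increasing on disjoint $Y$-index sets---are exactly what make the composition of the two NA inequalities through the tower property go through without cross terms.
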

\begin{lemma}[Chernoff Bound \cite{DR}]\label{lem:NAChernoff}
Let $X_1,\dots, X_n$  be negatively associated random variables such that $\Pr(X_i=1)=p$ and $\Pr(X_i=0)=1-p$. Then the two
Chernoff Bounds in Lemma~\ref{bound:chernoff} hold.
\end{lemma}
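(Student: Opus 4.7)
The plan is to observe that in the classical (independent-case) proof of Lemma~\ref{bound:chernoff}, the \emph{only} place independence is actually invoked is the factorization of the moment generating function; every other step --- Markov's inequality applied to $e^{\lambda S}$, the elementary inequality $1 + p(e^{\lambda}-1) \leq \exp(p(e^{\lambda}-1))$, and the optimization over $\lambda$ --- is purely analytic. Consequently, the entire lemma reduces to establishing, for $S := \sum_{i=1}^n X_i$ and any real $\lambda$,
\[
\ee\!\left[\prod_{i=1}^n e^{\lambda X_i}\right] \leq \prod_{i=1}^n \ee\!\left[e^{\lambda X_i}\right].
\]
The case $\lambda > 0$ handles the upper tail $\Pr(S \geq np + t)$, and the case $\lambda < 0$ (via Markov applied to $e^{\lambda S}$ with negative $\lambda$) handles the lower tail $\Pr(S \leq np - t)$. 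Combining the two tails by a union bound produces the factor $2$ in Lemma~\ref{bound:chernoff}, and the two regimes $t \leq np$ versus $t > np$ both fall out of the same optimization procedure with different choices of $|\lambda|$.

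The MGF inequality itself I would prove by induction on $n$, directly from the definition of negative association, in the more general form
\[
\ee\!\left[\prod_{i=1}^n f_i(X_i)\right] \leq \prod_{i=1}^n \ee[f_i(X_i)],
\]
valid for nonnegative $f_i$ that are \emph{all} monotone increasing (the case $\lambda > 0$) or \emph{all} monotone decreasing (the case $\lambda < 0$). For the increasing case, the base $n = 1$ is trivial; for the inductive step, take $I = \{1,\ldots,n-1\}$, $J = \{n\}$, set $F(x_I) := \prod_{i \leq n-1} f_i(x_i)$ and $g := f_n$, and note that $F$ is monotone increasing in each coordinate because the $f_i$ are nonnegative. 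Applying the NA definition to $F,g$ gives $\ee[F(X_I)\, g(X_n)] \leq \ee[F(X_I)]\,\ee[g(X_n)]$, and the inductive hypothesis --- applied to $X_1, \ldots, X_{n-1}$, which are themselves NA by restricting the definition to disjoint subsets of $\{1, \ldots, n-1\}$ --- finishes the step. For the decreasing case I would first extend the pairwise NA inequality to monotone decreasing $f, g$ by the observation that $-f, -g$ are monotone increasing, $(-f)(-g) = fg$, and $\ee[-f]\,\ee[-g] = \ee[f]\,\ee[g]$; the same induction then runs through unchanged with decreasing functions.

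The main (mild) obstacle is that the definition of negative association given in the excerpt is stated only for monotone \emph{increasing} functions, so the lower tail cannot be obtained directly and requires the brief negation trick above to produce the decreasing-function version of the NA inequality. Once both monotonicity directions are available, the remainder of the proof is a verbatim transcription of the standard independent-case Chernoff derivation, so no further probabilistic input is needed.
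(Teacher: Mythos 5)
The paper states this lemma as a cited fact from \cite{DR} and supplies no proof of its own, so there is no in-paper argument to compare against; your proposal is correct and follows the standard Dubhashi--Ranjan route. The key observation --- that independence enters the classical Chernoff derivation only through the MGF factorization, that negative association gives $\ee\bigl[\prod_i e^{\lambda X_i}\bigr] \leq \prod_i \ee\bigl[e^{\lambda X_i}\bigr]$ by induction on the NA definition (using that a subfamily of NA variables is NA and that a product of nonnegative coordinatewise-increasing functions is coordinatewise-increasing), and that the negation trick extends the NA inequality to decreasing functions for the lower tail with $\lambda<0$ --- is precisely how this result is established in the cited reference.
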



\begin{lemma}[Simple Concentration Bound \cite{MolloyReed}]\label{lem:SCB}
Let $S$ be a random variable determined by $n$ independent trials $X_1,\ldots,X_n$, and satisfying:
changing the outcome of any one trial can affect $S$ by at most $c>0$, 
then for any $t>0$, 
\begin{align*}
\Pr\left(\left|S - \ee(S)\right| > t\right) \leq 2e^{-t^2/(2c^2n)}.
\end{align*} 
\end{lemma}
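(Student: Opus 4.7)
The plan is to prove this as an instance of the bounded-differences inequality via the standard Doob-martingale / Azuma-Hoeffding route. First I would set up the Doob martingale associated with the filtration generated by the trials: let $Y_0 = \ee(S)$ and for $1\le i\le n$ let
\[
Y_i \;=\; \ee\bigl[S \given X_1,\ldots,X_i\bigr],
\]
so that $Y_n = S$ almost surely, $(Y_i)_{i=0}^n$ is a martingale, and $S - \ee(S) = \sum_{i=1}^n (Y_i - Y_{i-1})$.

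The key step is to show that each martingale increment $D_i := Y_i - Y_{i-1}$ satisfies $|D_i|\le c$ almost surely. Here I would use independence of the $X_j$'s explicitly: conditioning on $X_1,\ldots,X_{i-1}$, let $g(x)=\ee[S\given X_1,\ldots,X_{i-1},X_i=x]$. Because the trials are independent, $Y_{i-1}=\ee_{X_i'}[g(X_i')]$ where $X_i'$ has the same distribution as $X_i$ and is independent of everything else, so
\[
D_i \;=\; g(X_i) - \ee_{X_i'}[g(X_i')] \;=\; \ee_{X_i'}\bigl[g(X_i)-g(X_i')\bigr].
\]
The bounded-differences hypothesis says that for each fixed outcome of the remaining trials, changing $X_i$ from $x'$ to $x$ alters $S$ by at most $c$; averaging this pointwise bound over the independent remaining trials yields $|g(x)-g(x')|\le c$, hence $|D_i|\le c$.

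Next I would invoke the Azuma-Hoeffding inequality for martingales with bounded increments: if $|D_i|\le c$ for each $i$, then
\[
\Pr\bigl(|Y_n - Y_0| > t\bigr) \;\le\; 2\exp\!\left(-\frac{t^2}{2\sum_{i=1}^n c^2}\right) \;=\; 2\exp\!\left(-\frac{t^2}{2c^2 n}\right),
\]
which is exactly the claimed bound since $Y_n - Y_0 = S - \ee(S)$. Azuma-Hoeffding itself is proved by the usual exponential-moment trick: by convexity, $\ee[e^{\lambda D_i}\given X_1,\ldots,X_{i-1}]\le e^{\lambda^2 c^2/2}$ for any $\lambda\in\mathbb{R}$ when $|D_i|\le c$ and $\ee[D_i\given X_1,\ldots,X_{i-1}]=0$; iterating gives $\ee[e^{\lambda(S-\ee S)}]\le e^{\lambda^2 n c^2/2}$, and a Markov bound optimized at $\lambda=t/(nc^2)$ yields the stated tail on each side, whence the factor of $2$.

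The only delicate point is the bounded-increment verification, and the only place independence of the trials is actually used; once that is in hand, the rest is a direct application of a standard martingale concentration result that can simply be cited. I would therefore organize the write-up as: (i) define the Doob martingale; (ii) prove $|D_i|\le c$ using the averaging identity above; (iii) quote Azuma-Hoeffding (or sketch its one-line exponential-moment proof) to conclude.
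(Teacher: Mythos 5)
The paper does not supply a proof of this lemma; it is quoted as a black-box tool with a citation to Molloy and Reed, so there is no internal argument to compare yours against. Your Doob-martingale / Azuma--Hoeffding route is the standard proof of this bounded-differences inequality and, as written, it is correct: the averaging identity $D_i = \ee_{X_i'}\bigl[g(X_i)-g(X_i')\bigr]$ (which is exactly where independence of the trials is used) cleanly yields $|D_i|\le c$, and Azuma with increments in $[-c,c]$ gives $\Pr(|S-\ee S|>t)\le 2e^{-t^2/(2c^2n)}$, matching the claim. One small remark worth flagging: your argument actually shows more, namely that conditionally on $X_1,\ldots,X_{i-1}$ the increment $D_i$ lies in an interval of \emph{length} at most $c$ (not merely $|D_i|\le c$), so Hoeffding's lemma would give $\ee[e^{\lambda D_i}\given X_1,\ldots,X_{i-1}]\le e^{\lambda^2 c^2/8}$ and hence the sharper McDiarmid constant $2e^{-2t^2/(c^2 n)}$. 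The statement you are asked to prove records the weaker constant, so the proof as you organized it (using only $|D_i|\le c$) is exactly calibrated to it and nothing is missing.
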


Given a graph $G$, 
a set $U \subset V(G)$ and a vertex $v \in V(G)$, we use $\deg_U(v)$ to denote the number of neighbors of $v$ in $U$. We use $G[U]$ to denote the 
subgraph induced by $U$ in $G$; 
and $E(U)$ to be the set of edges of $G[U]$.

Throughout the paper we assume that the graph $G$ has $n$ vertices and minimum degree $\delta$. 
A \emph{weighted degree} of a vertex $v$ will usually be abbreviated as a \emph{weight} of $v$.

\section{General Proof Idea, Links and Obstacles}
The basic intuition behind our construction is to partition $V(G)$ into a big set $B$ 
and a small set $S$, where $|S| = (n/\dd)\cdot o(\dd)$. We first adjust the edge weights so that almost all vertices in $B$ have distinct weights, and then we locally adjust weights of the rest of the vertices to distinguish the weights of 
all vertices in $G$. 

Our argument 
can be divided into three main steps.
Step A relies on a specific random construction which assures relatively sparse distribution of weights of the vertices in $B$, i.e. without too many vertex weights in any of the predefined 
intervals partitioning 
positive integers.  
Step B consists of modifications of the weights of edges across $B$ and $S$ to generate relatively small shifts of the vertex weights in $B$, resulting in pairwise distinct weights attributed to all but a small set of bad vertices in $B$. 
(We note here that $S$ must be large enough to provide sufficiently many edges across $B$ and $S$ for our purposes.)
In step C we modify weights of the edges in $S$ and {   a small portion of the edges outside $S$} to weight distinguish the vertices in $S$ mostly. For this aim we attribute these vertices special weights deliberately unused in step B (with residues at most $5$ modulo a carefully chosen and large enough integer $k$). To distinguish weights in $S$ we 
in particular
benefit from the fact that this set 
is small compared to $B$ and thus vertices in $S$ have on average large fraction of all their incident edges in $E(S,B)$ (usually much larger than edges in $S$).
This enables taking on vital preparatory measures prior to step C (within step A) ensuring sparse vertex weight distribution in $S$ and facilitating the mentioned final cleanup in this set. 
Throughout the construction we moreover single out several types of ``bad vertices'', which do not fulfill one of a number of specified conditions and cannot be weight distinguished according to major procedures. 
The set of all of these is however small enough to be handled with in a special manner in step C.
%
%


Our approach is motivated by the random construction idea from~\cite{Pasy}, 
which amounts to 
show that under certain conditions  there are no ``bad vertices" at all (in case of regular graphs).
Then an explicit weight assignment could be provided in the face of absence of such problematic vertices. 
Typically, if the minimum degree $\delta$ is $\Omega(\log n)$, 
then a union bound could be used to prove that with positive probability there are no ``bad vertices" resulting from the random construction. To bypass the $\log n$ factor, careful quantization and Lov\'asz Local Lemma turned out to be very helpful tools 
in the case of regular graphs -- in~\cite{WEI} we provide a significantly more simple approach, yielding similar results as the ones in this paper 
but for the setting
restricted to regular graphs exclusively. 
To prove the asymptotic bound for all $\dd$ in the case of general graphs, one of the real challenges is that the maximum degree and minimum degree could differ by any factor. 
This in particular forefends a direct application of the Lov\'asz Local Lemma.
In this paper we bypass all these difficulties. One of our main ideas is that although we cannot guarantee that there are no ``bad vertices" at all resulting from the random construction, yet the number of such bad vertices cannot be too large (in fact it is usually exponentially small). We therefore can accumulate those bad vertices and treat them at the end of the proof via careful and technical analysis.

\section{Proof of Main Results}

\subsection{Set-up}

We will focus on proving Theorem~\ref{thm:main2}. Only at the very end of the paper do we comment on how it directly implies Theorem~\ref{thm:main1}. Let us thus fix $\epsilon$, corresponding to Theorem~\ref{thm:main2}, followed by
an auxiliary small constant $\alpha$ such that
\begin{equation}
\epsilon\in (0,0.25),~~~~~~{   \alpha\in(0,\epsilon)}~~~~~~{\rm and}~~~~~~{   2\epsilon+\alpha<0.5},\label{EpsilonAlpha}
\end{equation}
and a graph $G$.
%
Let $X_v\sim U[0,1]$, $v \in V(G)$ be i.i.d.~ uniform random variables.
These 
are used to separate the vertices into $\delta$ bins. For $1 \leq i \leq \dd-1$, let the $i$-th \emph{bin}  be defined as
\[B_i = \{v : (i-1)/\dd \leq X_v < i/\dd\},\] 
and set the last bin as $B_\dd  = \{v : (\dd-1)/\dd \leq X_v \leq 1\}$. In expectation, every $B_i$ includes $n/\dd$ vertices. 
For each vertex $v$, we define: 
\[{\rm the~ random~ variable~} Z(v) \in [1, \dd] ~{\rm to~ be~ the~ bin~ number~} i~ {\rm such~ that~} v\in B_i.\] 

Let the small set $S$ be the union of bins $B_{i}$ with  $i > \dd- \sstar$ where
\[\sstar: =\ceiling{\dd^{1/2+{   \epsilon+\alpha}}}.\] 
Thus the expected number of vertices in $S$ is $n\sstar/\dd \ll_{\dd} n$. Denote $B := V(G) \setminus S$ to be the big set.

In order to take on certain preparatory measures prior to Step C (within which we will finally distinguish weights of the vertices in $S$),
we further partition $S$ into $k'$ similar sized subsets where 
\[{    k' := \min\left( \ceiling{\frac{n}{400\dd}}, \ceiling{\frac{\dd^{\epsilon}}{2000}}  \right)},\] such that each subset consists of $\ceiling{\sstar/k'}$ or $\integer{\sstar/k'}$ bins. For $i=1$, let $S_1$ be the first $\integer{\sstar/k'}$ bins in $S$, i.e., $S_1 = \{ \bigcup B_j: \dd-\sstar  < j \leq \dd {   -\sstar +\integer{\sstar/k'}}\}$. 
Next  sequentially define $S_2, \dots, S_{k'}$ so that $S_j$ is the union of the first $\integer{\sstar/k'}$ or $\ceiling{\sstar/k'}$ (depending on $\sstar \mod k'$) consecutive yet ungrouped bins in $S$. 
Furthermore set
\[k: = \ceiling{k'/1000}~~~~{\rm and}~~~~a': = \ceiling{{   7}n/(\dd k)} .\]
These two parameters will be used in Step A. 
%

We will show that when $\delta$ or $n/\delta$ is smaller than a constant $c$, then Theorem \ref{thm:main1} holds. Thus
throughout the computations in the paper, unless otherwise stated, we will assume there is an absolute constant $c$ such that $\delta \geq c$, $n/\delta \geq c$ and $c$ is large enough so that all explicit inequalities in the computations will hold.

\subsection{Step A.}
 
\begin{definition}\label{F1definition}
We define a weighting assignment $f_1: E(G) \to \mathbb{Z}$ in Step A as follows. 
\begin{enumerate}
\item For every  bin number $1 \leq i \leq \delta - \sstar$, for each vertex $v\in B_i \subset B$ and each of its neighbors $u$ in  $\{\bigcup B_j, {\delta-s^*-i+1< j \leq \delta-s^*}\}\subset B$, let $f_1(\{u,v\})$ be equal to $\ceiling{n/\dd}+a'$. 
\item For every integer $1 \leq j \leq k'$, for each vertex $v\in B$ and its neighbor $u \in S_j$, let $f_1(\{u,v\})$  be equal to $ \ceiling{ \ceiling{n/\dd}/{3k'}  }(j+k').$ 
\item Let $f_1$ equal $1$ on the rest of the edges in $B$, and let $f_1$ equal $0$ on the edges in $S$. 
\end{enumerate}
Let $\fv_1: V(G) \to \mathbb{Z}$ evaluate 
the weights of vertices under $f_1$. 
\end{definition}
\begin{definition}
Let $\sigma(v,i)$ be the random vertex weight $\fv_1(v)$ given $v \in B_i$. 
Let 
$I_0= (0, (\ceiling{n/\dd}+a'-1)),$
whereas for each integer $1 \leq h < {    2}n$, we define the interval \[I_h= [ h(\ceiling{n/\dd}+a'-1), (h+1)(\ceiling{n/\dd}+a'-1)).\] 
For each $1\leq i \leq \dd-\sstar$ and each vertex $v$, denote the expected weight of $v$ if $v\in B_i$ as 
\[ \sm(v,i) = \ee\left[\fv_1(v) \given v\in B_i\right]\] 
{and set $\sm(v,i)=0$ 
 for $\dd-\sstar < i\leq \delta$ and $v\in V(G)$}.
{   For $0\leq h < 2n$}, let $\mu_h$ be the expected number of vertices in $G$ such that $\sm(v, Z(v)) \in I_h$ {   (note all these vertices must belong to $B$)}. 
Given integers $0 \leq h_1 \leq h_2$, let 
\[\mu_{[h_1, h_2)} = \sum_{h=h_1}^{h_2-1}\mu_h.\]
\end{definition}

Note that since $0 \notin I_0$, then $\sigma_\mu(v, i) \notin I_h$ for all $h \geq 0$ if $i > \delta - \sstar$.

By Definition~\ref{F1definition}, 
\begin{equation}
\| \fv_1\|_\infty\leq \max_v \deg(v) (\ceiling{n/\dd}+a') \leq (n-1)(\ceiling{n/\dd}+a')  < 2n(\ceiling{n/\dd}+a'-1),
\label{eq:summuh}
\end{equation}
and thus $\mu_{[0, 2n)}  =\ee(|B|)$.

\begin{claim}\label{claim:smvi} 
Let $v$ be a fixed vertex and $i$ an integer in $[1, \dd-\sstar]$. Given $v \in B_i$, with probability at most $2e^{-\dd^{2{   \alpha}}/2}$, $\left| \sigma(v,i)- \sm(v,i)\right| > \deg(v)^{1/2+{   \alpha}} \left( \ceiling{\frac{n}{\dd}}+a'-1\right)$. In addition, for each vertex $v$ and a fixed integer $h \in [0, 2n)$, there is at most one bin $i$ such that ${   \sm}(v,i)\in I_h$. This implies 
$\mu_h \leq  n/\dd$.
\end{claim}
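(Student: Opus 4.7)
The plan is to treat the two assertions separately, since both reduce to direct computations from the explicit definition of $f_1$ in Definition~\ref{F1definition}.

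For the concentration bound, I would condition on $v\in B_i$ (so that $X_v$ is fixed) and write $\sigma(v,i)=\fv_1(v)=\sum_{u\in N(v)}f_1(\{v,u\})$ as a function of the $\deg(v)$ independent trials $\{X_u : u\in N(v)\}$. The key observation is that, since $v\in B$, every incident edge weight lies in $[1,\ceiling{n/\dd}+a']$: case~(3) contributes $1$, case~(1) contributes $\ceiling{n/\dd}+a'$, and the case~(2) values $\ceiling{\ceiling{n/\dd}/(3k')}(j+k')$ with $j\in[1,k']$ lie in between (a routine arithmetic check using the definitions of $k'$, $k$ and $a'$; in particular one checks $2k'\ceiling{\ceiling{n/\dd}/(3k')}\leq \ceiling{n/\dd}+a'$). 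Consequently, re-sampling any single $X_u$ alters $\sigma(v,i)$ by at most $c:=\ceiling{n/\dd}+a'-1$, and Lemma~\ref{lem:SCB} applied with $t=\deg(v)^{1/2+\alpha}c$ yields
\[
\Pr\!\left(|\sigma(v,i)-\sm(v,i)|>t\right)\;\leq\; 2e^{-t^{2}/(2c^{2}\deg(v))}\;=\;2e^{-\deg(v)^{2\alpha}/2}\;\leq\;2e^{-\dd^{2\alpha}/2},
\]
using $\deg(v)\geq\dd$.

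For the second assertion, the plan is to compute $\sm(v,i)$ explicitly and check that it shifts by at least one full interval length when $i$ increases by $1$. For a fixed neighbor $u$, the $i$-dependence of $\ee[f_1(\{v,u\})\given v\in B_i]$ arises only from cases~(1) and~(3): with probability $(i-1)/\dd$ the bin of $u$ lies in $\{\dd-\sstar-i+2,\ldots,\dd-\sstar\}$, triggering case~(1) and contributing $\ceiling{n/\dd}+a'$; with probability $(\dd-\sstar-i+1)/\dd$ the bin of $u$ falls in $\{1,\ldots,\dd-\sstar-i+1\}$, triggering case~(3) and contributing $1$; and the case~(2) contribution (probability $\sstar/\dd$) depends only on which $S_j$ contains $u$, not on $i$. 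Summing over $u\in N(v)$ and differencing gives
\[
\sm(v,i+1)-\sm(v,i)\;=\;\frac{\deg(v)}{\dd}\,(\ceiling{n/\dd}+a'-1)\;\geq\;\ceiling{n/\dd}+a'-1,
\]
where the inequality uses $\deg(v)\geq\dd$. Since this increment equals the common length of the intervals $I_h$, the strictly increasing sequence $\sm(v,1)<\sm(v,2)<\cdots<\sm(v,\dd-\sstar)$ visits any given $I_h$ at most once; and for $i>\dd-\sstar$ one has $\sm(v,i)=0\notin I_h$ by convention. The bound $\mu_h\leq n/\dd$ then drops out of
\[
\mu_h\;=\;\sum_{v\in V(G)}\frac{1}{\dd}\,\bigl|\{i\in[1,\dd]:\sm(v,i)\in I_h\}\bigr|\;\leq\;\frac{n}{\dd}.
\]

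No step is genuinely hard: the substance of the claim is baked into the construction of $f_1$, whose case-(1) weight is calibrated precisely so that the per-bin increment of $\sm$ matches one interval length. The only mild technicality, which I would dispatch first, is the Lipschitz-constant check in the first part, verifying that the case-(2) weights stay within $[1,\ceiling{n/\dd}+a']$; this is a routine estimate from the definitions of $k'$, $k$, and $a'$.
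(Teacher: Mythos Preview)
Your proposal is correct and follows essentially the same approach as the paper: both parts invoke the Simple Concentration Bound with Lipschitz constant $\lceil n/\delta\rceil+a'-1$ for the first assertion, and both exploit the explicit computation of $\sm(v,i)$ (with the $S$-contribution independent of $i$) for the second. The only cosmetic difference is that you argue ``at most one $i$ per $I_h$'' via the increment $\sm(v,i+1)-\sm(v,i)=\tfrac{\deg(v)}{\delta}(\lceil n/\delta\rceil+a'-1)\geq |I_h|$, whereas the paper instead rearranges the two defining inequalities of $I_h$ to pin $i$ inside a real interval of length $\delta/\deg(v)\leq 1$; these are the same observation phrased two ways.
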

\begin{proof}
Fix any  $i\in [1, \dd-\sstar]$. 
Given $v\in B_i$, the expected value of $\sigma(v,i)$ equals $\sm(v,i)$ and is determined by the values of $\deg(v)$ independent variables $X_v$, $v\in N(v)$. As by definition of $f_1$, changing the outcome of any one of these variables can affect $\sigma(v,i)$ by at most $ \left(\ceiling{\frac{n}{\dd}}+a'-1\right)$, the Simple Concentration Bound (Lemma~\ref{lem:SCB}) implies that the probability that $\left| \sigma(v,i)- \sm(v,i)\right| > \deg(v)^{1/2+{   \alpha}} \left( \ceiling{\frac{n}{\dd}}+a'-1\right)$ is at most
\begin{align*}
2e^{-\deg(v)^{1+2{   \alpha}} \left( \ceiling{\frac{n}{\dd}}+a'-1\right)^2/(2\left( \ceiling{\frac{n}{\dd}}+a'-1\right)^2\deg(v))}
\leq 2e^{-\delta^{2{   \alpha}}/2},
\end{align*} 
as desired.

To prove our second claim, we need to approximate $\sm(v,i)$. This is equal to $w_\ee(v,i, B)+ w_\ee(v,i, S)$ where $w_\ee(v,i, B)$ and  $w_\ee(v,i, S)$ denote  the expected weights of $v$ coming from its neighbors in $B$ and $S$, respectively, given that $v\in B_i$. Let us denote for simplicity: $B^i = \{\cup B_j, \delta-\sstar  - i + 1 < j \leq \delta- \sstar\}$.
The probability that a fixed neighbor of $v$ lies in  $B^i$ 
equals exactly $(i-1)/\dd$. Thus the expected number of neighbors of $v$ lying  in $B^i$ 
equals $\deg(v) \frac{i-1}{\dd}$. Similarly, the expected number of neighbors of $v$ lying in $B \setminus B^i$ 
equals $\deg(v) \frac{\dd - \sstar - i+1}{\dd}$. Therefore, 
\begin{align}
w_\ee(v,i, B)= & 
(\lceil n/\dd \rceil + a') \ee[\deg_{B^i}(v)] +1 \cdot \ee[\deg_{B \setminus B^i}(v)]  \nonumber \\
 = & 
\deg(v) \frac{i-1}{\dd}\left( \ceiling{\frac{n}{\dd}}+a'\right) + 1 \cdot \deg(v) \frac{\dd - \sstar - i+1}{\dd} \nonumber \\
= &
\frac{  \deg(v) }{\dd} \left( (i-1)\left(\ceiling{\frac{n}{\dd}}+a' -1 \right) +\dd -\sstar \right).   \label{eq:weightvB}
\end{align}
On the other hand, by condition 2 in the definition of $f_1$, the expected value $w_\ee(v,i, S)$ does not depend on $i$, and thus is equal to some number $w_\ee(v,S)$, whose precise value is irrelevant within our proof.
Combining this fact with~(\ref{eq:weightvB}), we obtain that
\begin{align}
\sm(v,i) = 
\  w_\ee(v,i, B)+ w_\ee(v,i,S) 
= 
\ \frac{\deg(v)}{\dd} \left(   (i-1)\left( \ceiling{\frac{n}{\dd}}+a'-1\right) 
 +\dd-\sstar\right) + w_\ee(v,S). \nonumber 
\end{align}
Therefore,
 $\sm(v,i)\in I_h$ implies that 
\begin{align*}
h\left(\ceiling{\frac{n}{\dd}}+a'-1\right)\leq 
\frac{\deg(v)}{\dd} \left(   (i-1)\left( \ceiling{\frac{n}{\dd}}+a'-1\right) 
 +\dd-\sstar\right)  + w_\ee(v,S) 
  <(h+1)\left(\ceiling{\frac{n}{\dd}}+a'-1\right).
\end{align*}
Rearranging the inequalities, we have:
\[
\frac{\dd h}{\deg(v)} - \frac{\dd -\sstar + \frac{\delta}{\deg(v)}w_\ee(v,S)}
{\ceiling{\frac{n}{\dd}}+a'-1} \leq i - 1   < \frac{\dd (h+1)}{\deg(v)} 
- \frac{\dd -\sstar + \frac{\delta}{\deg(v)}w_\ee(v,S)}
{\ceiling{\frac{n}{\dd}}+a'-1}.
\]
Thus in order to make $\sm(v,i) \in I_h$, $i$ has to lie in an interval (with one end open) of length $\dd/\deg(v) \leq 1$. 
Therefore, given $h$ and $v$, there is at most one integer $i \leq \dd-\sstar$ so that $\sm(v,i) \in I_h$. The probability that $v$ lies in the corresponding bin (if exists) equals $1/\dd$. Thus 
$\mu_h \leq \sum_{v \in V(G)} (1/\dd) = n/\dd.
$
\end{proof}

\begin{definition}
Given an integer $0\leq h <  2n$, let 
$V_h$ be the set of vertices such that 
$\sm(v, Z(v)) \in I_h$ and 
$\left| \sigma(v,Z(v))- \sm(v,Z(v))\right| \leq \deg(v)^{1/2+{   \alpha}} \left( \ceiling{\frac{n}{\dd}}+a'-1\right)$. 

Let $\yb{h}$ be the set of bad vertices such that 
$\sm(v, Z(v)) \in I_h$ and 
$\left| \sigma(v,Z(v))- \sm(v,Z(v))\right|$ $>$ $ \deg(v)^{1/2+{   \alpha}}$ $\left( \ceiling{\frac{n}{\dd}}+a'-1\right)$. 

Given two integers $0\leq h_1 \leq  h_2 \leq 2n$, let 
\[V_{[h_1, h_2) }= \bigcup_{h_1 \leq h< h_2}V_h~~~~~~~~{\rm and}~~~~~~~~\yb{[h_1, h_2)}= \bigcup_{h_1 \leq h< h_2}\yb{h}.\]
\end{definition}

The sets $\yb{h}$ consist of the first kind of bad vertices in the proof. 
Clearly $V_h\cup \yb{h}$ is the set of all vertices $v$ such that $\sm(v,Z(v))\in I_h$, 
and thus, by~(\ref{eq:summuh}), $V_{[0,2n)} \cup \yb{[0, 2n)}=B$.

\begin{lemma}\label{lem:yb}
Given two integers $0\leq h_1 <h_2 \leq 2n$ such that $\mu_{[h_1, h_2)} > 1$,  the following inequalities hold simultaneously with probability at least 
$1- e^{-\dd^{2\alpha}/4} -2\exp\left(
-\frac{1}{{3}}
\sqrt{\frac{n}{\dd^{1-2{\alpha}}}}
\right)$:
\begin{align}
& { \left|V_{[h_1, h_2)}\right| \leq   \mu_{[h_1, h_2)}   + \sqrt{ \frac{n\mu_{[h_1, h_2)}}{\dd^{1-2{\alpha}}}}; }  \label{eq:sizeVh} \ \ \text{and} \\
& \left|  \yb{[h_1, h_2)}  \right| {< 2}\mu_{[h_1, h_2)}  {\exp({-\dd^{2{\alpha}}/4})}. \label{eq:sizeyb}
\end{align}
In addition, if $\dd > \sqrt{n}$, with probability at least $1-1/{n}$, 
\begin{align}
& \yb{[0, 2n)}  = \emptyset. \label{eq:sizeyb2}
\end{align}
\end{lemma}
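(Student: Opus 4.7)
My strategy is to prove each of the three inequalities separately and union-bound the two failure probabilities for the simultaneous statement.

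For (\ref{eq:sizeyb}) I would estimate $\ee[|\yb{[h_1,h_2)}|]$ directly. Fix a vertex $v$; by the second part of Claim~\ref{claim:smvi}, for each integer $h\in[h_1,h_2)$ there is at most one bin index $i=i(v,h)\in[1,\dd-\sstar]$ with $\sm(v,i)\in I_h$. The event $\{Z(v)=i(v,h)\}$ depends only on $X_v$, whereas the concentration-failure event depends only on $\{X_u:u\in N(v)\}$, so the two are independent; combined with the first part of the same Claim this gives $\Pr[v\in\yb{h}]\le 2e^{-\dd^{2\alpha}/2}\cdot\Pr[Z(v)=i(v,h)]$. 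Summing over $v$ and then over $h\in[h_1,h_2)$ yields $\ee[|\yb{[h_1,h_2)}|]\le 2e^{-\dd^{2\alpha}/2}\mu_{[h_1,h_2)}$, and Markov's inequality at the threshold $2\mu_{[h_1,h_2)}e^{-\dd^{2\alpha}/4}$ contributes failure probability at most $e^{-\dd^{2\alpha}/4}$, giving (\ref{eq:sizeyb}).

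For (\ref{eq:sizeVh}) I would bound the larger quantity $T:=|V_{[h_1,h_2)}\cup\yb{[h_1,h_2)}|=\sum_v\1[\sm(v,Z(v))\in\bigcup_{h\in[h_1,h_2)}I_h]$, since $|V_{[h_1,h_2)}|\le T$. Each indicator depends only on $X_v$, so these Bernoullis are mutually independent with $\ee[T]=\mu_{[h_1,h_2)}$, which allows me to apply Corollary~\ref{bound:chernoff2} with $t=\sqrt{n\mu_{[h_1,h_2)}/\dd^{1-2\alpha}}$. If $\mu_{[h_1,h_2)}\ge t$, the Chernoff exponent is $-t^2/(3\mu_{[h_1,h_2)})=-n/(3\dd^{1-2\alpha})\le -\sqrt{n/\dd^{1-2\alpha}}/3$ (using $n\ge\dd\ge\dd^{1-2\alpha}$, hence $n/\dd^{1-2\alpha}\ge 1$); if $\mu_{[h_1,h_2)}<t$, the exponent is $-t/3=-\sqrt{n\mu_{[h_1,h_2)}/\dd^{1-2\alpha}}/3\le -\sqrt{n/\dd^{1-2\alpha}}/3$ thanks to the hypothesis $\mu_{[h_1,h_2)}>1$. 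Either way the failure probability is at most $2\exp(-\sqrt{n/\dd^{1-2\alpha}}/3)$, and a union bound with the previous paragraph's estimate gives the simultaneous statement. Finally, (\ref{eq:sizeyb2}) follows from a plain union bound: $\Pr[v\in\yb{[0,2n)}]=\sum_{i=1}^{\dd-\sstar}\Pr[Z(v)=i]\cdot\Pr[\text{concentration fails}\mid Z(v)=i]\le 2e^{-\dd^{2\alpha}/2}$ by Claim~\ref{claim:smvi}, so $\Pr[\yb{[0,2n)}\ne\emptyset]\le 2ne^{-\dd^{2\alpha}/2}\le 2ne^{-n^\alpha/2}$ when $\dd>\sqrt n$, which is $o(1/n)$ once $n$ is large enough relative to $c$.

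The only real care point is the independence structure in step (\ref{eq:sizeVh}): the concentration-failure events for different vertices are \emph{not} independent because their neighborhoods overlap, which is precisely why I pass to the slightly larger $T$, whose defining indicators are measurable with respect to the disjoint per-vertex randomness $\{X_v\}$ and therefore genuinely mutually independent. Everything else reduces to routine Chernoff/Markov bookkeeping with the two regimes of $t$ versus $\mu_{[h_1,h_2)}$.
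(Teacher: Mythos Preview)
Your treatment of (\ref{eq:sizeyb}) and (\ref{eq:sizeyb2}) matches the paper's argument essentially line for line: compute $\ee[|\yb{[h_1,h_2)}|]$ via Claim~\ref{claim:smvi} and the independence of $X_v$ from $\{X_u:u\in N(v)\}$, then apply Markov; for the last part, union-bound over vertices.

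For (\ref{eq:sizeVh}) your route is genuinely different, and in one respect cleaner. The paper decomposes into the finer indicators $Y_{v,i,h}=\1[v\in B_i,\ \sm(v,i)\in I_h]$; for fixed $v$ these satisfy $\sum_{i,h}Y_{v,i,h}\le 1$, so they are negatively associated (Lemma~\ref{lem:NAzero-one}), and Lemma~\ref{lem:NAcp} together with the NA Chernoff (Lemma~\ref{lem:NAChernoff}) finishes. You instead observe that the per-vertex indicators $\1[\sm(v,Z(v))\in\bigcup_h I_h]$ are measurable with respect to the disjoint randomness $X_v$ and hence genuinely independent, which sidesteps the negative-association machinery altogether. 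That is a nice simplification.

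There is, however, one point to patch. Corollary~\ref{bound:chernoff2} as stated in the paper is for \emph{i.i.d.} Bernoulli variables, and your per-vertex indicators are not identically distributed: the success probability for $v$ is $|\{i\le\dd-\sstar:\sm(v,i)\in\bigcup_h I_h\}|/\dd$, which depends on $\deg(v)$. The inequality you need does hold for sums of independent (not necessarily identically distributed) Bernoullis with the mean $\mu_{[h_1,h_2)}$ in place of $np$, but you should either cite that version explicitly or note that the paper's own finer decomposition recovers identically distributed pieces: each non-trivially-zero $Y_{v,i,h}$ is Bernoulli$(1/\dd)$, which is precisely why the paper goes through the NA route rather than plain independence.
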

\begin{proof}
For each vertex $v$ and integers $i \in [1, \dd-\sstar]$, $h\in[0,{2}n)$, let a random variable
$Y_{v,i,h}$ be the indicator function that $v\in B_i$ and $\sm(v, i)\in I_h$; 
define
$Z_{v,i,h}$ to be equal to $1$ if $v\in B_i$,  $\sm(v, i)\in I_h$ and $\left| \sigma(v,i)- \sm(v,i)\right| >  \deg(v)^{1/2+{   \alpha}} \left( \ceiling{\frac{n}{\dd}}+a'-1\right)$, and $Z_{v,i,h}=0$ otherwise.
Thus 
\begin{align}
& |V_h| = \sum_{v\in V(G), 1\leq i \leq \dd-\sstar} (Y_{v,i,h}-Z_{v,i,h}) ~~~
{\leq \sum_{v\in V(G), 1\leq i \leq \dd-\sstar} Y_{v,i,h},}  \label{eq:VhX}\\
&  |\yb{h}|= \sum_{v\in V(G), 1\leq i \leq \dd-\sstar}Z_{v,i,h}, \label{eq:ybsumZ} \\
& \ee[\sum_{h_1 \leq h < h_2}\sum_{v\in V(G)}\sum_{1\leq i\leq \dd- \sstar} Y_{v,i,h}]=
\sum_{h_1 \leq h < h_2}\sum_{v\in V(G)}\sum_{1\leq i\leq \dd- \sstar} \Pr(v \in B_i, \sm(v,i)\in I_h)
=
\ \mu_{[h_1, h_2)}. \label{eq:Ymu}
\end{align}

By Claim \ref{claim:smvi},
\begin{align}
\Pr(Z_{v,i,h}=1) = & \Pr\left(
\left| \sigma(v,i)- \sm(v,i)\right| >  \deg(v)^{1/2+{\alpha}} \left( \ceiling{\frac{n}{\dd}}+a'-1\right)   \given[\Big]{ v \in B_i} \right) \cdot \Pr(v\in B_i, \sm(v,i)\in I_h) \nonumber  \\
\leq & {2e^{-\dd^{2{\alpha}}/2}} \Pr(v\in B_i, \sm(v,i)\in I_h).\label{eq:Zprosmall}
\end{align}
Thus together with (\ref{eq:ybsumZ}), (\ref{eq:Zprosmall}) and (\ref{eq:Ymu}),
\begin{align}
\ee(|\yb{[h_1, h_2)}|) =  &\sum_{h_1 \leq h < h_2} \sum_{v\in V(G)}\sum_{1\leq i \leq \dd-\sstar}\ee(Z_{v,i,h}) =  \sum_{h_1 \leq h < h_2} \sum_{v\in V(G)}\sum_{1\leq i \leq \dd-\sstar}\Pr(Z_{v,i,h}=1) \nonumber \\
\leq & {2e^{-\dd^{2{\alpha}}/2}} \sum_{h_1 \leq h < h_2}\sum_{v\in V(G)} \sum_{1\leq i \leq \dd-\sstar}\Pr(v\in B_i, \sm(v,i)\in I_h)
=
 {2e^{-\dd^{2{\alpha}}/2}}  \mu_{[h_1, h_2)}. \nonumber
\end{align}

Therefore by Markov's Inequality,
\begin{equation}
\Pr\left(|\yb{[h_1, h_2)}| \geq  {2e^{-\dd^{2{\alpha}}/4}}  \mu_{[h_1, h_2)}\right) \leq \frac{\ee(|\yb{[h_1, h_2)}|)}{ {2e^{-\dd^{2{\alpha}}/4}}  \mu_{[h_1, h_2)}} \leq \frac{ {2e^{-\dd^{2{\alpha}}/2}} \mu_{[h_1, h_2)}}{ {2e^{-\dd^{2{\alpha}}/4}}  \mu_{[h_1, h_2)}}=  {e^{-\dd^{2{\alpha}}/4}} .  \label{eq:ybclose}
\end{equation}

For the sake of~(\ref{eq:sizeyb}) we will now bound
the probability that $\sum_{h_1 \leq h < h_2}\sum_v\sum_{1\leq i\leq \dd- \sstar} Y_{v,i,h}$ is 
far from its expectation in (\ref{eq:Ymu}).
Note that for a fixed $v$, although there is no bin $B_i$ that vertex $v$ must lie in, given any random experiment
(evaluating $X_v$) 
there is exactly one bin $B_i$ that $v$ belongs to, while 
$\deg(v)$ together with the bin $B_i$ determine the unique at most one value of $h$ for which $\sm(v,i)\in I_h$. 
Thus 
the indicator random variables satisfy $\sum_{i, h} Y_{v,i,h} \leq 1$, and hence, by Lemma \ref{lem:NAzero-one}, the random variables $\{Y_{v,i,h}\}_{i,h}$ are negatively associated. Furthermore, since the variables in $\{Y_{v,i,h}\}_{i,h}$ are independent from such variables corresponding to other vertices, by Lemma \ref{lem:NAcp}, the 
random variables in $\{Y_{v,i,h}\}_{v, h_1 \leq h < h_2, 1\leq i\leq \dd- \sstar}$ are negatively associated. Thus, leaving out the random variables $Y_{v,i,h}$ which are constantly zero, the rest are identically distributed and negatively associated. Hence we may use the Chernoff Bound (Lemma~\ref{lem:NAChernoff})  and the shorthand $Y =\sum_{h_1 \leq h < h_2}\sum_v\sum_{1\leq i\leq \dd- \sstar} Y_{v,i,h}$, where by (\ref{eq:Ymu}), $\ee[Y] = \mu_{[h_1, h_2)}$: 
\begin{align}
& \Pr\left(\left|Y - \mu_{[h_1, h_2)}\right|  > 
\sqrt{\frac{n\mu_{[h_1, h_2)}}{{\dd^{1-2{\alpha}}}}} \right) 
\leq  2\exp\left(-\frac{ n\mu_{[h_1, h_2)}/{\dd^{1-2{\alpha}}}} {3 \max\left(\sqrt{n\mu_{[h_1, h_2)} /{\dd^{1-2{\alpha}}}}, \mu_{[h_1, h_2)} \right)}\right) \nonumber\\ 
\leq & 2\exp\left(
-\min \left(
\frac{\sqrt{n\mu_{[h_1, h_2)} /{\dd^{1-2{\alpha}}}}}{3}, \frac{n}{{3}\dd^{1-2{\alpha}}}
\right)
\right) \leq 2\exp\left(
-
\frac{\sqrt{n/{\dd^{1-2{\alpha}}}}}{3}
\right).
\label{eq:Yclose}
\end{align}

Inequalities~(\ref{eq:ybclose}) and (\ref{eq:Yclose}) thus imply 
~(\ref{eq:sizeVh}) and~(\ref{eq:sizeyb}) (cf.~(\ref{eq:VhX})).
%
%
%
Moreover, by (\ref{eq:Zprosmall}), 
\begin{align}
\Pr(v \in \yb{[0,2n)}) = & \sum_{1\leq i\leq \dd-\sstar, h\in[0,2n)} \Pr(Z_{v,i,h}=1)  
\leq {2e^{-\dd^{2{\alpha}}/2}} 
\sum_{1\leq i\leq \dd-\sstar} \sum_{h\in[0,2n)}\Pr(v\in B_i, \sm(v,i)\in I_h) \\
\leq & {2e^{-\dd^{2{\alpha}}/2}} 
\sum_{1\leq i\leq \dd-\sstar} \Pr(v\in B_i) <{2e^{-\dd^{2{\alpha}}/2}.} 
\end{align}
Thus by a union bound over $v$, when $\dd > \sqrt{n}$, $\Pr\left(\yb{[0,2n)} = \emptyset\right) \geq 1 - n 
{2e^{-\dd^{2{\alpha}}/2}} 
> 1- 1/{n}$. 
\end{proof}

Since the degree distribution 
can vary in $G$, it will be useful to group $\mu_h$'s of smaller sizes. 
As by Claim~\ref{claim:smvi}, $\mu_h \leq n/\dd$ for each integer $h$, we may define the following benchmarks to that end.
\begin{definition}
We can sequentially define the {\it benchmarks} $\bench{1},\bench{2}, \dots$ such that $\bench{1}=0$ and 
\begin{equation}
n/(2\dd) < \mu_{[\bench{i},\bench{i+1})} \leq {2n/\dd}. 
\label{eq:bench}
\end{equation} 
\end{definition}
\begin{claim}\label{claim:bench}
The number of benchmarks is 
at most ${2\dd}$.
\end{claim}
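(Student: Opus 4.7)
The plan is a direct mass-counting argument using the total $\mu_{[0,2n)}$. By (\ref{eq:summuh}) we have $\mu_{[0,2n)} = \ee(|B|) \leq n$. Each pair of consecutive benchmarks $h^*_i, h^*_{i+1}$ defines a disjoint subinterval of $[0,2n)$ contributing strictly more than $n/(2\dd)$ to this total. So if the construction produces $K$ benchmarks $h^*_1 = 0 < h^*_2 < \cdots < h^*_K$, there are at least $K-1$ such intervals, giving
\begin{equation*}
(K-1)\cdot \frac{n}{2\dd} < \sum_{i=1}^{K-1}\mu_{[h^*_i, h^*_{i+1})} \leq \mu_{[0,2n)} \leq n,
\end{equation*}
hence $K - 1 < 2\dd$ and therefore $K \leq 2\dd$, as claimed.

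The only point that needs a short sanity check is that the greedy construction implicit in the definition is actually well-defined, i.e.\ that one can always choose $h^*_{i+1}$ so that $\mu_{[h^*_i, h^*_{i+1})}$ lands in the prescribed window $(n/(2\dd), 2n/\dd]$. This uses Claim~\ref{claim:smvi}, which gives $\mu_h \leq n/\dd$ for every $h$: taking $h^*_{i+1}$ to be the smallest index for which the partial sum first exceeds $n/(2\dd)$, the jump contributed by the final $h$ is at most $n/\dd$, so the partial sum at that point is at most $n/(2\dd) + n/\dd = 3n/(2\dd) \leq 2n/\dd$. The process simply terminates when the remaining tail $\mu_{[h^*_K, 2n)}$ is no longer large enough to create another interval above the threshold $n/(2\dd)$; this tail is not counted among the benchmark-defining intervals in the bound above, but it also does not affect the argument.

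There is no real obstacle here: the whole claim is a pigeonhole-type consequence of the per-bin cap $\mu_h \leq n/\dd$ from Claim~\ref{claim:smvi} combined with the global mass bound from (\ref{eq:summuh}). The only thing to be mildly careful about is counting intervals versus endpoints correctly, which is why the bound comes out to $2\dd$ rather than $2\dd+1$.
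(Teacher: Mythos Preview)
Your proof is correct and follows exactly the paper's approach: the paper's one-line proof invokes $\sum_{0\leq h<2n}\mu_h \leq n$ together with the lower bound in (\ref{eq:bench}), which is precisely the mass-counting argument you spell out. Your added sanity check on well-definedness of the sequential construction (via the per-index cap $\mu_h\leq n/\delta$ from Claim~\ref{claim:smvi}) is a useful elaboration the paper leaves implicit, but it does not change the method.
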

\begin{proof}
Since $\sum_{0\leq h<2n} \mu_h {\leq} n$, 
the claim holds by (\ref{eq:bench}) (and~(\ref{eq:summuh})). 
\end{proof}


By (\ref{eq:bench}) and Claim~\ref{claim:bench} we may apply Lemma \ref{lem:yb} to at most ${(2\dd)^2 = 4\dd^2}$ distinct benchmark pairs $(\bench{i}, \bench{j})$ in order to conclude that the following union bound holds.
\begin{corollary}\label{cor:yb}
With probability at least $1-{4}\dd^2 {e^{-\dd^{2{\alpha}}/4}} 
- {8}\dd^2\exp\left(
-\frac{1}{{3}}
\sqrt{\frac{n}{\dd^{1-2{\alpha}}}}
\right)$,
for any two benchmarks $\bench{i} {   <} \bench{j}$,  the following two inequalities hold simultaneously:
\begin{align}
& { \left|V_{[\bench{i}, \bench{j})}\right|  \leq  \mu_{[\bench{i}, \bench{j})}  +\sqrt{\frac{n \mu_{[\bench{i}, \bench{j})}}{\dd^{1-2{\alpha}}}}};  \label{eq:sizeVh2} \ \ \text{and} \\
& \left|  \yb{[\bench{i}, \bench{j})}  \right|  
{ < 2}\mu_{[\bench{i}, \bench{j})}  {\exp({-\dd^{2{\alpha}}/4})}.
\label{eq:sizeyb2}
\end{align}
In addition, if $\dd >\sqrt{n}$, with probability at least $1- 1/{n}$, $\yb{[0,2n)} = \emptyset$.
\end{corollary}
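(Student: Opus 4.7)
The plan is to derive Corollary~\ref{cor:yb} as an essentially mechanical union bound of Lemma~\ref{lem:yb} applied to every ordered pair of benchmarks. First, I would invoke Claim~\ref{claim:bench} to bound the number of benchmarks $\bench{1}<\bench{2}<\cdots$ by $2\dd$, which in turn yields at most $\binom{2\dd}{2}\leq (2\dd)^2=4\dd^2$ admissible pairs $(\bench{i},\bench{j})$ with $\bench{i}<\bench{j}$.

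Next, I would verify the hypothesis $\mu_{[h_1,h_2)}>1$ of Lemma~\ref{lem:yb} on each benchmark window. Because the defining inequality~(\ref{eq:bench}) gives $\mu_{[\bench{i},\bench{i+1})}>n/(2\dd)$, monotonicity of $\mu_{[h_1,h_2)}$ in the second index forces $\mu_{[\bench{i},\bench{j})}\geq \mu_{[\bench{i},\bench{i+1})}>n/(2\dd)$ for every $\bench{i}<\bench{j}$. The standing assumption in the set-up that $n/\dd$ exceeds a suitably large absolute constant $c$ guarantees $n/(2\dd)>1$, so Lemma~\ref{lem:yb} applies in every case.

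Applying Lemma~\ref{lem:yb} to a fixed pair $(\bench{i},\bench{j})$, the probability that inequality~(\ref{eq:sizeVh}) or~(\ref{eq:sizeyb}) fails is at most
\[
e^{-\dd^{2\alpha}/4}+2\exp\!\left(-\tfrac{1}{3}\sqrt{n/\dd^{1-2\alpha}}\right).
\]
Taking a union bound over the $\leq 4\dd^2$ pairs yields the stated success probability of at least
\[
1-4\dd^{2}e^{-\dd^{2\alpha}/4}-8\dd^{2}\exp\!\left(-\tfrac{1}{3}\sqrt{n/\dd^{1-2\alpha}}\right),
\]
after which the translated inequalities~(\ref{eq:sizeVh2}) and~(\ref{eq:sizeyb2}) hold simultaneously for \emph{every} pair. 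The concluding statement about $\yb{[0,2n)}=\emptyset$ when $\dd>\sqrt{n}$ is simply restated from the last sentence of Lemma~\ref{lem:yb}, since that bound is already formulated globally for the set $[0,2n)$ and requires no extra union step over benchmarks.

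I do not anticipate any real obstacle here; the corollary is a bookkeeping step whose sole purpose is to replace the single-interval guarantee from Lemma~\ref{lem:yb} with a uniform guarantee across all benchmark windows, ready to be used in Steps~B and~C. The only subtlety worth double-checking is that the union bound is taken over ordered pairs rather than unordered ones, but the crude estimate $(2\dd)^{2}=4\dd^{2}$ already absorbs this without changing the stated bound.
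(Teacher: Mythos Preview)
Your proposal is correct and follows essentially the same approach as the paper: the paper's proof is the one-sentence remark preceding the corollary, which likewise invokes Claim~\ref{claim:bench} to bound the number of benchmark pairs by $(2\dd)^2=4\dd^2$ and then applies Lemma~\ref{lem:yb} to each pair via a union bound. Your write-up is in fact a bit more careful, explicitly checking the hypothesis $\mu_{[\bench{i},\bench{j})}>1$ of Lemma~\ref{lem:yb} via~(\ref{eq:bench}) and the standing assumption that $n/\dd$ is large, which the paper leaves implicit.
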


\subsection{Step B.} 
\subsubsection{Preparations}
Prior to performing Step B, we will expose that 
with high probability, all except a small fraction of vertices have relatively large degrees to $S$. 
We thus define below new types of bad vertices.
\begin{definition}\label{def:ys}
Let $\ys$ 
be the set of vertices $v\in V(G)$ with {less than} %
 $\sstar \deg(v)/(2\dd)$ neighbors in $S$.  
Let $\ysn \subset B$ be an arbitrary set of vertices such that each vertex $v \in \ys$ has at least 
$\sstar/2$ neighbors in $\ysn$ and $|\ysn| \leq {\ceiling{\sstar/2}} |\ys|$. 
\end{definition}
To see that such set $\ysn$ exists, note that if a vertex has 
at most 
$\sstar \deg(v)/(2\dd)$ neighbors in $S$, then it has at least $\deg(v) - \sstar \deg(v)/(2\dd) > \sstar/2$ neighbors in $B$. 
For each $v \in \ys$, we may thus choose arbitrary {$\ceiling{\sstar/2}$} neighbors of $v$ {in $B$} and add these to $\ysn$. 
Consequently, $|\ysn| \leq {\ceiling{\sstar/2}} |\ys|$.
Note there might be vertices of $\ys$ that are in $S$. 

\begin{lemma}\label{lem:ysysn}
With probability at least $1- \exp(-\sstar/24)- 2\exp\left(   - n/({4}\delta^{0.5})\right) $, the following statements hold:
\begin{align}
&\left|\left|S\right| - \sstar n / \dd\right| \leq n/\delta^{0.5-\epsilon}, \label{IneqS}\\
& |\ys| { <} 2ne^{-\sstar/24}, \ \ \text{and thus} \ \    |\ysn| {<} {2} n \sstar e^{-\sstar/24}. \label{IneqZS}
\end{align}
\end{lemma}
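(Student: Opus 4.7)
The plan is to derive the two displayed inequalities from independent Chernoff estimates and then combine them via a union bound. The crucial observation is that bin membership of each vertex $v$ depends only on its own uniform variable $X_v$, so the relevant indicator random variables are independent Bernoullis with parameter $\sstar/\dd$ in both estimates, and neither argument needs to exploit graph structure beyond the minimum degree bound.

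For (\ref{IneqS}), $|S|$ is binomial with parameters $n$ and $\sstar/\dd$ and mean $n\sstar/\dd$. I would apply the first (sub-Gaussian) case of the Chernoff bound (Lemma~\ref{bound:chernoff}) with deviation $t=n/\dd^{0.5-\epsilon}$; the hypothesis $t\leq n\sstar/\dd$ holds since $\sstar\geq\dd^{0.5+\epsilon}$. The resulting exponent is $-t^2/(3n\sstar/\dd)=-n\dd^{2\epsilon}/(3\sstar)$, and plugging in $\sstar\leq 2\dd^{0.5+\epsilon+\alpha}$ gives an upper bound of $-n/(6\dd^{0.5+\alpha-\epsilon})$, which in turn is at most $-n/(4\dd^{0.5})$ once $\dd$ is large enough, using $\alpha<\epsilon$ from (\ref{EpsilonAlpha}).

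For (\ref{IneqZS}), I fix a vertex $v$ and note that $\deg_S(v)$ is a sum of $\deg(v)\geq\dd$ independent Bernoulli($\sstar/\dd$) variables with mean $\sstar\deg(v)/\dd$. The Chernoff bound applied to the lower tail at half the mean yields
\[\Pr(v\in\ys)\leq 2\exp\!\left(-\frac{\deg(v)\sstar}{12\dd}\right)\leq 2e^{-\sstar/12}.\]
Summing over $v$ gives $\ee|\ys|\leq 2ne^{-\sstar/12}$, and Markov's inequality then yields $\Pr(|\ys|\geq 2ne^{-\sstar/24})\leq e^{-\sstar/24}$. The bound on $|\ysn|$ follows deterministically from the per-vertex selection of at most $\lceil\sstar/2\rceil$ replacement neighbors in Definition~\ref{def:ys}. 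A final union bound over the two failure events produces the probability claimed in the statement.

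The main obstacle is purely numerical bookkeeping: verifying that the Chernoff exponent $n\dd^{2\epsilon}/(3\sstar)$ arising from the binomial concentration of $|S|$ dominates the cleaner form $n/(4\dd^{0.5})$ appearing in the lemma. This reduces to the inequality $\dd^{\alpha-\epsilon}\leq 2/3$, which is secured by the strict inequality $\alpha<\epsilon$ in (\ref{EpsilonAlpha}) combined with the standing assumption that $\dd$ exceeds a sufficiently large absolute constant. No concentration tool beyond Chernoff and Markov is needed, since neither $|S|$ nor any $\deg_S(v)$ requires the negative-association machinery invoked elsewhere in the paper.
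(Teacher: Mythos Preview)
Your proposal is correct and follows essentially the same argument as the paper: both treat $|S|$ and each $\deg_S(v)$ as sums of i.i.d.\ Bernoulli$(\sstar/\dd)$ variables, apply the Chernoff bound (at deviation $t=n/\dd^{0.5-\epsilon}$ for $|S|$ and at half the mean for $\deg_S(v)$), pass to $\ee|\ys|$ and then Markov, and finish with a union bound. The only cosmetic difference is in the constant you carry through the $|S|$ estimate (a factor $6$ rather than $4$ before invoking $\alpha<\epsilon$), which is immaterial under the standing large-$\dd$ assumption.
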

In addition, if $\dd > \sqrt{n}$, with probability at least $1-1/{n}$, ${\ys} = \emptyset$ and thus $\ysn = \emptyset$. 
\begin{proof}
For each vertex $v$, let $Z_v= 1$ if $v$ has {less than} 
$\sstar \deg(v)/(2\dd)$ neighbors in the random set $S$, and $Z_v=0$ otherwise. Thus $
|\ys |= \sum_v Z_v
$.

Fix $v \in {V(G)}$; each of its $\deg(v)$ neighbors has independently 
probability $\sstar/\dd$ to be in $S$. The expected number of neighbors of $v$ in $S$ is thus $\deg(v) \sstar/\dd$. By the Chernoff Bound (Lemma \ref{bound:chernoff}), 
\begin{align}
& \Pr(Z_v = 1)= \Pr\left(\deg_S(v) { <}  { 0.5}\ee[\deg_S(v)] \right)  
\leq 
\Pr\left(  \left|  \deg_S(v) - \ee[\deg_S(v)] \right| { >}  0.5 \ee[\deg_S(v)] \right) \nonumber \\
\leq & 2\exp\left( - \ee[\deg_S(v)]/12 \right) =  2\exp\left( - \deg(v)\sstar/(12\dd) \right) \leq 2\exp\left( - \sstar/12 \right). \label{eq:zv1}
\end{align}
Since
$
|\ys |= \sum_v Z_v
$, by (\ref{eq:zv1}) we have: \begin{equation}
\ee[|\ys|] =  \sum_v \Pr(Z_v=1) \leq 2n\exp\left( - \sstar/12 \right). \nonumber 
\end{equation} 
Therefore, by Markov's Inequality, 
\begin{equation}
\Pr\left(|\ys| \geq 2n\exp\left( - \sstar/24 \right) \right)
 \leq
  \ee\left[|\ys|\right]/ \left( 2n\exp\left( - \sstar/24 \right) \right) \leq \exp(-\sstar/24). \label{PrZS}
\end{equation}
We are left to bound the probability that $\left|\left|S\right| - \sstar n / \dd\right| \leq n/\delta^{0.5-\epsilon}$. Each vertex  independently has probability $\sstar/\dd$ to be in $S$, and hence $\ee[|S|]=\sstar n/\dd \geq n/\delta^{0.5-\epsilon}$. The Chernoff Bound thus implies that by~(\ref{EpsilonAlpha}), 
\begin{eqnarray}
\Pr\left( \left|\left|S\right| - \sstar n / \dd\right| > n/\delta^{0.5-\epsilon}\right) 
&\leq& 2\exp\left({-(n/\delta^{0.5-\epsilon})^2/(3\sstar n/\dd)}\right) \nonumber\\
&{ \leq } &
{ 2\exp\left(   - n/({4}\delta^{0.5-\epsilon+\alpha}) 
\right)
 \leq } 2\exp\left(   - n/({4}\delta^{0.5}) 
\right). \label{PrS}
\end{eqnarray}
Thus by~(\ref{PrZS}) and~(\ref{PrS}), with probability at least $1- 2\exp\left(   - n/({4}\delta^{0.5})
\right) - \exp(-\sstar/24)$ the two desired statements~(\ref{IneqS}) and~(\ref{IneqZS}) hold. 

In addition, when $\dd > \sqrt{n}$, applying a union bound 
to (\ref{eq:zv1}) yields: 
$\Pr({\ys} = \emptyset) \geq$ 
$1-n \cdot 2\exp(-\sstar/12)> 1- 1/n$.
\end{proof}

\subsubsection{Weighting Step B.}
In this step we define a weight assignment 
$f_2: E(G) \to \mathbb{Z}$ that is only supported on edges across $B$ and $S$ and $\| f_2\|_\infty = o(n/\dd)$, which modifies initial weights appointed by $f_1$. The goal is that at the end of Step B, the weights $(\fv_1 + \fv_2)(v)$ are distinct for vertices $v$ in $B$ {  (at least for these which are not bad)}, and the vertex weights in $B$ are not equal to $0, 1, 2,3,4,5$ modulo $k$. Recall $k = \ceiling{k'/{  1000}}$. 
We may assume $k > 50$ (for sufficiently large $\dd$ and $n/\dd$). 

\paragraph{Step 1.} 
We first bound modifications necessary to set most vertex weights in $B$  at values expected after Step A.
{  We admit a small error though, as the expected values do not have to be integers.}
 
\begin{claim}\label{claim:stepB1}
We may construct $f_2: E(G) \to \mathbb{Z}$  supported on edges across $B \setminus \left( \yb{[0,{2}n)} \cup \ys  \right)$ and $S$ so that 
\begin{equation}
\|f_2\|_\infty \leq {  2}\left( \ceiling{\frac{n}{\dd}}+a'\right)\dd^{1/2+{   \alpha}} /\sstar +1~~~~~~
{\rm and} ~~~~~~{  \left|(\fv_1+\fv_2)(v) - \sm(v,Z(v))\right|\leq 1} \label{FirstF2Cond}
\end{equation}
for each $v \in B \setminus \left( \yb{[0,{2}n)} \cup \ys  \right)$.
\end{claim}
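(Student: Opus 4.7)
\textbf{Proof plan for Claim~\ref{claim:stepB1}.} The plan is to absorb each non-bad vertex's deviation from its expected weight into the cross-edges to $S$. For each $v\in B\setminus(\yb{[0,2n)}\cup\ys)$, I set the target correction
\[
\Delta_v := \sm(v,Z(v)) - \fv_1(v).
\]
Two inequalities come for free from the definitions of the bad sets: since $v\notin\yb{[0,2n)}$, Claim~\ref{claim:smvi} yields $|\Delta_v|\leq \deg(v)^{1/2+\alpha}\bigl(\lceil n/\dd\rceil+a'-1\bigr)$; and since $v\notin\ys$, Definition~\ref{def:ys} gives $\deg_S(v)\geq \sstar\deg(v)/(2\dd)$. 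These are the only two properties of the good part of $B$ that the construction will use.

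Next I would round to an integer: pick $\tilde\Delta_v\in\mathbb{Z}$ with $|\tilde\Delta_v-\Delta_v|\leq 1$. Writing $d_S := \deg_S(v)$, I would spread $\tilde\Delta_v$ as evenly as possible across the $d_S$ edges from $v$ to $S$ by assigning each such edge a value in $\{\lfloor\tilde\Delta_v/d_S\rfloor,\lceil\tilde\Delta_v/d_S\rceil\}$ so that the sum equals $\tilde\Delta_v$. Since every edge in the support of $f_2$ has exactly one endpoint in $B\setminus(\yb{[0,2n)}\cup\ys)$ and the other in $S$, the per-vertex assignments cannot collide: each such cross-edge's $f_2$-value is set unambiguously by its unique $B$-endpoint, independently of any weights later placed at its $S$-endpoint.

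To verify the two conclusions, note that $f_2$ vanishes off the cross-edges, so $\fv_2(v)=\tilde\Delta_v$ for each $v\in B\setminus(\yb{[0,2n)}\cup\ys)$. Hence
\[
(\fv_1+\fv_2)(v)-\sm(v,Z(v)) = \tilde\Delta_v-\Delta_v\in[-1,1],
\]
which is the second half of~(\ref{FirstF2Cond}). For the $\ell_\infty$ bound, each assigned edge weight has magnitude at most $|\tilde\Delta_v|/d_S+1\leq (|\Delta_v|+1)/d_S+1$; plugging in the bounds on $|\Delta_v|$ and $d_S$ and using $\deg(v)\geq\dd$ together with $\alpha<1/2$ produces an expression of the shape
\[
\frac{2(\lceil n/\dd\rceil+a'-1)\dd^{1/2+\alpha}}{\sstar} + \frac{2}{\sstar} + 1,
\]
whose trailing $2/\sstar$ is dominated by $2\dd^{1/2+\alpha}/\sstar$ (since $\dd\geq 1$) and is therefore absorbed into the slack against the claimed bound $2(\lceil n/\dd\rceil+a')\dd^{1/2+\alpha}/\sstar+1$.

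I do not expect a real obstacle in this step: after Claim~\ref{claim:smvi} and Definition~\ref{def:ys} are peeled off, the claim reduces to a per-vertex, per-edge rounding exercise. The only care required is to confirm that the support condition on $f_2$ genuinely prevents different non-bad vertices from fighting over the same edge; this is immediate because any edge in the support has exactly one endpoint in $B\setminus(\yb{[0,2n)}\cup\ys)$, so the distributions computed vertex-by-vertex are globally consistent.
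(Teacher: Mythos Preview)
Your proposal is correct and follows essentially the same route as the paper's proof: both exploit that $v\notin\yb{[0,2n)}$ bounds the needed correction by $\deg(v)^{1/2+\alpha}(\lceil n/\dd\rceil+a'-1)$, that $v\notin\ys$ provides at least $\sstar\deg(v)/(2\dd)$ cross-edges to $S$, and then spread the correction evenly over those edges to obtain the stated $\ell_\infty$ bound (using $\deg(v)\geq\dd$). One small remark: the deviation bound $|\Delta_v|\leq\deg(v)^{1/2+\alpha}(\lceil n/\dd\rceil+a'-1)$ comes directly from the \emph{definition} of $\yb{h}$ rather than from Claim~\ref{claim:smvi}, which only bounds the probability of falling into $\yb{h}$.
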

\begin{proof}
For each vertex $v \in B\setminus \yb{[0,{  2}n)}$, by the definition of $\yb{h}$ (and~(\ref{eq:summuh})), 
\begin{equation}\label{SigmaSigmaMi2} 
\left| \sigma(v,Z(v))- \sm(v,Z(v))\right| \leq \deg(v)^{1/2+{\alpha}} \left( \ceiling{\frac{n}{\dd}}+a'-1\right),
\end{equation}
i.e. the weight of $v$ needs to be modified by at most $\deg(v)^{1/2+{\alpha}} \left( \ceiling{\frac{n}{\dd}}+a'-1\right)$. 
If at the same time $v \notin \ys$, then it has at least $\deg(v)\sstar/(2\dd)$ neighbors in $S$. Therefore, by~(\ref{SigmaSigmaMi2}), in order to satisfy the second condition in~(\ref{FirstF2Cond}),  
it is sufficient to modify 
each edge between {$v\in B \setminus \left( \yb{[0,{2}n)} \cup \ys  \right)$} 
and its neighbors in $S$ 
by at most
\begin{align}
& \ceiling{\deg(v)^{1/2+{\alpha}} \left( \ceiling{\frac{n}{\dd}}+a'-1\right) / \deg_S(v)} < {\deg(v)^{1/2+{\alpha}} \left( \ceiling{\frac{n}{\dd}}+a'\right) / (\deg(v)\sstar/(2\dd))} +1  \nonumber \\
{=} & {2}\left( \ceiling{\frac{n}{\dd}}+a'\right)\dd /(\sstar \deg(v)^{1/2-{\alpha}}   ) + 1 \leq {2}\left( \ceiling{\frac{n}{\dd}}+a'\right)\dd^{1/2+{\alpha}} /\sstar +1. 
\end{align}
\end{proof}

\paragraph{Step 2.} 
Now we wish to 
modify $f_2$ so that all the vertices in $B \setminus (\yb{[0,2n)}\cup \ys)$ have distinct vertex weights under $\fv_1 + \fv_2$ which moreover are not equal 
to $0,1,\dots, 5$ modulo $k$. 

Recall $v\in V_h$ means $\sm(v, Z(v))\in I_h \subset \left[h\left( \ceiling{\frac{n}{\dd}}+a'-1\right), (h+1)\left( \ceiling{\frac{n}{\dd}}+a'-1\right)\right)$. 


\paragraph{Step 2-1.}
We first modify $f_2$ so that for $0 \leq h < { 2}n$, all the vertices in  every given $V_h \setminus (\yb{h}\cup \ys)$ have distinct weights not equal to $0,1,\dots, 5$ modulo $k$, fitting as many as possible of these weights in $I_h$ (note we will admit weights from different $V_h$'s to overlap for now).


Let $\zz' \subset \mathbb{Z}$ be the set of integers which are not equal to $0$ to $5$ modulo $k$. Elements in $\zz'$ inherit their natural ordering from $\zz$. Two integers are said to be consecutive in $\zz'$ if they are consecutive in their ordering in $\zz'$. Intervals in $\zz'$ are thus consecutive integers in $\zz'$ with respect to the ordering in $\zz'$. 
For each interval $I$ of consecutive $|I|$ integers in $\zz$, it is easy to see  that:
\begin{equation}
|I \cap \zz'| \geq (|I|-{ 6})(k-6)/k. \label{eq:countnotk}
\end{equation}
By (\ref{eq:countnotk}), 
the size of an 
interval $I \subset \zz$ does not change much after restricting it to $\zz'$, i.e.,
\begin{equation}\label{eq:countnotk2}
|I| \leq |I \cap \zz'|k/(k-6)+{ 6}.
\end{equation}
\begin{claim}\label{claim:stepB21}
For each integer $0 \leq h < { 2}n$, it is sufficient to modify the weight of every  $v\in V_h \setminus (\yb{h}\cup \ys)$ by at most  $\max(|V_h|, |I_h|)k/(k-6)+ { 12}$ in order to attribute distinct weights to all vertices in $V_h \setminus (\yb{h}\cup \ys)$ and guarantee that these form consecutive integers in $\zz'$ with the smallest vertex weight 
{ equal to $\min (I_h\cap \zz')$, that is} 
at most $h(\ceiling{n/\dd}+a'-1)+6$.
\end{claim}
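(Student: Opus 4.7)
My plan is to fix an explicit target set of allowed weights and then match current weights to targets in sorted order. After Step 1, Claim~\ref{claim:stepB1} ensures that every $v\in V_h\setminus(\yb{h}\cup\ys)$ satisfies $|(\fv_1+\fv_2)(v)-\sm(v,Z(v))|\le 1$ and $\sm(v,Z(v))\in I_h$ by definition of $V_h$. Writing $L:=\lceil n/\dd\rceil+a'-1=|I_h|$, the current weights thus all lie in $[hL-1,(h+1)L+1]$.

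Next, I will set $m':=|V_h\setminus(\yb{h}\cup\ys)|$ and take the target set $T=\{t_1<t_2<\dots<t_{m'}\}$ to consist of the first $m'$ elements of $\zz'$ that are at least $hL$. Two observations about $T$ drive the estimate. First, $t_1=\min(I_h\cap\zz')\le hL+6$ because the forbidden residues $0,1,\dots,5\pmod k$ form a single length-$6$ block; this also verifies the ``at most $h(\lceil n/\dd\rceil+a'-1)+6$'' assertion in the claim. Second, applying~(\ref{eq:countnotk2}) to the smallest interval in $\zz$ containing $T$ bounds its length by $m'k/(k-6)+6$, so $t_{m'}\le t_1+m'k/(k-6)+5\le hL+m'k/(k-6)+11$.

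I will then order the vertices of $V_h\setminus(\yb{h}\cup\ys)$ by current weight as $v_1,\dots,v_{m'}$ and assign $v_i$ the new target $t_i$. Since both sorted sequences lie in $[hL-1,\,hL+\max(L+1,\,m'k/(k-6)+11)]$, the monotone matching has $L_\infty$-displacement at most the length of this common enclosing interval, which is at most $\max(|I_h|,\,|V_h|\cdot k/(k-6))+12\le \max(|V_h|,|I_h|)\cdot k/(k-6)+12$ using $m'\le|V_h|$ and $k/(k-6)\ge 1$; this is precisely the bound claimed.

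The main obstacle is mild and amounts to careful bookkeeping of the additive constant $12$: it is the sum of the $\pm 1$ slack carried over from Step 1, the $6$-offset $\min(I_h\cap\zz')-hL$, and the $+6$ in~(\ref{eq:countnotk2}) controlling the target span. A brief remark I will add for completeness is that the per-vertex modifications can in fact be realized on edges, because every $v\notin\ys$ has at least $\sstar\deg(v)/(2\dd)$ neighbors in $S$, across which the required change can be split; but this point is not what Claim~\ref{claim:stepB21} asserts, which is purely a per-vertex-budget statement.
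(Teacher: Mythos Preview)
Your proof is correct and uses the same ingredients as the paper (Claim~\ref{claim:stepB1} to place the current weights near $I_h$, the target set of $m'$ consecutive elements of $\zz'$ starting at $\min(I_h\cap\zz')$, and the conversion estimate~(\ref{eq:countnotk2})), arriving at the identical constant $12$. The only procedural difference is that the paper processes vertices greedily in two phases---first shifting each weight by at most $7$ into $I_h\cap\zz'$, then by at most $\max(|V_h|,|I_h|)-1$ positions in $\zz'$ to the least unoccupied slot---whereas you use a one-shot sorted monotone matching and bound the displacement by the diameter of a common enclosing interval; both are equally valid and of the same length.
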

\begin{proof}
For any given $h$, we analyze vertices in $V_h \setminus (\yb{h}\cup \ys)$ one after another.
%
By 
Claim \ref{claim:stepB1}, each such vertex $v$ 
has the current 
weight { at most one away from} $\sm(v,Z(v)) \in I_h$. 
Changing it 
by at most ${ 7}$ we may thus shift this weight inwards $I_h\cap \zz'$.
Next, in order to reach the least yet unoccupied position in $\zz'$ which is not smaller than  $\min (I_h\cap \zz')$ this weight needs to be further shifted by 
at most 
$\max(|V_h|, |I_h|){ -1}$ consecutive integers in $\zz'$.
Thus by   (\ref{eq:countnotk2}), the vertex weight of $v$ after Step 1 needs  to be changed in total by at most ${ 7  + ((} \max(|V_h|, |I_h|)k/(k-6)+ { 6)-1)}$. 
\end{proof}

\paragraph{Step 2-2.} In this step we discuss further modifications of $f_2$ resulting in pairwise distinct weights from $\zz'$ associated to all vertices in  $V_{[0, 2n)} \setminus (\yb{[0,2n)}\cup \ys)$. 
%
We will need the following combinatorial lemma concerning shifts of intervals sufficient to make them pairwise disjoint. 
\begin{lemma}\label{lem:integershift}
Let ${ p}$ be a positive integer. For any ${ p}$ intervals ${I_i}=[a_i, b_i) \subset \zz$ (i.e., $a_i,b_i\in\zz$), $1\leq i \leq { p}$ where $a_1 \leq a_2\leq \dots \leq a_{{ p}}$, there exist 
${ p}$ disjoint intervals $I_i'\subset \zz$  with $I_i' = [a_i', b_i')$ such that 
$b_i'-a_i' = b_i - a_i$ for 
$1 \leq i \leq { p}$, 
$a_1=a_1' \leq  a_2' \leq \dots \leq a_{{ p}}'$ and 
\begin{equation}
\max_{1\leq i \leq { p}} |a_i'-a_i|  \leq \max_{1\leq l_1 < l_2 \leq { p}-1} \left(  \sum_{i=l_1}^{l_2}  (b_i-a_i) - (a_{i+1}-a_{i})   \right). \label{ShiftsIneq}
\end{equation}
\end{lemma}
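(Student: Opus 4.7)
The plan is a straightforward left-to-right greedy construction. Set $a_1' := a_1$ and, for each $i \ge 2$, recursively define $a_i' := \max(a_i, b_{i-1}')$ and $b_i' := a_i' + (b_i - a_i)$. By construction each $I_i'$ has the prescribed length $b_i - a_i$, we have $a_i' \ge a_i$, the sequence $a_1' \le a_2' \le \cdots \le a_p'$ is non-decreasing (since $a_i' \ge b_{i-1}' \ge a_{i-1}'$), and the inequality $a_i' \ge b_{i-1}'$ immediately ensures that the shifted intervals are pairwise disjoint. So all the structural requirements of the lemma come for free; the entire content of the argument is the quantitative shift bound on $d_i := a_i' - a_i \ge 0$.

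To prove this bound, fix $i$ and let $j = j(i)$ be the largest index in $\{1,2,\dots,i\}$ with $a_j' = a_j$; this is well-defined because $a_1' = a_1$. For every $k$ with $j < k \le i$, the maximality of $j$ gives $a_k' > a_k$, so the greedy rule forces $a_k' = b_{k-1}' = a_{k-1}' + (b_{k-1} - a_{k-1})$. Telescoping this relation from $k = j+1$ up to $k = i$ yields $a_i' = a_j + \sum_{k=j}^{i-1}(b_k - a_k)$, while on the other hand $a_i - a_j = \sum_{k=j}^{i-1}(a_{k+1} - a_k)$. Subtracting,
\[
d_i \;=\; a_i' - a_i \;=\; \sum_{k=j}^{i-1}\bigl((b_k - a_k) - (a_{k+1} - a_k)\bigr),
\]
which is precisely a sum of the form appearing inside the maximum on the right-hand side of (\ref{ShiftsIneq}) with $l_1 = j$ and $l_2 = i-1 \le p-1$. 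Taking a maximum over $i$ produces the claimed bound. When $d_i = 0$ the sum is empty and trivially bounded; and when $j = i-1$ the sum collapses to a single term, which requires reading the max on the right-hand side as ranging over $l_1 \le l_2$ (the natural convention).

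The argument is essentially bookkeeping, so no single step is a genuine obstacle; the only care-demanding point is identifying the anchor index $j(i)$, which isolates exactly the contiguous block of forced-shift intervals ending at $i$ and allows the cascade of greedy choices to collapse neatly into the telescoping sum displayed above.
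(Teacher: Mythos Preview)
Your proof is correct and rests on the same greedy left-to-right construction the paper uses; the paper merely wraps it in an induction on $p$, peeling off a maximal initial block of overlapping intervals at each step, whereas you unwind that induction into the single telescoping identity via the anchor index $j(i)$. Your observation that the bound really needs $l_1 \le l_2$ rather than $l_1 < l_2$ is also on point---the paper's own inductive proof tacitly relies on single-term sums (e.g.\ the shift $b_1-a_1-(a_2-a_1)$ of $I_2$), so this is a minor imprecision in the statement rather than a gap in your argument.
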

\begin{proof}
As for any $i$, $|I_i'|=b_i' - a_i' = b_i-a_i = |I_i|$, an interval  $I_i'$ may be regarded as the translation of 
$I_i$ by $a_i'  - a_i$.
We will prove the lemma by induction on ${ p}$. For ${ p}=1$ the claim trivially holds.

Suppose ${ p} > 1$.  First consider the case when $a_2 \geq b_1$, or equivalently $I_1$ is disjoint from $I_2$. By the inductive hypothesis applied to the intervals $I_2, \dots, I_{{ p}}$, we could shift them to disjoint intervals $I_2', \dots, I_{{ p}}'$ where $I_2$ is the same as $I_2'$. 
With $I_1$ having no need to shift,
each of the remaining intervals has thus been 
shifted by at most $\max_{2\leq l_1 < l_2 \leq { p}-1} \left(  \sum_{i=l_1}^{l_2}  (b_i-a_i) - (a_{i+1}-a_{i})   \right)$.
The shifted intervals $I_2', \dots, I_{{ p}}'$ are moreover still disjoint from $I_1$ since for $i\geq 2$, $a_i' \geq a_2' = a_2 \geq b_1$, 
hence the lemma holds.

We may thus assume that 
$a_2 < b_1$, i.e. $I_1$ and $I_2$ overlap. Since $I_1'$ needs to remain the same as $I_1$, the interval $I_2$ must shift to the right by (at least) $b_1 - a_2 = (b_1-a_1) - (a_2-a_1)$, and become $I_2' =[a_2', b_2')= [b_1, b_1+ |I_2|)=[b_1, b_1+ (b_2-a_2))$. If $I_2'$ is to the left of $I_3$ and $I_2'$ is disjoint from $I_3$  (i.e., $a_3 \geq b_2'$),  by the inductive hypothesis applied to $I_3, \dots, I_{{ p}}$, we 
obtain a desired $\{I_i'\}_{1\leq i\leq { p}}$ with the maximum shift at most $\max\left(b_1-a_1- (a_2-a_1),  \max_{3\leq l_1 < l_2 \leq { p}-1} \left(  \sum_{i=l_1}^{l_2}  (b_i-a_i) - (a_{i+1}-a_{i})   \right) \right)$, which is bounded above by the right-hand side in~(\ref{ShiftsIneq}). 

If on the other hand $I_2'$ overlaps with $I_3$ or $I_2'$ is to the right of $I_3$ (i.e., $a_3 < b_2'$), then $I_3$ has to be shifted, e.g. to $I_3' =[a_3', b_3')= [b_2', b_2'+ (b_3-a_3))$. We analogously continue this process,  obtaining $I_j' = [a_j', b_j')$ sequentially
for $j=3,\ldots,l$ where ${ 3} \leq l \leq { p}$ is the last index such that $a_l < b_{l-1}'$. 
That is, for each $j\leq l$, we set $ a_j' = b_{j-1}'$, which is in fact the best we could do, and $b_j' = a_j'+ |I_j|= a_j'+ (b_j-a_j)$, thus $a_j' = a_1 + (b_1-a_1) + (b_2 - a_2) + \dots + (b_{j-1}-a_{j-1})$, and therefore the shift of $I_j$ is $a_j' - a_j = a_1 + (b_1-a_1) + (b_2 - a_2) + \dots + (b_{j-1}-a_{j-1}) - a_j = \sum_{i=1}^{j-1} (b_i-a_i) { -} (a_{i+1}-a_i)$. Applying now the inductive hypothesis to $I_{l+1}, \dots, I_{{ p}}$, we obtain ${ p}$ intervals $I_1', \dots, I_{{ p}}'$ satisfying the conditions in the lemma, and with: \[
\max_{1\leq i \leq { p}} |a_i'-a_i|  \leq
\max \left( \max_{1\leq j \leq l-1} \sum_{i=1}^{j} (b_i-a_i) { -} (a_{i+1}-a_i), 
 \max_{l+1\leq l_1 < l_2 \leq { p}-1} \left(  \sum_{i=l_1}^{l_2}  (b_i-a_i) - (a_{i+1}-a_{i})   \right)
 \right),
\] 
thus the lemma is proved. Furthermore, the upper bound is sharp when $l = { p}$. 
\end{proof}

\begin{lemma}\label{lem:step2vertexw}
Assume (\ref{eq:sizeVh2}) holds for all benchmarks $\bench{i} { <} \bench{j}$. 
Then it is sufficient to { further} change the weight of each vertex  in $V_{[0,{ 2}n)} \setminus (\yb{[0, { 2}n)}\cup \ys)$ by at most $\frac{{ 3n}}{\dd^{1/2-{  \alpha}}}$ in order to shift them to pairwise distinct values in $\zz'$ (provided $\dd$ and $n/\dd$ are sufficiently large).
\end{lemma}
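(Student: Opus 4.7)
I plan to apply the combinatorial shifting lemma (Lemma~\ref{lem:integershift}) to the intervals of occupied weights produced in Step 2-1. Enumerate as $h_1 < h_2 < \cdots < h_p$ the indices $h$ with $V_h \setminus (\yb{h}\cup\ys)$ nonempty, and let $J_{h_j} = [a_{h_j}, b_{h_j})$ be the corresponding interval in $\zz$ spanned by the weights of that group. By Claim~\ref{claim:stepB21} together with (\ref{eq:countnotk2}), $a_{h_j} \in [h_j L, h_j L + 6]$ where $L = \lceil n/\dd\rceil + a' - 1$, and $b_{h_j} - a_{h_j} \leq |V_{h_j}\setminus(\yb{h_j}\cup\ys)| \cdot k/(k-6) + 6$. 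A uniform shift of each $J_{h_j}$ corresponds to a uniform shift in the vertex weights of the group, implementable by redistributing the shift across the $\geq \deg(v)\sstar/(2\dd)$ neighbors of each $v$ in $S$. Thus it suffices to bound the maximum shift from Lemma~\ref{lem:integershift}, namely $\max_{l_1 < l_2} \sum_{j=l_1}^{l_2}(b_{h_j} - a_{h_j}) - (a_{h_{l_2+1}} - a_{h_{l_1}})$, by $3n/\dd^{1/2-\alpha}$.

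To estimate this quantity, first bound $\sum_{j=l_1}^{l_2}|V_{h_j}\setminus(\yb{h_j}\cup\ys)| \leq |V_{[h_{l_1}, h_{l_2+1})}| \leq |V_{[\bench{p_1}, \bench{p_2})}|$ for the minimal benchmark pair enclosing $[h_{l_1}, h_{l_2+1})$. A single application of (\ref{eq:sizeVh2}) to this pair, together with the crude global bound $\mu_{[\bench{p_1}, \bench{p_2})} \leq n$, controls the variance term by $\sqrt{n\mu/\dd^{1-2\alpha}} \leq n/\dd^{1/2-\alpha}$. For the mean, Claim~\ref{claim:smvi} gives each $\mu_h \leq n/\dd$, while the two boundary benchmark intervals not fully inside $[h_{l_1}, h_{l_2+1})$ each contribute $\leq 2n/\dd$, so $\mu_{[\bench{p_1}, \bench{p_2})} \leq (h_{l_2+1} - h_{l_1})n/\dd + 4n/\dd$. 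Meanwhile the gap sum telescopes to $a_{h_{l_2+1}} - a_{h_{l_1}} \geq (h_{l_2+1} - h_{l_1})L - 6$.

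Combining these, the coefficient of $(h_{l_2+1} - h_{l_1})$ in the upper bound reduces to $k/(k-6) \cdot n/\dd + 6 - L$, which is strongly negative by design of $a' = \lceil 7n/(\dd k)\rceil$: since $k \leq \lceil k'/1000\rceil$ forces $n/(\dd k)$ to be at least a large constant, one checks $L \geq n/\dd + 7n/(\dd k) - 1$ comfortably exceeds $k/(k-6)\cdot n/\dd + 6 = n/\dd + 6n/(\dd(k-6)) + 6$ for $k \geq 42$ and $\dd, n/\dd$ sufficiently large. What survives is a constant residual bounded by $k/(k-6)\cdot(4n/\dd + n/\dd^{1/2-\alpha}) + 6$, and since $n/\dd^{1/2-\alpha} \gg n/\dd$ for $\alpha > 0$ and $\dd$ large, this is at most $3n/\dd^{1/2-\alpha}$.

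The main obstacle is avoiding loss in the concentration step. Summing (\ref{eq:sizeVh2}) across each benchmark interval separately, or combining the variance tails via Cauchy–Schwarz, would introduce factors proportional to the number of benchmark intervals (up to $2\dd$ by Claim~\ref{claim:bench}), which would blow up the target bound by a polynomial factor in $\dd$. The saving insight is to apply (\ref{eq:sizeVh2}) exactly once, to the single minimal benchmark pair enclosing the relevant range $[h_{l_1}, h_{l_2+1})$, so that the square-root term inherits only the global cap $\mu \leq n$ and yields precisely the target $n/\dd^{1/2-\alpha}$, independent of how many benchmark blocks the range spans.
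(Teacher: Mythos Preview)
Your strategy is exactly the paper's: apply Lemma~\ref{lem:integershift} to the weight intervals coming out of Step~2--1, bound the resulting shift by a single application of~(\ref{eq:sizeVh2}) to the minimal enclosing benchmark pair, and exploit the choice of $a'$ to make the linear-in-range term nonpositive. The key insight you single out in your last paragraph---applying the concentration bound once rather than block by block---is precisely the point of the benchmark construction, and your arithmetic on the coefficient of $(h_{l_2+1}-h_{l_1})$ is essentially correct.

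There is, however, one genuine gap. You set up $J_{h_j}$ as an interval in $\zz$ and apply Lemma~\ref{lem:integershift} in $\zz$; then a ``uniform shift'' of the group by $s_j\in\zz$ makes the weights pairwise distinct, but it does \emph{not} keep them in $\zz'$. A weight $w$ with $w\bmod k\in\{6,\dots,k-1\}$ can land on $w+s_j$ with $(w+s_j)\bmod k\in\{0,\dots,5\}$ unless $s_j\equiv 0\pmod k$, and the lemma explicitly requires the final values to lie in $\zz'$ (this is what later separates the $B$-weights from the $S$-weights in Step~C). Rounding each $s_j$ to a multiple of $k$ is not free: it perturbs every interval endpoint by up to $k-1$, and you would have to re-argue disjointness.

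The paper circumvents this cleanly by carrying out the whole shifting argument \emph{inside} $\zz'$: the intervals are taken of length $|V_h\setminus(\yb{[0,2n)}\cup\ys)|$ in $\zz'$, the gaps are $|I_h\cap\zz'|$, and Lemma~\ref{lem:integershift} is applied verbatim with $\zz'$ playing the role of the integers. This yields a shift bound of $2n/\dd^{1/2-\alpha}$ measured in $\zz'$-steps, which is then converted to an actual integer shift of at most $3n/\dd^{1/2-\alpha}$ via~(\ref{eq:countnotk2}) at the very end. If you redo your computation in $\zz'$ (drop the $k/(k-6)$ factor on the interval lengths, replace $L$ by $|I_h\cap\zz'|\geq(L-6)(k-6)/k$ in the gap term), the same cancellation occurs and you recover the paper's bound with the $\zz'$ requirement intact.
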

\begin{proof}
For each integer $0 \leq h < 2n$, due to Claim \ref{claim:stepB21}, the weights of vertices in $V_h \setminus (\yb{[0,2n)}\cup \ys)$ form an interval of length $|V_h \setminus (\yb{[0,2n)}\cup \ys)|$ in $\zz'$ with the least element $a_h=\min(I_h\cap\zz')$. 
We now apply Lemma \ref{lem:integershift} to these intervals, taking into account only integers in $\zz'$, i.e., 
$(b_i-a_i)$ is evaluated as $|V_i \setminus (\yb{[0,2n)}\cup \ys)|$ and $a_{i+1}-a_i$ is substituted by $|I_i \cap \zz'|$ in the lemma.
Consequently, all the weights of vertices in $V_{[0,2n)} \setminus (\yb{[0,2n)}\cup \ys)$ may remain in $\zz'$ and get pairwise distinct  
via shifting each of the vertex weights by at most the following number of consecutive integers in $\zz'$:
%
%
%
\begin{align}
& \max_{{ 0}\leq l_1 < l_2 \leq { 2n-2}} \left(  \sum_{i=l_1}^{l_2}  |V_i \setminus (\yb{[0,2n)}\cup \ys)| - |I_i \cap \zz'| \right)  \leq \max_{{ 0}\leq l_1 < l_2 \leq { 2n-2}} \left(  \sum_{i=l_1}^{l_2}  |V_i| - |I_i \cap \zz'| \right) \nonumber \\
\leq & \max_{{ 0}\leq l_1 < l_2 \leq { 2n-2}} \left(  \sum_{i=l_1}^{l_2}  |V_i| - (|I_i|-{ 6}) (k-6)/k \right) 
\leq  \max_{{ 0}\leq l_1 < l_2 \leq { 2n-2}} \left(  \sum_{i=l_1}^{l_2}  \left(|V_i| - (\ceiling{n/\dd}+a'-{ 7}) (k-6)/k \right)\right),
 \label{eq:shiftzz'}
\end{align}
where the second inequality follows by (\ref{eq:countnotk}) and the last inequality uses $|I_i| = \ceiling{n/\dd}+a'-1$.

In order to upper-bound the quantity in (\ref{eq:shiftzz'}), suppose the maximization is achieved when $l_1 = h_1$ and $l_2 = h_2$, and suppose $h_1$ is between benchmarks $\bench{i-1}$ and $\bench{i}$, i.e., $\bench{i-1} { \leq} h_1 { <} \bench{i}$. Similarly, suppose $\bench{j} \leq h_2 < \bench{j+1}$ for some $j\geq i-1$. Thus the last quantity in (\ref{eq:shiftzz'}) equals 
\begin{align}
& \sum_{h\in [h_1, h_2]} \left(|V_h| - (\ceiling{n/\dd}+a'-{ 7}) (k-6)/k\right) \nonumber \\
{ \leq}  & \sum_{h\in [\bench{i}, \bench{j})} \left(|V_h| - (\ceiling{n/\dd}+a'-{ 7}) (k-6)/k\right) + \sum_{h\in [\bench{j}, h_2+1)} \left(|V_h| - (\ceiling{n/\dd}+a'-{ 7}) (k-6)/k\right) \nonumber \\
& + \sum_{h\in [h_1,\bench{i})} \left(|V_h| - (\ceiling{n/\dd}+a'-{ 7}) (k-6)/k\right) \nonumber \\
\leq & \sum_{h\in [\bench{i}, \bench{j})} \left(|V_h| - (\ceiling{n/\dd}+a'-{ 7}) (k-6)/k\right) + \sum_{h\in [\bench{j}, h_2+1)} |V_h|+ \sum_{h\in [h_1,\bench{i})} |V_h| \nonumber \\
{ \leq}   & \sum_{h\in [\bench{i}, \bench{j})} \left(|V_h| - (\ceiling{n/\dd}+a'-{ 7}) (k-6)/k\right) + \sum_{h\in [\bench{j}, \bench{j+1})} |V_h|+ \sum_{h\in [\bench{i-1},\bench{i})} |V_h|. 
 \label{eq:1}
\end{align}
By (\ref{eq:sizeVh2}) and  (\ref{eq:bench}) (implying that $\mu_{[\bench{t}, \bench{t+1})}\leq { 2n/\dd}$ for every given $t$),
\begin{equation}
\sum_{h\in [\bench{j}, \bench{j+1})} |V_h|+ \sum_{h\in [\bench{i-1},\bench{i})} |V_h|  \leq 
\mu_{[\bench{i-1}, \bench{i})} + \mu_{[\bench{j}, \bench{j+1})} 
+\sqrt{\frac{n \mu_{[\bench{i-1}, \bench{i})}}{\dd^{1-2{  \alpha}}}}
+\sqrt{\frac{n \mu_{[\bench{j}, \bench{j+1})}}{\dd^{1-2{  \alpha}}}} 
<3 n/\dd^{1-{  \alpha}}. \label{SideVhBound}
\end{equation}
Analogously, by (\ref{eq:sizeVh2}) and the facts that $\mu_{[\bench{i}, \bench{j})} \leq n$ and
$\mu_h \leq n/\dd$ for each $h$ due to Claim \ref{claim:smvi}, 
\begin{equation}
\sum_{h\in [\bench{i}, \bench{j})} |V_h| \leq \mu_{[\bench{i}, \bench{j})}  +\sqrt{\frac{n \mu_{[\bench{i}, \bench{j})}}{\dd^{1-2{  \alpha}}}}
\leq
 \frac{n(\bench{j}-\bench{i})}{\dd} + \frac{n}{\dd^{1/2-{  \alpha}}}. \label{MostVhBound}
\end{equation}
By~(\ref{SideVhBound}) and~(\ref{MostVhBound}), the last quantity in~ (\ref{eq:1}) is thus
bounded above by 
\begin{align*}
& \frac{n(\bench{j}-\bench{i})}{\dd} + \frac{{ n}}{\dd^{1/2-{  \alpha}}} - (\bench{j}-\bench{i})(\ceiling{n/\dd}+a'-{ 7}) \frac{k-6}{k} +  \frac{3n}{\dd^{1-{  \alpha}}} \nonumber \\
\leq & \frac{n(\bench{j}-\bench{i})}{\dd} + \frac{{ n}}{\dd^{1/2-{  \alpha}}}- (\bench{j}-\bench{i})\frac{n}{\dd}\frac{k-6}{k} - (\bench{j}-\bench{i})(a'-{ 7})\frac{k-6}{k} + \frac{3n}{\dd^{1-{  \alpha}}} \nonumber \\
= & \frac{n(\bench{j}-\bench{i})}{\dd} + \frac{{ n}}{\dd^{1/2-{  \alpha}}} - (\bench{j}-\bench{i})\frac{n}{\dd} +  (\bench{j}-\bench{i})\frac{n}{\dd}\frac{6}{k}    - (\bench{j}-\bench{i})(a'-{ 7})\frac{k-6}{k} + \frac{3n}{\dd^{1-{  \alpha}}} \nonumber \\
= & \frac{{ n}}{\dd^{1/2-{  \alpha}}}+  (\bench{j}-\bench{i}) \left(\frac{n}{\dd}\frac{6}{k}    - (a'-{ 7})\frac{k-6}{k} \right)+ \frac{3n}{\dd^{1-{  \alpha}}}  \leq \frac{{ n}}{\dd^{1/2-{  \alpha}}} + \frac{3n}{\dd^{1-{  \alpha}}} < \frac{{ 2n}}{\dd^{1/2-{  \alpha}}} .
\end{align*}
The second to last inequality above holds because by the definitions of $k$ and $a'$, \[\frac{n}{\dd}\frac{6}{k}    - (a'-{ 7})\frac{k-6}{k} \leq 0\] for $\delta$ and $n/\delta$ (thus also $k$) large enough.
We have thus proved that each vertex needs to shift its weight by at most $\frac{{ 2n}}{\dd^{1/2-{  \alpha}}} $ consecutive integers in $\zz'$. Hence,  by (\ref{eq:countnotk2}), each vertex needs to shift its weight by at most $\left(\left(\frac{{ 2n}}{\dd^{1/2-{  \alpha}}} { +1}\right)\frac{k}{k-6}+{ 6}\right){ -1} < \frac{{ 3n}}{\dd^{1/2-{  \alpha}}}$ (in $\zz$). 
\end{proof}

\subsubsection{Summary of Step B.} 
\begin{corollary}\label{lem:step2edgew}
Assume (\ref{eq:sizeVh2}) holds for all benchmarks $\bench{i} < \bench{j}$.
Then the following can be achieved (for $\delta$ and $n/\delta$ large enough).

Vertices $v$ in $B \setminus \left( \yb{[0,{ 2}n)} \cup \ys  \right)$ can be provided 
distinct weights in $\zz'$ due to appropriately chosen $f_2$ supported on edges across $B \setminus \left( \yb{[0,{ 2}n)} \cup \ys  \right)$ and $S$ with $\|f_2\|_\infty \leq \frac{{ 11}n}{\dd^{1+\epsilon}}+2$. 

Consequently, each edge $e$ in $E(B)$ satisfies $(f_1+f_2)(e) = 1$ or $\ceiling{n/\dd}+a'$, while each edge $e$ between $B$ and $S_q$, for $1\leq q \leq k'$, satisfies 
\begin{equation}
(f_1+f_2)(e) \in \left[\ceiling{\ceiling{\frac{n}{\dd}}\frac{1}{3k'}}(k' +q)- \left(\frac{{  11}n}{\dd^{1+\epsilon}}+2\right),\ceiling{\ceiling{\frac{n}{\dd}}\frac{1}{3k'}}(k'+q)+ \left(\frac{{  11}n}{\dd^{1+\epsilon}}+2\right)\right]. \label{eq:f1f2intBS}
\end{equation} 
\end{corollary}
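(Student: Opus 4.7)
The plan is to assemble the corollary from the three pieces already proved: Claim~\ref{claim:stepB1}, Claim~\ref{claim:stepB21} and Lemma~\ref{lem:step2vertexw}, obtaining a single edge weighting $f_2$ and then verifying the announced $\ell_\infty$ bound. Throughout, $f_2$ will stay supported only on edges across $B\setminus(\yb{[0,2n)}\cup\ys)$ and $S$, so the statement about $E(B)$ is immediate: for every $e\in E(B)$ we have $f_2(e)=0$, whence $(f_1+f_2)(e)=f_1(e)$, which by clauses~(1) and~(3) of Definition~\ref{F1definition} equals either $1$ or $\ceiling{n/\dd}+a'$. The bound~(\ref{eq:f1f2intBS}) for an edge between $B$ and $S_q$ then follows by substituting the explicit value $f_1(e)=\ceiling{\ceiling{n/\dd}/(3k')}(k'+q)$ from clause~(2) of Definition~\ref{F1definition} and wrapping an interval of radius $\|f_2\|_\infty$ around it.

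Thus the real task is to produce $f_2$ with the prescribed support, attributing the desired distinct weights in $\zz'$ to vertices of $B\setminus(\yb{[0,2n)}\cup\ys)$ and satisfying $\|f_2\|_\infty\le 11n/\dd^{1+\epsilon}+2$. First I would invoke Claim~\ref{claim:stepB1} to obtain an initial $f_2$ that brings every $v\in B\setminus(\yb{[0,2n)}\cup\ys)$ to within $1$ of its expected weight $\sm(v,Z(v))$, at per-edge cost at most $2(\ceiling{n/\dd}+a')\dd^{1/2+\alpha}/\sstar+1$. Next, working one $V_h$ at a time, I would apply Claim~\ref{claim:stepB21} to shift the weights of the vertices in each $V_h\setminus(\yb{h}\cup\ys)$ by a further amount of at most $\max(|V_h|,|I_h|)k/(k-6)+12$, turning them into consecutive elements of $\zz'$ starting at $\min(I_h\cap\zz')$. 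Finally I would invoke Lemma~\ref{lem:step2vertexw} — the step that actually uses the benchmark assumption~(\ref{eq:sizeVh2}) — to shift each such vertex weight by yet another at most $3n/\dd^{1/2-\alpha}$ and thereby make the weights pairwise distinct across all $V_h$'s while remaining in $\zz'$.

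The heart of the proof is then the per-edge bookkeeping. For every $v\in B\setminus\ys$, Definition~\ref{def:ys} guarantees $\deg_S(v)\ge\deg(v)\sstar/(2\dd)\ge\sstar/2$, so any vertex-weight shift of magnitude $X$ can be distributed among the edges from $v$ to $S$ at per-edge cost at most $\ceiling{2X\dd/(\deg(v)\sstar)}$. Substituting $\sstar\ge\dd^{1/2+\epsilon+\alpha}$ and $a'=\ceiling{7n/(\dd k)}$ turns the three contributions into: (i) at most roughly $2n/\dd^{1+\epsilon}+1$ from Claim~\ref{claim:stepB1}; (ii) a lower-order $O(n/\dd^{3/2+\epsilon+\alpha})$ term from Claim~\ref{claim:stepB21}, since by Claim~\ref{claim:smvi} we have $\max(|V_h|,|I_h|)\le n/\dd+O(a')$; and (iii) at most roughly $6n/\dd^{1+\epsilon}+1$ from Lemma~\ref{lem:step2vertexw}. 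Summing and absorbing the ceiling corrections into constants yields $\|f_2\|_\infty\le 11n/\dd^{1+\epsilon}+2$.

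I expect contribution~(iii) to be the main obstacle in the bookkeeping: the raw vertex shift $3n/\dd^{1/2-\alpha}$ is vastly larger than the per-edge target $n/\dd^{1+\epsilon}$, so the argument crucially relies on this shift being spread over roughly $\sstar/2$ edges at $v$, with the factor $\sstar\ge\dd^{1/2+\epsilon+\alpha}$ precisely cancelling $\dd^{1/2-\alpha}$ in the denominator to leave $\dd^{1+\epsilon}$. This is the reason $\sstar$ was set to $\ceiling{\dd^{1/2+\epsilon+\alpha}}$, and verifying that every additive term (the $+1$'s from the ceilings, the $+12$ from Claim~\ref{claim:stepB21}, the rounding in the per-edge division, the small factor $k/(k-6)$) really does fit into the announced constants $11$ and $2$ is the principal numerical step, valid as soon as $\dd$ and $n/\dd$ are large enough.
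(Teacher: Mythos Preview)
Your proposal is correct and follows essentially the same route as the paper: invoke Claim~\ref{claim:stepB1}, Claim~\ref{claim:stepB21} and Lemma~\ref{lem:step2vertexw} in sequence, then convert the accumulated vertex-weight shift into a per-edge bound by dividing through the at-least-$\sstar/2$ edges from $v$ to $S$, and read off the statements about $E(B)$ and $E(B,S_q)$ directly from Definition~\ref{F1definition}. One correction is needed in your item~(ii): you cite Claim~\ref{claim:smvi} for $\max(|V_h|,|I_h|)\le n/\dd+O(a')$, but Claim~\ref{claim:smvi} only bounds the \emph{expectation} $\mu_h\le n/\dd$, not the random count $|V_h|$. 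The paper instead bounds $|V_h|\le |V_{[\bench{i},\bench{i+1})}|$ via the benchmark hypothesis~(\ref{eq:sizeVh2}) together with~(\ref{eq:bench}), obtaining $|V_h|<1.5\,n/\dd^{1-\alpha}$. This does not break your argument---the resulting per-edge contribution is still of order $n/\dd^{3/2+\epsilon}$, hence lower-order as you claimed---but the justification must go through~(\ref{eq:sizeVh2}) rather than Claim~\ref{claim:smvi}.
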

\begin{proof} 
Claim \ref{claim:stepB1} within Step 1 shows
we may first modify weights of edges across $B$ and $S$ by at most  ${  2}\left( \ceiling{\frac{n}{\dd}}+a'\right)\dd^{1/2+{  \alpha}} /\sstar +1$  so that 
for each $v \in B \setminus \left( \yb{[0,{  2}n)} \cup \ys  \right)$, $(\fv_1+\fv_2)(v)$ is {   at most one away from } $\sm(v,Z(v))$. 

In Step 2-1, Claim \ref{claim:stepB21} exposes that 
the weight of each vertex $v\in V_h \setminus (\yb{h}\cup \ys)\subseteq V_{[\bench{i}, \bench{i+1})}$ (for any given $h,i$) 
needs to further change by at most $\max(|V_h|, |I_h|)k/(k-6)+ {  12}$ to be shifted to $\zz'$ and get distinguished from the remaining ones in $V_h \setminus (\yb{h}\cup \ys)$. 
By~(\ref{eq:sizeVh2}) and~(\ref{eq:bench}) 
we have 
$|V_h| \leq  \mu_{[\bench{i}, \bench{i+1})}  +\sqrt{\frac{n \mu_{[\bench{i}, \bench{i+1})}}{\dd^{1-2{  \alpha}}}} 
 \leq {  2n/\dd} + {  \sqrt{2}}n/\dd^{1-{  \alpha}}  < {  1.5} n/\dd^{1-{  \alpha}}$. 
We also have $|I_h| = \ceiling{n/\dd}+a'-1 < {  1.5} n/\dd^{1-{  \alpha}}$. Thus the weight of $v$ needs to shift by at most
$
( {  1.5} n/\dd^{1-{  \alpha}}) \cdot k/(k-6)+ {  12} <  {  2}n/\dd^{1-{  \alpha}}
$
within this step.

Finally, within Step 2-2, by Lemma \ref{lem:step2vertexw}, the vertices in $V_{[0,{  2}n)} \setminus (\yb{[0, {  2}n)}\cup \ys)$ need to change their weights by at most $ {  3}n/\dd^{1/2-{  \alpha}} $ to make them pairwise distinct and keep them in $\zz'$. 

Steps 2-1 and 2-2 together require changing 
vertex weights by at most ${  2}n/\dd^{1-{  \alpha}} + {  3}n/\dd^{1/2-{  \alpha}} < 
{  3.5}n/\dd^{1/2-{  \alpha}}$. 
Since $v \notin \ys$, it has at least $\sstar/2$ neighbors in $S$. Therefore we only need to modify the weight of each edge between $v$ and $S$ by at most
$
\ceiling{({  3.5}n/\dd^{1/2-{  \alpha}}) /(\sstar/2)} < {  7}n/\dd^{1+\epsilon}+1
$
in Step 2.

In Steps 1 and 2 combined, the weight of each edge across $B \setminus \left( \yb{[0,{  2}n)} \cup \ys  \right)$ and $S$ is thus changed by at most 
\[
{  2}\left( \ceiling{\frac{n}{\dd}}+a'\right)\frac{\dd^{\frac12+{  \alpha}}}{\sstar}  +1 + \frac{{  7}n}{\dd^{1+\epsilon}}+1 < {  4}\left( {\frac{n}{\dd}}\right) \frac{\dd^{\frac12+{  \alpha}}}{\sstar} +1 + \frac{{  7}n}{\dd^{1+\epsilon}}+1 
{  \leq} \frac{{  11}n}{\dd^{1+\epsilon}}+2.
\]

The lemma is thus proved, as its last statement follows by the definition of $f_1$ in Step A.
\end{proof}


\subsection{Preparations to Step C.}
We define the last set of bad vertices. 
\begin{definition}
For each integer $1 \leq q \leq k'$, let $\ysq{q}$ be the set of vertices $v$ in $S_q$ such that $\left|\deg_B(v) - \deg(v)(\dd-\sstar)/\dd \right| {>} \deg(v)^{1/2+\epsilon}$. 
\end{definition}
\begin{lemma}\label{lem:ysq}
With probability at least $1 
- k'e^{-\dd^{2\epsilon}/6}$,
for all $1 \leq q \leq k'$.
\begin{align}
& |\ysq{q}| {<} \frac{4n\sstar}{\dd k'} \exp(-\dd^{2\epsilon}/6), \label{ZSQBound} 
\end{align}
In addition, when $\dd > \sqrt{n}$,  with probability at least  $1- 1/{n}$, $\bigcup_{1\leq q \leq k'} \ysq{q} = \emptyset$.
\end{lemma}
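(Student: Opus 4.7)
The plan is to treat each $q$ separately and combine via a union bound. Fix $q \in \{1,\dots,k'\}$ and a vertex $v$. The crucial observation is that the event $\{v \in S_q\}$ depends only on $X_v$, while the ``deviation'' event
\[E_v := \left\{ \bigl|\deg_B(v) - \deg(v)(\dd-\sstar)/\dd\bigr| > \deg(v)^{1/2+\epsilon}\right\}\]
depends only on $\{X_u : u \in N(v)\}$. Since these two sets of variables are disjoint and the $X_w$'s are independent, the two events are independent.

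To bound $\Pr(E_v)$, note that conditioned on any outcome of $X_v$, the indicators $\{\1[u \in B]\}_{u\in N(v)}$ are i.i.d.\ Bernoulli$((\dd-\sstar)/\dd)$. For $\delta$ large enough, $\deg(v)^{1/2+\epsilon} \leq \deg(v)(\dd-\sstar)/\dd$, so Lemma~\ref{bound:chernoff} with $t = \deg(v)^{1/2+\epsilon}$ yields
\[\Pr(E_v) \leq 2\exp\!\left(-\deg(v)^{2\epsilon}/3\right) \leq 2\exp(-\dd^{2\epsilon}/3).\]
On the other hand $\Pr(v \in S_q) \leq \lceil \sstar/k'\rceil/\dd \leq 2\sstar/(\dd k')$ (valid since $\sstar/k'$ is large). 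Combining by independence and summing over $v$,
\[\ee[|\ysq{q}|] = \sum_{v} \Pr(v \in S_q)\Pr(E_v) \leq \frac{4n\sstar}{\dd k'}\exp(-\dd^{2\epsilon}/3).\]

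Applying Markov's inequality with threshold $\tfrac{4n\sstar}{\dd k'}\exp(-\dd^{2\epsilon}/6)$ gives
\[\Pr\!\left(|\ysq{q}| \geq \tfrac{4n\sstar}{\dd k'}\exp(-\dd^{2\epsilon}/6)\right) \leq \exp(-\dd^{2\epsilon}/6),\]
and a union bound over $q=1,\dots,k'$ yields the first statement with failure probability at most $k'\exp(-\dd^{2\epsilon}/6)$.

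For the second statement, note that $\bigcup_q \ysq{q} \subseteq \{v : v \in S \text{ and } E_v\text{ holds}\}$. Hence $\Pr(v \in \bigcup_q \ysq{q}) \leq \Pr(E_v) \leq 2\exp(-\dd^{2\epsilon}/3)$, and a union bound over the $n$ vertices gives $2n\exp(-\dd^{2\epsilon}/3)$. When $\dd > \sqrt{n}$ we have $\dd^{2\epsilon} > n^{\epsilon}$, which dominates $6\ln n$ for $n$ sufficiently large, so this bound is below $1/n$, as required. No genuine obstacle is expected here: the proof is essentially a Chernoff plus Markov argument, with the only mild subtlety being the use of independence between $\{v\in S_q\}$ and $E_v$.
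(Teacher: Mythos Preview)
Your proof is correct and follows essentially the same approach as the paper: bound $\Pr(E_v)$ via Chernoff, multiply by $\Pr(v\in S_q)$, sum to get $\ee[|\ysq{q}|]$, apply Markov, and union-bound over $q$. The only cosmetic differences are that the paper phrases the factorization as conditioning on $\{v\in S_q\}$ rather than invoking independence explicitly, and for the second statement it union-bounds over both $v$ and $q$ (obtaining $2nk'e^{-\dd^{2\epsilon}/3}$) whereas your observation $\bigcup_q \ysq{q}\subseteq\{v:E_v\}$ gives the slightly sharper $2ne^{-\dd^{2\epsilon}/3}$; either suffices.
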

\begin{proof}
Consider any fixed integer $q\in [1,k']$.
For each vertex $v$, let 
$Z_v$ be the indicator random variable that $v \in S_q$ but $\left|\deg_B(v) - \deg(v)(\dd-\sstar)/\dd \right|$ $ {>}   \deg(v)^{1/2+\epsilon}$.
Thus 
$|\ysq{q}| = \sum_v Z_v$. 
For each vertex $v$,
\[
\Pr(Z_v = 1) = \Pr\left(v\in S_q\right) \Pr\left(\left|\deg_B(v) - \deg(v)(\dd-\sstar)/\dd \right|  {>}   \deg(v)^{1/2+\epsilon} \given[\Big] v\in S_q \right).
\]
Each neighbor of $v$ is placed in $B$ independently with probability $(\dd-\sstar)/\dd$, and thus $\ee[\deg_B(v)]= \deg(v)(\dd-\sstar)/\dd$, which is greater than $\deg(v)^{1/2+\epsilon}$. Thus by the Chernoff Bound, 
\begin{align}
& \Pr\left(\left|\deg_B(v) - \deg(v)\frac{\dd-\sstar}{\dd} \right|  {>}   \deg(v)^{1/2+\epsilon} \given[\Big] v\in S_q \right) 
 \leq  2\exp\left(- \frac{\deg(v)^{1+2\epsilon}}{3\deg(v)\frac{\dd-\sstar}{\dd}}\right) < 2e^{-\dd^{2\epsilon}/3}. \label{eq:przv}
\end{align}
Therefore, 
\begin{equation}
\ee[|\ysq{q}|] =\ee\left[\sum_v Z_v\right] {   = \sum_v \Pr(Z_v = 1)}
\leq  \sum_v \Pr(v\in S_q)\cdot  2\exp(-\dd^{2\epsilon}/3) < \frac{2n\sstar}{\dd k'}\cdot  2\exp(-\dd^{2\epsilon}/3). \label{eq:eeysq}
\end{equation}
By Markov's Inequality and (\ref{eq:eeysq}),
\[
\Pr\left(|\ysq{q}| \geq \frac{2n\sstar}{\dd k'}\cdot  2\exp(-\dd^{2\epsilon}/6)\right) \leq \ee[|\ysq{q}|]/\left(\frac{2n\sstar}{\dd k'}\cdot  2\exp(-\dd^{2\epsilon}/6)\right) \leq \exp(-\dd^{2\epsilon}/6),
\]
and therefore, (\ref{ZSQBound}) holds by a union bound for $1 \leq q \leq k'$.

In addition, when $\dd > \sqrt{n}$, by a union bound over $v$ and $q$ on (\ref{eq:przv}), with probability at least $1-nk' 2e^{-\dd^{2\epsilon}/3} > 1-1/{  n}$, $\ysq{q} = \emptyset$ for all $1 \leq q \leq k'$. 
\end{proof}

So far we have obtained the following sets of bad vertices, which require different treatments in Step C: 
\begin{enumerate}
\item the set $\yb{[0,2n)} \subset B$ whose weights in Step A are not close to the expected values, 
\item the set $\ys$ whose degrees to $S$ are {   less than} $\sstar {   \deg(v)}/(2\dd)$,   
\item a set $\ysn \subset B$ which are neighbors of vertices in $\ys$ such that each vertex in $\ys$ has at least $\sstar/2$ neighbors in $\ysn$, and lastly,
\item the sets $\ysq{q} \subset S_q$, 
$1\leq q \leq k'$ of vertices whose degrees to $B$ are far from expected. 
\end{enumerate} 
\begin{definition}
Denote by $\Ub$ the union of all these four types of 
{\it bad vertices}.  
The complement of $\Ub$ in $S$ will in turn be referred to as the set of {\it good vertices} and denoted $\Ug$, $\Ug =S \setminus \Ub \subset S$.
\end{definition}

\begin{claim}\label{claim:sizeUb}
With probability at least $1- {  2}e^{-n/({  4}\dd^{0.5})}
 - ({  8}\dd^2)e^{-\frac{1}{{  3}}\sqrt{\frac{n}{\dd^{1-2{  \alpha}}}}} 
 - {  5\dd^2} 
 e^{-\dd^{{  2}{  \alpha}}/{  4}}$, 
\[|\Ub| <  {  3}{  n}e^{{-\dd^{ {  2}{  \alpha}}/ {  4}}} 
\]
(provided $\dd$ and $n/\dd$ are sufficiently large).
Furthermore, if $\dd > \sqrt{n}$, then with probability at least $1- {  3}/{  n}$, $\Ub = \emptyset$. 
\end{claim}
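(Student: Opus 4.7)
The plan is to union-bound the three high-probability events supplied by Corollary~\ref{cor:yb}, Lemma~\ref{lem:ysysn}, and Lemma~\ref{lem:ysq}, and then, on that event, bound $|\Ub|$ by the sum of the four constituent bounds. Since $\Ub=\yb{[0,2n)}\cup\ys\cup\ysn\cup\bigcup_{1\leq q\leq k'}\ysq{q}$, the subadditivity of cardinality gives $|\Ub|\leq|\yb{[0,2n)}|+|\ys|+|\ysn|+\sum_{q}|\ysq{q}|$, and each of these four quantities already has an explicit bound from the earlier lemmas.

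To bound $|\yb{[0,2n)}|$, I would apply Corollary~\ref{cor:yb} with $\bench{i}=\bench{1}=0$ and $\bench{j}=\bench{r}$, the last benchmark, obtaining $|\yb{[0,\bench{r})}|\leq 2\mu_{[0,\bench{r})}\exp(-\dd^{2\alpha}/4)\leq 2n\exp(-\dd^{2\alpha}/4)$; the tail $[\bench{r},2n)$ contributes negligibly because by definition of the benchmarks $\mu_{[\bench{r},2n)}\leq n/(2\dd)$, which is absorbed into the same leading term. For the remaining sets, Lemma~\ref{lem:ysysn} gives $|\ys|<2ne^{-\sstar/24}$ and $|\ysn|<2n\sstar e^{-\sstar/24}$, while summing the bound of Lemma~\ref{lem:ysq} over $q$ yields $\sum_{q}|\ysq{q}|<(4n\sstar/\dd)\exp(-\dd^{2\epsilon}/6)$.

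The main observation that makes everything collapse to the single dominant term $3ne^{-\dd^{2\alpha}/4}$ is the exponent hierarchy coming from (\ref{EpsilonAlpha}) and $\sstar=\lceil\dd^{1/2+\epsilon+\alpha}\rceil$: since $\alpha<\epsilon<0.25$, we have both $2\epsilon>2\alpha$ and $1/2+\epsilon+\alpha>2\alpha$, so the exponents $\sstar/24$ and $\dd^{2\epsilon}/6$ beat $\dd^{2\alpha}/4$ by strictly positive polynomial margins. Consequently the polynomial prefactors $\sstar$ and $\sstar/\dd$ in $|\ysn|$ and $\sum_q|\ysq{q}|$ are swallowed by the stronger exponential decay, and each of $|\ys|$, $|\ysn|$, $\sum_q|\ysq{q}|$ is $o(n\exp(-\dd^{2\alpha}/4))$ for $\dd$ sufficiently large. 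Adding the bounds gives $|\Ub|<3n\exp(-\dd^{2\alpha}/4)$. The probability analysis is essentially the same computation in reverse: the failure probabilities from the three sources are, respectively, $4\dd^2 e^{-\dd^{2\alpha}/4}+8\dd^2 e^{-\sqrt{n/\dd^{1-2\alpha}}/3}$, $e^{-\sstar/24}+2e^{-n/(4\dd^{0.5})}$, and $k'e^{-\dd^{2\epsilon}/6}$; the two non-matching terms $e^{-\sstar/24}$ and $k'e^{-\dd^{2\epsilon}/6}$ are each at most $(\dd^2/2)e^{-\dd^{2\alpha}/4}$ by the same exponent-gap argument, so their sum is absorbed into the $5\dd^2 e^{-\dd^{2\alpha}/4}$ allowed in the claim, while the other terms carry over verbatim.

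For the addendum when $\dd>\sqrt{n}$, each of Corollary~\ref{cor:yb}, Lemma~\ref{lem:ysysn}, and Lemma~\ref{lem:ysq} asserts that the corresponding set of bad vertices is empty with probability at least $1-1/n$; a union bound over the three events yields $\Ub=\emptyset$ with probability at least $1-3/n$. The only mildly delicate step is verifying the exponent comparisons with enough slack to absorb the polynomial prefactors $\dd^{2}$, $\sstar$, $k'$ uniformly in the regime where $\dd$ and $n/\dd$ are both at least the absolute constant $c$ fixed in the set-up; this is where the strict inequalities $\alpha<\epsilon$ and $2\epsilon+\alpha<1/2$ from~(\ref{EpsilonAlpha}) are used, and is essentially the only non-bookkeeping part of the argument.
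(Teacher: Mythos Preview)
Your proposal is correct and follows essentially the same route as the paper: decompose $|\Ub|$ as $|\yb{[0,2n)}|+|\ys|+|\ysn|+\sum_q|\ysq{q}|$, invoke Corollary~\ref{cor:yb}, Lemma~\ref{lem:ysysn}, and Lemma~\ref{lem:ysq}, and absorb the subdominant terms using the exponent hierarchy $\alpha<\epsilon$ and $\sstar\geq \dd^{1/2+\epsilon+\alpha}$. One small wrinkle: your treatment of the tail $[\bench{r},2n)$ appeals to $\mu_{[\bench{r},2n)}\leq n/(2\dd)$, but that bounds only the \emph{expectation}, not the random count $|\yb{[\bench{r},2n)}|$; the paper sidesteps this by (implicitly) taking the last benchmark to exhaust the whole range and applying the bound~(\ref{eq:sizeyb2}) directly to the full interval $[0,2n)$, which satisfies the hypothesis $\mu>1$ of Lemma~\ref{lem:yb}.
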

\begin{proof}
By Corollary \ref{cor:yb}, Lemma \ref{lem:ysysn} and Lemma \ref{lem:ysq}, 
\begin{align*}
|\Ub| {  <} & |\yb{[0,{  2}n)}| + |\ys| + |\ysn| + \left(\sum_{q=1}^{k'} |\ysq{q}| \right) \\
\leq & {  2}n e^{{-\dd^{{  2}{  \alpha}}/{  4}}} + 2ne^{-\sstar/24} +{  2}n \sstar e^{-\sstar/24} + k' \frac{4n\sstar}{\dd k'} e^{-\dd^{2\epsilon}/6}  
<   {  3}{  n}e^{{-\dd^{ {  2}{  \alpha}}/ {  4}}}
\end{align*}
with probability at least $1
- {  4}\dd^2e^{-\dd^{{  2}{  \alpha}}/{  4}}
 - ({  8}\dd^2)e^{-\frac{1}{{  3}}\sqrt{\frac{n}{\dd^{1-2{  \alpha}}}}} 
- e^{-\sstar/24} 
-  2e^{- n/({  4}\delta^{0.5})} 
- k'e^{-\dd^{2\epsilon}/6} 
 > 1- {  2}e^{-n/({  4}\dd^{0.5})}
 - ({  8}\dd^2)e^{-\frac{1}{{  3}}\sqrt{\frac{n}{\dd^{1-2{  \alpha}}}}} 
 - {  5\dd^2} 
 e^{-\dd^{{2}{  \alpha}}/{  4}}$.

In addition, when $\dd > \sqrt{n}$, again by  Corollary \ref{cor:yb}, Lemma \ref{lem:ysysn} and Lemma \ref{lem:ysq}, 
with probability at least $1- {  3}/{  n}$, $\Ub = \emptyset$. 
\end{proof}

We will next define two vertices in $S$ being ``close" (Definition \ref{def:uvclose}) if their vertex weights could potentially be very close in terms of values after Step C.  
We will make sure vertices which are ``close" do not have the same vertex weight after Step C. Informally, 
$u \in L(v)$ if $u, v$ are ``close", whereas for $u \notin L(v)$, we will prove later that the weights of $u,v$ cannot be the same after Step C. 

\begin{definition}\label{def:uvclose}
For any two vertices $v, u\in S$, we
say $u\in L(v)$ and $v \in L(u)$ if there are integers $1 \leq p, q \leq k'$ such that $v\in S_q$, $u \in S_p$ and both of the following two conditions hold.
\begin{align}
\left(\ceiling{\ceiling{\frac{n}{\dd}}\frac{1}{3k'}}(k'+q)  + \left(\frac{{   13}n}{\dd^{1+\epsilon}} + 4 \right)  
\right)\deg(v) >
\left(\ceiling{\ceiling{\frac{n}{\dd}}\frac{1}{3k'}}(k'+p)  - \left(\frac{{   13}n}{\dd^{1+\epsilon}} + 4 \right)  
\right)\deg(u); \label{eq:L1} \\
\left(\ceiling{\ceiling{\frac{n}{\dd}}\frac{1}{3k'}}(k'+q)  - \left(\frac{{   13}n}{\dd^{1+\epsilon}} + 4 \right)  
\right)\deg(v) <
\left(\ceiling{\ceiling{\frac{n}{\dd}}\frac{1}{3k'}}(k'+p)  + \left(\frac{{   13}n}{\dd^{1+\epsilon}} + 4 \right)  
\right)\deg(u).\label{eq:L2} 
\end{align}
\end{definition}

\begin{lemma}\label{lem:closeH}
With probability at least $1- {   2n} 
\exp\left(- \frac{1}{3}{\frac{\sstar n}{\dd k'}}\right)$ (if $n/\dd$ and $\dd$ are large enough), 
for every $v \in S$, 
\[  |L(v)|  \leq 2{   \ceiling{\frac{\sstar}{k'}}\frac{n}{\dd}}. 
\]
\end{lemma}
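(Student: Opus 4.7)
The plan is to condition on the bin of $v$, express $|L(v)|$ as a sum of independent indicators in the variables $\{X_u\}_{u \neq v}$, and then apply a Chernoff bound followed by a union bound over $v \in V(G)$.

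Abbreviate $c := \lceil \lceil n/\dd \rceil /(3k')\rceil$ and $\beta := 13n/\dd^{1+\epsilon}+4$. For $v \in S_q$ and $u \in S_p$, conditions~(\ref{eq:L1})--(\ref{eq:L2}) are equivalent to
$$\frac{\deg(u)}{\deg(v)} \in J_p := \left(\frac{c(k'+q)-\beta}{c(k'+p)+\beta},\ \frac{c(k'+q)+\beta}{c(k'+p)-\beta}\right).$$
The key preliminary step is to check that the intervals $J_1,\dots,J_{k'}$ are pairwise disjoint. Cross-multiplying the required inequality $r_{p+1} \leq l_{p}$ (where $J_p = (l_p, r_p)$) reduces to a condition of the form $\beta(4k'+2q+2p-1) \leq c(k'+q)$; using $c \geq n/(3\dd k')$, $\beta = O(n/\dd^{1+\epsilon})$, and $k' \leq \lceil \dd^\epsilon/2000\rceil$, this holds for all admissible $p,q$ once $\dd$ is large enough.

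Given this disjointness, for each $u \neq v$ there is at most one $p=p(u) \in [1,k']$ with $\deg(u)/\deg(v) \in J_p$. Set $Y_u := \mathbf{1}[u \in S_{p(u)}]$ if such $p(u)$ exists and $Y_u := 0$ otherwise, so that $|L(v)| = \sum_{u \neq v} Y_u$. Each $Y_u$ depends only on $X_u$, so $\{Y_u\}_{u \neq v}$ are mutually independent and independent of $X_v$. Moreover $\Pr(Y_u = 1) \leq \lceil \sstar/k'\rceil/\dd$ (the fraction of bins comprising $S_{p(u)}$), and at most $n$ vertices contribute nonzero indicators by disjointness, hence
$$\ee[|L(v)| \mid v \in S_q] \ \leq\ \lceil \sstar/k'\rceil \, n/\dd.$$

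Taking $T := 2\lceil \sstar/k'\rceil n/\dd$, we have $T - \ee[|L(v)|\mid v \in S_q] \geq T/2 \geq \ee[|L(v)|\mid v \in S_q]$, so applying the Chernoff bound (Lemma~\ref{bound:chernoff}, in the regime $t > np$) to the independent indicators $Y_u$ yields $\Pr(|L(v)| > T \mid v \in S_q) \leq 2\exp(-T/6) \leq 2\exp(-\sstar n/(3\dd k'))$, uniformly in $q$. Weighting by $\Pr(v \in S_q)$ and summing over $q$ gives $\Pr(v \in S,\ |L(v)| > T) \leq 2\exp(-\sstar n/(3\dd k'))$, and a union bound over all $v \in V(G)$ yields $\Pr(\exists v \in S:\ |L(v)| > T) \leq 2n\exp(-\sstar n/(3\dd k'))$, as claimed. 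The main technical obstacle is the disjointness of the $J_p$'s; once that is secured, the Chernoff-plus-union-bound argument is essentially routine.
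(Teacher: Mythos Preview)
Your proposal is correct and follows essentially the same strategy as the paper: show that for fixed $v\in S_q$ each vertex $u$ can satisfy (\ref{eq:L1})--(\ref{eq:L2}) for at most one index $p$, bound $\Pr(u\in L(v)\mid v\in S_q)\le \lceil \sstar/k'\rceil/\dd$, then apply Chernoff and a union bound. The only cosmetic difference is in the ``at most one $p$'' step: the paper first derives $1/6<\deg(v)/\deg(u)<6$ and then shows the admissible interval for $p$ has length $<1$, whereas you recast the constraints as $\deg(u)/\deg(v)\in J_p$ and prove the $J_p$ are pairwise disjoint---a dual formulation of the same fact. Two small remarks: your cross-multiplication yields $c(k'+q)\ge \beta(4k'+2q+2p+1)$ rather than $-1$ (harmless for the estimate), and both your argument and the paper's tacitly invoke stochastic domination by i.i.d.\ Bernoullis when applying Lemma~\ref{bound:chernoff}, since the $Y_u$ are independent but not identically distributed.
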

\begin{proof}
Suppose (\ref{eq:L1}) holds. Note $\ceiling{\ceiling{n/\dd}/(3k')}(k'+q) \leq \ceiling{\ceiling{n/\dd}/(3k')}(k'+k') < (((n/\dd)+1)/(3k')+1)2k' = {   2n/(3\dd) + 2/3 + 2k' < 0.99 n/\dd}$. Therefore the {   left hand side of (\ref{eq:L1}) equals at most $(n/\dd) \deg(v)$}. Analogously, the right hand side of (\ref{eq:L1}) equals at least $\left(n/(3\dd) - n/(6\dd)\right)\deg(u) = \deg(u)n/(6\dd)$.
Therefore (\ref{eq:L1})  implies: 
$(n/\dd) \deg(v) > \deg(u)n/(6\dd)$, i.e.,
$
6\deg(v) > \deg(u).
$
Similarly, (\ref{eq:L2}) implies $
6\deg(u) > \deg(v).
$ Therefore if $u\in L(v)$, then 
\begin{equation}
1/6< \deg(v)/\deg(u)< 6. \label{eq:bounduv}
\end{equation}

Given $v\in S_q$ for some fixed $1\leq q \leq k'$, we compute the probability that $|L(v)|$ is large. 
To this end we first show that 
for a given vertex $u\in V(G)\setminus \{v\}$, if it satisfies both (\ref{eq:L1}) and (\ref{eq:L2}), then there
is only one $S_p$ with $1\leq p \leq k'$ that $u$ can be placed in.
Rearranging inequality (\ref{eq:L1}), we obtain
\begin{align*}
k'+p < \frac{\left(\ceiling{\ceiling{n/\dd}/(3k')}(k'+q)  + \left(\frac{{   13}n}{\dd^{1+\epsilon}} + 4 \right)  
\right)\frac{\deg(v)}{\deg(u)} + \left(\frac{{   13}n}{\dd^{1+\epsilon}} + 4 \right)}{\ceiling{\ceiling{n/\dd}/(3k')}}.
\end{align*}
Similarly by (\ref{eq:L2}), 
\begin{align*}
k'+p > \frac{\left(\ceiling{\ceiling{n/\dd}/(3k')}(k'+q)  - \left(\frac{{   13}n}{\dd^{1+\epsilon}} + 4 \right)  
\right)\frac{\deg(v)}{\deg(u)} - \left(\frac{{   13}n}{\dd^{1+\epsilon}} + 4 \right)}{\ceiling{\ceiling{n/\dd}/(3k')}}.
\end{align*}
These two inequalities mean that if $u \in L(v)$, then by~(\ref{eq:bounduv}),  $k'+{   p}$ must belong to 
an interval in $\mathbb{R}$ of length at most
\begin{align*}
& 2\left(\left(\frac{{   13}n}{\dd^{1+\epsilon}} + 4 \right) \cdot 6 +\left(\frac{{   13}n}{\dd^{1+\epsilon}} + 4 \right) \right)/ \ceiling{\ceiling{n/\dd}/(3k')}  \\
< & 14 \left(\frac{{   13}n}{\dd^{1+\epsilon}} + 4 \right) / (n/(3\dd k')) 
<  {   \frac{{   546} k'}{\dd^{\epsilon}} + \frac{168 k' \dd}{n} < 1}.
\end{align*}
Therefore there indeed may be at most one ${   p}$ so that $u \in S_{{   p}}$ implies $u \in L(v)$. Thus, 
\begin{equation}
\Pr\left(u \in L(v)  {    \given[\big] v \in S_q } \right) \leq {   \ceiling{\frac{\sstar}{k'}}\frac{1}{\dd}}, \nonumber
\label{PruInLvBound}
\end{equation}
and hence 
the expected number of vertices $u$ such that $u \in L(v)$ {   given $v\in S_q$} equals at most 
${   \ceiling{\sstar/k'}  (n-1)/\dd < \ceiling{\sstar/k'}  n/\dd}$.
Since for fixed $v$ and $q$ such that $v\in S_q$, 
the events $u \in L(v)$ are independent for all $u{   \neq v}$, 
by the Chernoff Bound 
we thus obtain that:
\begin{align*}
 {   \Pr\left(|L(v)| - \ceiling{\frac{\sstar}{k'}}\frac{n}{\dd}  > \ceiling{\frac{\sstar}{k'}}\frac{n}{\dd} \given[\Big] v \in S_q \right)} & 
{   \leq 2 \exp\left(-\frac{1}{3} \left(\ceiling{\frac{\sstar}{k'}}\frac{n}{\dd}\right)\right)}
 {   \leq} 2 \exp\left(-\frac{1}{3} \left({\frac{\sstar n}{\dd k'}}\right)\right) .
\end{align*}
Therefore, by the law of total probability, for any given $v\in V(G)$,
${    \Pr\left(|L(v)|  > 2\ceiling{\frac{\sstar}{k'}}\frac{n}{\dd} \given[\Big] v \in S \right)} $ 
${    \leq}$ 
${    2 \exp\left(-\frac{1}{3} \left({\frac{\sstar n}{\dd k'}}\right)\right),}$
 and hence,
\begin{align*}
{   \Pr\left( \left(v \in S\right) \Rightarrow \left(|L(v)|  \leq 2\ceiling{\frac{\sstar}{k'}}\frac{n}{\dd} \right) \right)~ } 
&{   =} 
{   ~1- \Pr\left( \left(v \in S\right) \wedge \left(|L(v)|  > 2\ceiling{\frac{\sstar}{k'}}\frac{n}{\dd} \right) \right)  }\\
& {   \geq} 
{   ~1- \Pr\left(|L(v)|  > 2\ceiling{\frac{\sstar}{k'}}\frac{n}{\dd} \given[\Big] v \in S \right)  
 \geq 
 1- 2 \exp\left(-\frac{1}{3} \left({\frac{\sstar n}{\dd k'}}\right)\right).}
\end{align*}
By  a union bound over all vertices $v$ we thus obtain the thesis.
\end{proof}

\begin{corollary}\label{cor:sizeUbLv}
With positive probability, all the following inequalities hold (for $\dd$ and $n/\dd$ sufficiently large): 
\begin{align*}
& |\Ub| < {   3} {   n}\exp({-\dd^{ {   2}{   \alpha}}}/ {   4}) , \\
&|S| \in  [\sstar n/\dd - n/\delta^{0.5-\epsilon}, \sstar n/\dd + n/\delta^{0.5-\epsilon}], \\
& |L(v)|  \leq 2{   \ceiling{\frac{\sstar}{k'}}\frac{n}{\dd}} \text{ for all } v\in S.\\
&  {    \left|V_{[\bench{i}, \bench{j})}\right|  \leq  \mu_{[\bench{i}, \bench{j})}  +\sqrt{\frac{n \mu_{[\bench{i}, \bench{j})}}{\dd^{1-2{   \alpha}}}}}
\text{ for any two benchmarks } \bench{i}{   <} \bench{j}. 
\end{align*}
In addition, when $\dd > \sqrt{n}$, 
then $\Ub = \emptyset$. 
\end{corollary}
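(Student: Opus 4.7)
The plan is to prove the corollary by a single union bound that aggregates the four high-probability conclusions already established. The first bound on $|\Ub|$ is exactly Claim~\ref{claim:sizeUb}; the second bound on $|S|$ is inequality~(\ref{IneqS}) of Lemma~\ref{lem:ysysn}; the third bound on $|L(v)|$ for all $v\in S$ is Lemma~\ref{lem:closeH}; and the fourth bound on $|V_{[\bench{i},\bench{j})}|$ for all benchmark pairs is inequality~(\ref{eq:sizeVh2}) of Corollary~\ref{cor:yb}. Since only positive probability is required, it suffices to show that the sum of the four failure probabilities is strictly less than $1$ once $\dd$ and $n/\dd$ are large enough.

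I would next collect the failure probabilities explicitly. Claim~\ref{claim:sizeUb} already absorbs the failure events of Corollary~\ref{cor:yb}, Lemma~\ref{lem:ysysn}, and Lemma~\ref{lem:ysq} into one compound bound of order $O(\dd^2 e^{-\dd^{2\alpha}/4}) + O(\dd^2 e^{-(1/3)\sqrt{n/\dd^{1-2\alpha}}}) + O(e^{-n/(4\dd^{0.5})})$, while Lemma~\ref{lem:closeH} contributes the additional term $2n\exp\bigl(-\sstar n/(3\dd k')\bigr)$. Under the parameter choice in~(\ref{EpsilonAlpha}) and the definitions $\sstar=\ceiling{\dd^{1/2+\epsilon+\alpha}}$ and $k'=\min(\ceiling{n/(400\dd)}, \ceiling{\dd^\epsilon/2000})$, each of these quantities tends to $0$ as $\dd$ and $n/\dd$ grow. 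In particular, for either branch of the definition of $k'$ the ratio $\sstar n/(\dd k')$ grows polynomially in $\dd$ (and at least linearly in $n/\dd$), which easily dominates the $\log n$ factor introduced by the prefactor $2n$ in Lemma~\ref{lem:closeH}. A single union bound therefore gives the desired simultaneous validity of all four inequalities with positive probability.

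For the ``In addition'' clause when $\dd>\sqrt{n}$, I would invoke the corresponding ``In addition'' statements already proved: Claim~\ref{claim:sizeUb} gives $\Ub=\emptyset$ with probability at least $1-3/n$, Lemma~\ref{lem:ysysn} gives $\ys=\ysn=\emptyset$ with probability at least $1-1/n$, Lemma~\ref{lem:ysq} gives $\bigcup_q \ysq{q}=\emptyset$ with probability at least $1-1/n$, and Corollary~\ref{cor:yb} gives $\yb{[0,2n)}=\emptyset$ with probability at least $1-1/n$. A further union bound combines these with the $O(n\exp(-\sstar n/(3\dd k')))$-failure event of Lemma~\ref{lem:closeH}, yielding simultaneous validity with positive probability (indeed with probability $1-O(1/n)$).

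There is no conceptual obstacle here: the entire argument is a union bound followed by routine estimation of exponents under~(\ref{EpsilonAlpha}). The only mildly delicate bookkeeping is verifying that the Lemma~\ref{lem:closeH} failure term $2n\exp(-\sstar n/(3\dd k'))$ is subconstant in both branches of $k'$, which reduces to checking $\sstar n/(\dd k') \gg \log n$ in each case; this follows directly from $\sstar n/(\dd k')=\Theta(\dd^{\epsilon+\alpha-1/2}\cdot (n/\dd)^2)$ or $\Theta(\dd^{1/2+\alpha}\cdot n/\dd)$ respectively, both of which dominate $\log n$ for $\dd$ and $n/\dd$ sufficiently large.
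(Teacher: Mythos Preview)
Your proposal is correct and follows essentially the same approach as the paper: cite Corollary~\ref{cor:yb}, Lemma~\ref{lem:ysysn}, Claim~\ref{claim:sizeUb}, and Lemma~\ref{lem:closeH}, then observe via a single union bound that the total failure probability is below~$1$ (and below $1$ together with the ``in addition'' events when $\dd>\sqrt{n}$).

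One small slip worth fixing: in the branch $k'=\lceil n/(400\dd)\rceil$ you actually get $\sstar n/(\dd k')\approx 400\sstar=\Theta(\dd^{1/2+\epsilon+\alpha})$, not $\Theta(\dd^{\epsilon+\alpha-1/2}(n/\dd)^2)$; your stated expression can tend to~$0$ for small $\epsilon,\alpha$ (since $n/\dd\lesssim\dd^{\epsilon}$ in that branch), whereas the correct value $\Theta(\dd^{1/2+\epsilon+\alpha})$ does dominate $\log n$ there because $n\lesssim\dd^{1+\epsilon}$. So the conclusion is right but the displayed asymptotic should be corrected.
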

\begin{proof}
This corollary is an immediate consequence of  Corollary \ref{cor:yb},  Lemma~\ref{lem:ysysn}
Claim \ref{claim:sizeUb} and Lemma \ref{lem:closeH}, as 
$1-{   4}\dd^2 {   e^{-\dd^{2{   \alpha}}/4}} 
- {   8}\dd^2\exp\left(-\frac{1}{{   3}}\sqrt{\frac{n}{\dd^{1-2{   \alpha}}}}\right)
- e^{-\sstar/24}
- 2e^{- n/({   4}\delta^{0.5})}
- {   2}e^{-n/({   4}\dd^{0.5})}
- {   8}\dd^2\exp\left(-\frac{1}{{   3}}\sqrt{\frac{n}{\dd^{1-2{   \alpha}}}}\right)
 - {   5\dd^2} e^{-\dd^{{   2}{   \alpha}}/{   4}}
 - {   2n} \exp\left(- \frac{1}{3}{\frac{\sstar n}{\dd k'}}\right)
 -\frac{{   3}}{{   n}}
 > 0
$
when $\dd$ is sufficiently large.
\end{proof}

\subsection{Step C.}
Throughout Step C, we assume the statements in Corollary \ref{cor:sizeUbLv} hold. 
\subsubsection{Goal.} 
\begin{definition}
Let $G'$ be the graph on the vertex set $\Ub \cup S$ whose edges consist of all the edges in $G[S]$, all the edges between $\Ub$ and $S$ in $G$ and all the edges between $\ys$ and $\ysn \subset B$ in $G$. 
\end{definition}
Note that by the definition of $G'$, $\ys$ and $\ysn$, for every $v\in V(G')$, 
\begin{equation}\label{dG'Bound}
\deg_{G'}(v)\geq \frac{\sstar}{2}.
\end{equation}

In this step we will only change the weights of edges in $G'$.  
Our goal is to obtain pairwise distinct weights in $\zz \setminus \zz'$ (i.e., equal to $0,1, \dots, 5$ modulo $k$) for all vertices in $V(G') = \Ub \cup S$ after Step C.
Within it we will 
not change the weights of edges incident to vertices in $B \setminus \Ub$. Therefore the weights of vertices in $B \setminus \Ub$ will remain distinct and in $\zz'$ by Steps A and B (Corollary~\ref{lem:step2edgew}). 


\subsubsection{Step C-1.}
\paragraph{Initialization.}
We initialize Step C by assigning to all edges in $G'[S]=G[S]$ and $G'[B]$ the new weights: $\ceiling{\ceiling{\frac{n}{\dd}}/2}
$. We do not modify the weights of edges across $B$ and $S$ in $G'$ yet, though. 

Suppose $C_1, \dots, C_T$ are the non-trivial connected components in $G'[S]$ (i.e., of order larger than one), ordered arbitrarily,  and let $W$ be the set of isolated vertices in $G'[S]$. Clearly $W \subset \ys \subset \Ub$ by the definitions of $\ys$ and $\Ub$. 
Set \[U': = V(G') \setminus (S\setminus W).\]

\begin{claim}\label{claim:stepC1}
We may modify every edge weight in $G'$ by at most $2$ so that 
each vertex weight in $U'$ equals $0$ or $1$ modulo $k$, 
each vertex weight in $\Ug \setminus U'$ equals $2$ or $3$ modulo $k$ and 
each vertex weight in $\Ub \setminus U'$ equals $4$ or $5$ modulo $k$.
\end{claim}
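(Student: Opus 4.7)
The plan is to apply a Kalkowski--Karo\'nski--Pfender style residue-fixing algorithm to $G'$, exploiting the fact that every $v \in V(G') = \Ub \cup S$ has $\deg_{G'}(v) \geq \sstar/2$ by~\eqref{dG'Bound}, and $\sstar/2 \gg k$ for $\dd$ and $n/\dd$ large (since $\sstar = \Theta(\dd^{1/2+\epsilon+\alpha})$ while $k = O(\dd^{\epsilon})$). For each vertex $v$ assign a target pair of residues $T_v \subset \zz/k\zz$ (namely $\{0,1\}$ if $v\in U'$, $\{2,3\}$ if $v\in \Ug\setminus U'$, and $\{4,5\}$ if $v\in \Ub\setminus U'$), and seek a collection of shifts $\{\Delta_e\}_{e\in E(G')}$ with $|\Delta_e|\leq 2$ such that after replacing each edge weight by its initial value plus $\Delta_e$, every $v$'s weight modulo $k$ lies in $T_v$.

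First I would construct an acyclic orientation $D$ of $G'$ in which every vertex has out-degree at least $\lceil k/4 \rceil$. Since $\sstar/2 \gg k$, a simple greedy suffices: list the vertices in some linear order and, for each vertex in turn, orient $\lceil k/4 \rceil$ of its as-yet-unoriented incident edges outwards. One may first secure a spanning forest of $G'$ as a back-stop, then pad out-degrees using the plentiful surplus edges. Then process the vertices in the reverse topological order of $D$ (sinks first). When it is $v$'s turn, the out-edges $F_v := \{e \in E(D) : e \text{ leaves } v\}$ have not yet had their $\Delta_e$'s set. We then freely choose $\Delta_e \in \{-2,\dots,2\}$ for $e\in F_v$ so that the induced shift $\sum_{e\in F_v}\Delta_e$ moves $v$'s current weight into $T_v \pmod k$; this is always possible because $\sum_{e\in F_v}\Delta_e$ can be any integer in $[-2|F_v|, 2|F_v|] \supseteq [-k/2, k/2]$, which covers every residue modulo $k$. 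The other endpoints of the edges in $F_v$ are unprocessed and simply absorb these shifts; they will realign themselves when it is their turn.

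The main obstacle I anticipate is the handling of sinks of $D$, which have no out-edges and therefore cannot adjust their own residue when processed. The cure is to ensure $D$ has no sinks --- possible because every vertex has degree at least $\sstar/2$ and we require only $\lceil k/4 \rceil$ out-edges per vertex --- or, alternatively, to pre-reserve for each sink candidate one dedicated residue-fixing edge that is excluded from the main procedure and set first so that the sink's initial residue is already in $T_v$. In either version, the generous degree surplus $\sstar/2 \gg k$ leaves ample room for the bookkeeping. At the end every edge weight has changed by at most $2$ from its initialization as required, and each $v \in V(G')$ has weight in the prescribed residue class modulo $k$.
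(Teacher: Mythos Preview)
Your approach has a genuine gap at the sinks. The core of your plan is to fix each vertex's residue using only its out-edges in an acyclic orientation $D$; but every finite acyclic digraph has a sink, so ``ensure $D$ has no sinks'' is impossible. Your fallback --- reserving one residue-fixing edge per sink --- also fails: a single shift in $\{-2,\ldots,2\}$ offers only five residues, not enough to hit a prescribed pair modulo $k>50$. (There is also an internal inconsistency: with sinks processed first, the heads of the edges in $F_v$ have already been processed when you reach $v$, so setting $\Delta_e$ for $e\in F_v$ would disturb their residues rather than be ``absorbed'' by unprocessed vertices. Forward topological order repairs this, but the sink problem remains.)

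The idea you are missing is precisely why the target sets are pairs of \emph{consecutive} residues. The paper orders $V(G')$ arbitrarily and, when handling $v_i$, modifies each forward edge by $0$ or $+1$ and each backward edge $\{v_i,u\}$ by $0$ or $+1$ if $u$ currently sits at the smaller element of its target pair, and by $0$ or $-1$ otherwise. Such a backward move cannot push $u$ out of its pair, yet together with the forward moves it gives $v_i$ at least $\deg_{G'}(v_i)+1\geq \sstar/2+1>k$ consecutive attainable weights, enough to land in the target. Each edge is touched once as forward (by at most $1$) and once as backward (by at most $1$), hence changed by at most $2$ overall. This two-sided flexibility is exactly what lets the \emph{last} vertex in the order --- which has no forward edges at all --- still be fixed, dissolving the sink obstruction your one-sided scheme cannot escape.
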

\begin{proof}
We will apply an algorithm analogous to the ones used in~\cite{KKP,Pasy}, whose origins date back to~\cite{Kthesis}.

Given an arbitrary ordering $v_1, v_2,  \dots, v_{|S \cup \Ub|}$ of vertices in $G'$, each edge $\{v_i, v_j\}$ with $i<j$ is called a {\it forward edge} of $v_i$ and a  {\it backward edge} of $v_j$. 
For each $v \in U'$, define a set {   $\ssett_{v} = \{0,1\}$},  for each $v \in \Ug\setminus U'$, define  {   $\ssett_{v} =\{2,3\}$}, and for each $v \in \Ub\setminus U'$, define  {   $\ssett_{v} =\{4,5\}$}. 
 For $i = 1, 2, \dots$, we consider each consecutive $v_i$ after another and modify 
weights of edges  incident with $v_i$ in $G'$ so that the weight of $v_i$ lands in {   $\ssett_{v}$} modulo $k$.
We also guarantee that this weight does not leave  {   $\ssett_{v}$} (modulo $k$) throughout the further part of the algorithm.

By~(\ref{dG'Bound}), the first vertex $v_1$ has at least $\sstar/2$ neighbors, 
i.e., at least $\sstar/2$ forward edges in $G'$.
We modify each forward edge of $v_1$ by adding $0$ or $1$ to its weight. 
Thereby, we may obtain at least ${   \sstar/2}{   +1} {   > k + 6}$ distinct weights which are consecutive integers for $v_1$.
Thus there is a way of choosing these modifications so that the weight of $v_1$ belongs to  {   $\ssett_{v}$} modulo $k$. 

We then proceed consecutively with $v_i$, $i=2,3,\ldots$. 
For a given $i$, we again admit adding $0$ or $1$ to the weights of the forward edges of $v_i$. 
There are two admissible modifications for the weights of backward edges of $v_i$ as well. These belong to the set $\{-1,0,1\}$.
Specifically, say $\{v_i,u\}$ is a backward edge of $v_i$. If the vertex weight of $u$  modulo $k$ is currently the smaller value in {   $\ssett_{u}$}, then we admit modifying the weight of $\{v_i,u\}$ by adding $0$ or $1$. Note that this in particular guarantees that the updated vertex weight of $u$ will remain in {   $\ssett_{u}$} modulo $k$. By the same reason, we admit adding $0$ or $-1$ to the weight of $\{v_i,u\}$ if the vertex weight of $u$  modulo $k$ is currently the larger value in {   $\ssett_{u}$}.

Consequently, analogously as for $v_1$, we may thereby obtain at least  $\deg_{G'}(v) {   +1} \geq \sstar/2 {   +1} > k+6$ weights which are consecutive integers for $v_i$.
Thus there is a way of choosing these modifications so that the weight of $v_i$ belongs to  {   $\ssett_{v}$} modulo $k$, which is our goal.


Since each edge in $G'$ can be modified at most twice: once as a forward edge and once as a backward edge, each edge in $G'$ changes its weight by at most $2$ in Step C-1. 
\end{proof}

\subsubsection{Step C-2.}

Step C-2 is more technical. We in particular handle all bad vertices within it. 
Given an ordering $v_1, \dots, v_{|S\cup \Ub|}$ of vertices in $G'$ (specified later), again each edge $\{v_i, v_j\}$ with $i<j$ is called a {\it forward edge} of $v_i$ and a  {\it backward edge} of $v_j$. 
We will again use an algorithm inspired by~\cite{Kthesis,KKP,Pasy}.

If $v_i \in \Ub$, we say all its forward and backward edges are {\it active}. If $v_i\in \Ug$, then only its forward and backward edges in $E(S)$ are called {\it active}. (Note that a good vertex $v$ in $S$ could  be adjacent to some vertex $u$ in $\Ub\setminus S$. Such an edge would still be active for $u\in \Ub$, but would not be active for $v\in \Ug$.) 
We call a vertex {\it terminal} if it has no active forward edge in the ordering.

Recall that by the definitions in Step C-1, the non-trivial components in $G'[S]$: 
 $C_1,\dots, C_T$ together with the set $W$ of isolated vertices in $S$ partition 
 $S$.
 Furthermore, as
$W \subset \ys \subset \Ub$, by Corollary \ref{cor:sizeUbLv}, $|W| \leq |\Ub| < |S|$, and thus there is at least one nontrivial connected component in $G'[S]$. 
Recall \[V(G') = S\cup \Ub, \ \ \ U' = V(G') \setminus (C_1 \cup \dots \cup C_T).\] 
The following properties of $G'$ are immediate from the definitions of different types 
of bad vertices and $G'$. 
\begin{claim}\label{claim:G'}
Each vertex in $G'$ has at least $\sstar/2$ active edges in $G'$, 
\begin{equation}
U' \subset \Ub, \ \  \Ug \subset S \setminus U',  \ \  V(G') = U'\cup S. \label{eq:U'}
\end{equation}
Thus, in particular, 
if $v \in V(G')$ is a good vertex, then its neighbors {in $G'$} which are not in $S$ are bad vertices. 
\end{claim}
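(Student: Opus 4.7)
The claim is essentially a bookkeeping lemma that unpacks the definitions introduced in Steps B and C, so I expect no substantial obstacle; the plan is to handle each of the three bulleted statements in sequence by reducing them to the definitions.

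For the active-edge count, I would split into two cases depending on whether $v$ is bad or good. If $v\in\Ub$ then by the definition every edge of $G'$ incident to $v$ is active, so the bound $\deg_{G'}(v)\ge\sstar/2$ from inequality~(\ref{dG'Bound}) applies directly. If $v\in\Ug$, then $v\in S\setminus\Ub$; in particular $v\notin\ys$, which by the definition of $\ys$ means that $v$ has at least $\sstar\deg(v)/(2\dd)\ge\sstar/2$ neighbors in $S$. These neighbors are joined to $v$ by edges of $G[S]=G'[S]$, which are precisely the active edges of $v$ when $v\in\Ug$, and so the bound holds.

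For the set-theoretic identities in~(\ref{eq:U'}), I would first rewrite $U'$ explicitly. Since $C_1,\dots,C_T$ are the non-trivial components of $G'[S]$ and $W$ is the set of isolated vertices of $G'[S]$, one has $C_1\cup\cdots\cup C_T=S\setminus W$. Consequently,
\[
U'=V(G')\setminus(S\setminus W)=(S\cup\Ub)\setminus(S\setminus W)=(\Ub\setminus S)\cup W.
\]
Now $W\subset\ys\subset\Ub$ by definition, so both pieces lie in $\Ub$, giving $U'\subset\Ub$. For $\Ug\subset S\setminus U'$: if $v\in\Ug=S\setminus\Ub$, then $v\notin\Ub$ and in particular $v\notin W$, so $v\in S\setminus W\subset V(G')\setminus U'$, hence $v\in S\setminus U'$. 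For $V(G')=U'\cup S$: the explicit form $U'=(\Ub\setminus S)\cup W$ yields $U'\cup S=(\Ub\setminus S)\cup W\cup S=\Ub\cup S=V(G')$.

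Finally, the concluding implication follows in one line from $V(G')=S\cup\Ub$: if $v\in\Ug$ and $u$ is a neighbor of $v$ in $G'$ with $u\notin S$, then $u\in V(G')\setminus S=\Ub\setminus S\subset\Ub$, so $u$ is bad. Since the entire argument is a direct unfolding of the prior definitions (particularly of $\Ub$, $\ys$, $W$, and $U'$), I do not anticipate any genuine difficulty; the only care needed is to keep track of which edges count as active when $v$ is good versus bad.
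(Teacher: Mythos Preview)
Your proof is correct and follows exactly the approach the paper intends: the paper states that the claim is ``immediate from the definitions of different types of bad vertices and $G'$'' without spelling out the details, and your argument simply unfolds those definitions carefully in each of the three parts. There is nothing to add.
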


\paragraph{Ordering of vertices.} 
We now specify the ordering of the vertices in $G'$. 
Let $t_i, i=1,2,\dots$ denote the terminal vertices in the ordering. For each $i$, let $r_i$ be the vertex immediately preceding    
$t_i$ in the ordering.
We show there is an ordering satisfying the following claim. 
\begin{claim}\label{claim:ordering}
There is an ordering  of the vertices in $G'$ such that all vertices in $U'$ come before vertices not in $U'$. 
Furthermore, 
 the ordering satisfies the following conditions. 
\begin{enumerate}
\item For each $i$, $\{r_i, t_i\}$ is always an edge in $G'$ and moreover, 
this edge is an active forward edge for $r_i$.
\item  The vertex sets $\{r_i, t_i\}_i$ are pairwise disjoint.
\item For each $i$,  the pair $\{r_i, t_i\}$ satisfies one of the following: 
either both $t_i, r_i$ are in $U' \subset \Ub$, or both $t_i, r_i$ are in $S \setminus U'$. 
\end{enumerate}
\end{claim}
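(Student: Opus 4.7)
The plan is to place all vertices of $U'$ at the beginning of the ordering and all vertices of $S \setminus U' = C_1 \cup \cdots \cup C_T$ at the end, and to arrange each of the two blocks component by component. Within each connected component of $G'[U']$ and within each $C_j$ I would use the reverse of a connected (BFS/DFS-style) ordering. The key standard fact is: every connected graph $H$ with $|V(H)|\geq 2$ admits an ordering $w_1,\ldots,w_m$ in which $w_{j+1}$ has a neighbor among $w_1,\ldots,w_j$ for every $j\geq 1$, and moreover $w_1$ may be chosen arbitrarily and $w_2$ may be taken to be any neighbor of $w_1$. Reversing such an ordering yields $v_1,\ldots,v_m$ in which every $v_i$ with $i<m$ has a neighbor in $\{v_{i+1},\ldots,v_m\}$, while $v_{m-1}$ is adjacent to $v_m$.

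With this tool, the construction is as follows. First list the components of $G'[U']$ contiguously in any order, placing each singleton anywhere in the $U'$ block and applying the reverse connected ordering inside every non-trivial component. Then list $C_1,\ldots,C_T$ contiguously after $U'$, again applying the reverse connected ordering inside each $C_j$ (each of which has at least two vertices by definition).

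To verify the three conditions I would analyze which vertices become terminal. Since $U'\subset \Ub$, every edge of $G'$ incident to a vertex of $U'$ is active, so any $v\in U'$ that has a neighbor in $S\setminus U'$ has a forward active edge and is not terminal; in particular an isolated vertex of $G'[U']$, which by $\deg_{G'}(v)\geq \sstar/2$ must have all its neighbors in $S\setminus U'$, is automatically non-terminal. Inside a non-trivial component $\mathcal{H}$ of $G'[U']$ the reverse connected ordering makes every vertex except the last have a forward neighbor in $\mathcal{H}$; the last vertex of $\mathcal{H}$ may be terminal, but its immediate predecessor lies in $\mathcal{H}\subset U'$ and is adjacent to it by construction. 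An analogous argument works inside each $C_j$: since edges inside $C_j\subset S$ are active both for good vertices (where active edges are the $E(S)$-edges) and for bad vertices (where all edges are active), only the last vertex of $C_j$ can be terminal, and its predecessor lies in $C_j\subset S\setminus U'$ and is adjacent. This simultaneously delivers condition 1 (each pair is an edge, active at $r_i$, and forward for $r_i$), condition 2 (distinct pairs come from distinct components and are therefore disjoint), and condition 3 (both members of every pair lie in the same component, hence on the same side of the $U'/(S\setminus U')$ split).

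The only genuinely delicate point---the main obstacle---is to avoid terminal vertices whose immediate predecessor crosses a component boundary or the $U'/(S\setminus U')$ boundary. Two observations keep this under control: $G'[U']$ has no edges between its own distinct components, so the pairing automatically stays inside a component; and any vertex of $U'$ possessing an external neighbor is automatically non-terminal thanks to the $U'$-before-$(S\setminus U')$ layout, which rules out the problematic case of a singleton component of $G'[U']$ being terminal and requiring a pair. Combining these observations with the reverse connected ordering inside each component yields the claim.
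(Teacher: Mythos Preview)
Your proposal is correct and follows essentially the same approach as the paper: partition $V(G')$ into the $U'$ block followed by the $S\setminus U'$ block, list the connected components of $G'[U']$ and then $C_1,\ldots,C_T$ contiguously, and inside each non-trivial component use a reversed connected (BFS-type) ordering so that only the last vertex can be terminal and its predecessor is an adjacent vertex in the same component. The paper's only cosmetic difference is that it places the isolated vertices $W'$ of $G'[U']$ at the very front rather than ``anywhere in the $U'$ block,'' but your more flexible placement works for the same reason you give---each such vertex has all its $G'$-neighbors in $S\setminus U'$ and hence an active forward edge.
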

\begin{proof}
Let $C_1', \dots, C_{T'}'$ be the non-trivial connected components in $G'[U']$. Suppose $W'$ is the set of isolated vertices in $G'[U']$. 

We order the vertices in $G'$ as follows: we start from 
the vertices in $W'$, ordered arbitrarily. 
Coming next will be the vertices in $C_1', \dots, C_{T'}'$ sequentially. Last in the ordering will in turn be vertices in $C_1, \dots, C_T$ sequentially. Note that we have not specified the ordering within each $C_i$ or $C_i'$ yet. 
Nevertheless, it is already clear that
vertices in $U'$ will all come before vertices not in $U'$ in the ordering. (See Figure \ref{fig:1}.)

\begin{center}
\begin{figure}[h]
\includegraphics[scale=0.6]{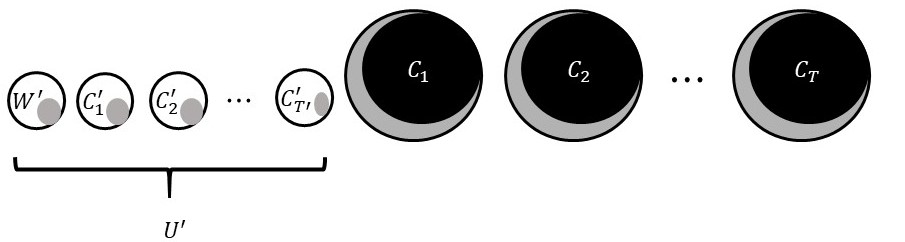}
\caption{Illustration of ordering of vertices in $V(G')$. \small{Black vertices are good vertices in $S$; gray vertices are bad vertices in $S$, and white vertices are bad vertices in $U'\setminus S$.} }
\label{fig:1}
\end{figure}
\end{center}

We finally specify the 
ordering within
non-trivial connected components $C_i$,  $C_i'$. To this end we simply use reversed BFS to order the vertices in each of these, one after another.
Consequently, if $r, t$ are the last two vertices in the ordering in a given such component, then $t$ is the root of the corresponding BFS tree, and hence $\{r, {   t}\}\in E(G')$. 

Consider a given $C_i'$, $1\leq i \leq T'$.
Since all the vertices in $C_i'$ are bad, 
in particular the last two vertices $r, t$  in $C_i'$ are bad vertices in $U'$, 
hence, 
$\{r,t\}$ is an active forward edge for $r$. 
Analogously, for a given $C_i$, $1\leq i\leq T$, all vertices in
$C_i$ are in $S\setminus U'$ and in particular so are the last two vertices $r, t$  in $C_i$. 
Moreover, since
all edges in $C_i \subset S$ are active, 
the edge $\{r, {   t}\}$ is an active forward edge for $r$. 
Additionally, by the definition of BFS, only the last vertex (i.e., the root $t$) can be a terminal vertex in any given $C_i'$ or $C_i$.

We are left to show that there is no terminal vertex in $W'$.  Since the minimum degree of $G'$ is at least $\sstar/2$ and $W'$ itself is an independent set {in $G'$}, each vertex in $W'$ has at least $\sstar/2$ incident edges joining it with 
vertices in 
$V(G')\setminus W'$, which come after $W'$ in the ordering. Since $W' \subset \Ub$, these forward edges are active. 
 Therefore vertices in {$W'$} cannot be terminal. 
 %
%
%
\end{proof}

\paragraph{Anchor sets. }
Let ${   \tb = \ceiling{{   48}ne^{-\dd^{2{   \alpha}}/{   4}}/\sstar}}, 
{   \tg = {   2}\ceiling{\frac{{   16}n}{\dd k'}\frac{1}{\tb}}\tb}$. 
{   In particular, $({2}\tb) \mid \tg$}.

Each vertex in $G'$ is either in $U'$ and thus is a bad vertex, or is in $S\setminus U'$ and thus could either be good or bad. 

Each 
bad vertex $v\in U'$ 
will be assigned 
in step C-2 
an anchor set $\ssetb_v$ of two elements 
of the following form $\{l+  (2\lambda) \tb k+ak, l+ (2\lambda+1) \tb k+ak\}$ where $\lambda \geq 0$ and $a\in [0,\tb-1]$ are integers to be determined 
 in Step C-2, and $l = 0$ or $1$ is 
the weight of $v$ modulo $k$ at the end of Step C-1 and thus is pre-determined. 
All the elements in $\ssetb_v$ are regarded {\textbf{modulo}} $\tg k$. 
For a fixed $l = 0$ or $1$, by varying $\lambda$ and $a$, these sets partition the set of integers $\{l+k\cdot \zz \}$ modulo $\tg k$. Different values of $a$ correspond to  different congruence classes modulo $\tb k$, denoted by $\overline{\mathcal{C}}_a(l) = \{l+ ak+ \tb k \cdot \zz \mod \tg k\}$. Note these are well defined as $\tg$ is divisible by $\tb$. 

Similarly,  {each 
vertex} $v\in V(G')\setminus U'$ will be assigned in step C-2 a set $\sset_v$ 
of the form 
$\{l+  (2\lambda) \tg k+ak, l+ (2\lambda+1) \tg k+ak\} \subset \zz_{\geq 0}$ where $\lambda, a$ are integers with $\lambda \geq 0$, $a\in [0,{   \tg}-1]$ to be determined in Step C-2, and $l = 2,3,4$, or $5$ is the vertex weight of $v$ modulo $k$ at the end of Step C-1. 
For a fixed $l$, these sets partition the set of non-negative integers $\{l+k\cdot \zz_{\geq 0} \}$. 
Different values of $a$ correspond to  different congruence classes, denoted by ${\mathcal{C}}_a(l) = \{l+ak+ \tg k \cdot \zz\}$.

\paragraph{Goal. }
For $i = 1, 2, \dots$, the algorithm sequentially analyzes each $v_i$, greedily modifying weights of active edges  incident to $v_i$ in $G'$  to prescribe the vertex weight of $v_i$ to an appropriately chosen 
set ${\ssetb_{v_i}}$ (if $v_i \in U'$) or 
${\sset_{v_i}}$ (if $v_i \notin U'$)  described above. Furthermore:
\begin{enumerate}[(i)] 
\item \label{item:goal1} In the process of analyzing a given 
$v_j$, for any $i<j$, the set $\sset_{v_i}$ or $\ssetb_{v_i}$ remains unchanged. 
That is,
the vertex weight of $v_i$  stays 
in $\sset_{v_i}$ if $v_i\notin U'$, and stays 
in $\ssetb_{v_i}$ modulo $\tg k$ if $v_i\in U'$ throughout this and all later stages of the algorithm. 
\item \label{item:goal2} The bad vertices in $U' = \Ub \cap U'$  have distinct $\ssetb_{v}$'s (modulo $\tg k$) assigned, with vertex weights being 0 or 1 modulo $k$.
\item \label{item:goal3}The bad vertices in $\Ub \setminus U' $  have distinct $\sset_{v}$'s assigned, with vertex weights being 4 or 5 modulo $k$.
\item \label{item:goal4}Each good vertex $v\in \Ug \setminus U' = \Ug$ has assigned a set $\sset_{v}$ different from the ones of vertices in $L(v)$, with vertex weights being 2 or 3 modulo $k$.
\end{enumerate}

\paragraph{Rules. }
Rules of modifying edge weights are as follows. Suppose we are analyzing $v_i$ and $\{u, v_i\}$ is an edge incident to $v_i$ in $G'$. 
\begin{enumerate}
\item For $v_1$, let $\sset_{v_1}$ (if $v_1 \notin U'$)  or $\ssetb_{v_1}$ (if $v_1 \in U'$) be the set of the desired form that contains the current vertex weight of $v_1$ (modulo $\tg k$, if $v_1\in U'$). Note the current weight of $v_1$ uniquely determines the set $\sset_{v_1}$ or $\ssetb_{v_1}$, respectively. We next move to $v_2$.
\item\label{item:R2} Only the active backward and forward edges of $v_i$ can get weights changed. 
\item\label{item:R3}  To modify an active forward edge $\{v_i, u\}$ of $v_i$, we admit adding to its weight 
an integer in $\{0,k, 2k,\dots \tb k\}$ if $v_i \in \Ub \cap U' = U'$ and respectively,  an integer in $\{0,k, 2k,\dots \tg k\}$ if $v_i \notin U'$. The forward edges of $v_i$ will together account for 
the congruence class $\overline{\mathcal{C}}_a(l)$, if $v_i \in U'$, or ${\mathcal{C}}_a(l)$, if $v_i \notin U'$, 
the weight of $v_i$ will ultimately belong to.

\item \label{item:R4} There are two 
options to modify each active backward edge $\{v_i, u\}$ of $v_i$ if both $v_i, u \in S\setminus U'$. We admit to modify the weight of $\{v_i,u\}$ by adding $0$ or $\tg k$ if the current weight of $u$ is the smaller value in $\sset_u$, and adding $0$ or $-\tg k$ if the current weight of $u$ is the larger value in $\sset_u$. 

\item\label{item:R5} There are two options to modify each active backward edge $\{v_i, u\}$ of $v_i$ if both $v_i, u \in U'$. We admit to modify the weight of $\{v_i,u\}$ by adding $0$ or $\tb k$ if the current weight of $u$ is congruent to 
the element $l + (2\lambda)\tb k + ak$ in $\ssetb_u$, and adding $0$ or $-\tb k$ if the current weight of $u$ is congruent to 
the element $l + (2\lambda+1)\tb k + ak$ in $\ssetb_u$  modulo $\tg k$.

\item\label{item:R6} If $v_i\in U'$ but $u \notin U'$, since all vertices in $U'$ come before vertices not in $U'$ (by Claim \ref{claim:ordering}), $\{v_i, u\}$ cannot be a backward edge of $v_i$. Follow Rule \ref{item:R3} to modify this edge as an active forward edge of $v_i$. 

\item \label{item:R7}If $v_i\notin U'$ but $u \in U'$,  there are two options to modify each active backward edge $\{v_i, u\}$. We admit to modify the weight of $\{v_i,u\}$ by adding $0$ or $\tg k$. 
Note however that if $v_i \in \Ug$ and $u \in U' \setminus S$, then  $\{v_i, u\}$ is not an active backward edge of $v_i$ by the definition of active edges for good vertices. 

\end{enumerate}
\begin{claim}\label{claim:goal1}
The Rules above guarentee Goal (\ref{item:goal1}) 
holds
throughout the algorithm.
\end{claim}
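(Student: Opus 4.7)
The plan is to prove Claim~\ref{claim:goal1} by direct case analysis on the rules for editing active backward edges, together with the observation that forward edges of the current vertex $v_j$ cannot touch any previously finalized $v_i$.

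First, suppose we are processing $v_j$ and consider any $v_i$ with $i<j$. By Rule \ref{item:R2}, only the active forward and backward edges of $v_j$ can change weight at this stage. The active forward edges of $v_j$ join $v_j$ to vertices with index strictly greater than $j$, so none of them are incident to $v_i$. Hence the weight of $v_i$ can only be affected by an active backward edge $\{v_i,v_j\}$ of $v_j$, and the claim reduces to showing that Rules \ref{item:R4}, \ref{item:R5} and \ref{item:R7} each preserve the current set assigned to $v_i$. By Claim~\ref{claim:ordering}, vertices in $U'$ precede vertices outside $U'$, so the only possibilities for $(v_i,v_j)$ are: both in $S\setminus U'$, both in $U'$, or $v_i\in U'$ and $v_j\notin U'$.

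Case (A): both $v_i,v_j\in S\setminus U'$. Here Rule \ref{item:R4} applies. Writing $\sset_{v_i}=\{w,w+\tg k\}$, by construction the admissible modifications are exactly $\{0,\tg k\}$ if $v_i$'s current weight equals $w$, and $\{0,-\tg k\}$ if it equals $w+\tg k$. In either case the new weight is again an element of $\sset_{v_i}$.

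Case (B): both $v_i,v_j\in U'$. Here Rule \ref{item:R5} applies, and $\ssetb_{v_i}=\{l+2\lambda\tb k+ak,\ l+(2\lambda+1)\tb k+ak\}\pmod{\tg k}$. The two elements differ by $\tb k$ modulo $\tg k$, so adding $0$ or $\pm\tb k$ (as prescribed by whether $v_i$'s current weight is the first or second element) either keeps $v_i$'s weight in the same congruence class modulo $\tg k$ or swaps it with the other element of $\ssetb_{v_i}$. In particular, the weight of $v_i$ remains in $\ssetb_{v_i}$ modulo $\tg k$.

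Case (C): $v_i\in U'$ and $v_j\notin U'$. Here Rule \ref{item:R7} applies, allowing modification by $0$ or $\tg k$. Since $\tg k\equiv 0\pmod{\tg k}$, the residue of $v_i$'s weight modulo $\tg k$ is unchanged, so it still lies in $\ssetb_{v_i}$. Note that if moreover $v_i\in U'\setminus S$ and $v_j\in \Ug$, then by the definition of active edges for good vertices the edge $\{v_i,v_j\}$ is simply not active for $v_j$, and no modification occurs at all.

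The main point requiring care is Case (C), where one must remember that $\ssetb_v$ is defined modulo $\tg k$ (not modulo $\tb k$), so that the coarser ``hop'' of $\tg k$ used by good vertices is invisible to the set-invariant we maintain for bad vertices in $U'$; the divisibility $(2\tb)\mid\tg$ specified earlier in the setup is what makes this compatibility automatic and will also be needed in the subsequent Goals \ref{item:goal2}--\ref{item:goal4}, but for Goal \ref{item:goal1} alone the routine verification above suffices.
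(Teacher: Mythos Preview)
Your proof is correct and follows the same approach as the paper's own (very brief) argument: verify that each rule governing active backward edges keeps the previously-processed endpoint in its assigned set, and that forward edges cannot disturb earlier vertices. You have simply unpacked the paper's two-sentence proof into an explicit case analysis, correctly using the ordering from Claim~\ref{claim:ordering} to rule out the configuration $v_i\notin U'$, $v_j\in U'$.
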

\begin{proof}
It is easy to see from Rules \ref{item:R4}-\ref{item:R6} that the updated weight of $u$ remains in $\sset_u$ or $\ssetb_u$, respectively, even after changing the weight of an active backward edge $\{v_i, u\}$ of $v_i$. Since 
$\ssetb_u$'s are regarded modulo $\tg k$,
Rule \ref{item:R7} also guarantees that the updated weight of $u$ stays 
in $\ssetb_u$.
\end{proof}

\begin{claim}\label{claim:StepCweightchange}
Assume (\ref{eq:sizeVh2}) holds for all  benchmarks $\bench{i} < \bench{j}$. 

After Step C-2,  each edge of $G[S]$ has weight in $[\ceiling{\ceiling{n/\dd}/2}-2\tg k-2, \ceiling{\ceiling{n/\dd}/2}+2\tg k+2]$ and each edge of $G'[B]$ has weight in $[\ceiling{\ceiling{n/\dd}/2}-2\tb k-2, \ceiling{\ceiling{n/\dd}/2}+2\tb k+2]$. 
During Step C, each edge across $U'\setminus S$ and $\Ug$ changes its weight by at most $\tb k + 2$ and   
each edge across $U' \setminus S$ and $S
\setminus \Ug$ changes its weight by at most $
\tg k + \tb k + 2$. Edges not in $G'$ do not get weights changed.

However, if $\Ub = \emptyset$, then $G' = G[S]$ and $U' =\emptyset$. After Step C-2,
each edge of $G[B]$ has then weight {   $1$} or $\ceiling{n/\dd}+a'$ and each edge in $G[S]$ has weight in $[\ceiling{\ceiling{n/\dd}/2}-2\tg k-2, \ceiling{\ceiling{n/\dd}/2}+2\tg k+2]$. Moreover, during Step C, no weights are changed for edges across $B$ and $S$. 
\end{claim}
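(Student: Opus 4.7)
The plan is to track every edge's weight through Step C-1 and Step C-2 by casework on endpoint types. Edges outside $G'$ are never touched in Step C (Rule \ref{item:R2}), so they preserve their Step B values given by Corollary \ref{lem:step2edgew}. In Step C-1, only edges of $G'[S] = G[S]$ and $G'[B]$ are initialized to $\ceiling{\ceiling{n/\dd}/2}$, and Claim \ref{claim:stepC1} then shifts every edge of $G'$ by at most $2$, so after Step C-1 each edge of $G'[S]$ or $G'[B]$ lies within $2$ of $\ceiling{\ceiling{n/\dd}/2}$ while each edge of $G'$ across $B$ and $S$ lies within $2$ of its Step B value.

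For Step C-2 I would split edges of $G'$ using the partition $V(G') = U' \sqcup (S \setminus U')$ together with the identity $S \cap U' = W$. Edges of $G'[B]$ go only between $\ys \cap B$ and $\ysn$, both contained in $\Ub \setminus S \subset U'$, so both endpoints lie in $U'$; Rules \ref{item:R3} and \ref{item:R5} then contribute at most $\tb k$ each, for a total shift of at most $2\tb k$. Edges of $G[S]$ must have both endpoints in $S \setminus W = S \setminus U'$ since $W$ is the set of vertices isolated in $G'[S]$; Rules \ref{item:R3} and \ref{item:R4} then contribute at most $\tg k$ each. For an edge across $U' \setminus S$ and $\Ug$, Claim \ref{claim:ordering} places the $U'$-endpoint first, and Rule \ref{item:R3} (or \ref{item:R6}) shifts the edge by at most $\tb k$; at the good $S$-endpoint this edge is not active, because active edges of good vertices lie in $E(S)$ and this one does not, so no further shift occurs. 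For an edge across $U' \setminus S$ and $\Ub \cap S = S \setminus \Ug$, the $U'$-endpoint again contributes at most $\tb k$ and Rule \ref{item:R7} at the bad $S$-endpoint contributes at most $\tg k$.

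Combining the Step C-1 shift of $\pm 2$ with these Step C-2 bounds yields the claimed intervals for $G[S]$ and $G'[B]$ edges (from $2\tg k + 2$ and $2\tb k + 2$) and the stated shifts $\tb k + 2$ and $\tb k + \tg k + 2$ for the two remaining edge types; edges with both endpoints in $U'$ (shift at most $2\tb k + 2$) fit into the last bound thanks to $\tg \geq 2\tb$, which is immediate from the definitions. In the $\Ub = \emptyset$ special case, $U' \subset \Ub$ and $\ys, \ysn \subset \Ub$ are all empty, forcing $G' = G[S]$; edges of $G[B]$ and of $E(B,S)$ lie outside $G'$ and so keep their Step B values as described by Corollary \ref{lem:step2edgew}, while the $G[S]$ bound is as above. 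The main obstacle is pure bookkeeping, with the most delicate point being the verification that backward edges of good vertices leaving $S$ are genuinely inactive and hence escape modification in Step C-2 --- this is what produces the sharper bound $\tb k + 2$ for edges from $U' \setminus S$ to $\Ug$.
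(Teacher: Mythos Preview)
Your proposal is correct and follows essentially the same approach as the paper's own proof: casework on edge types via the partition $V(G')=U'\sqcup(S\setminus U')$, tracking the $\pm 2$ from Step~C-1 and the forward/backward contributions in Step~C-2 according to Rules~\ref{item:R3}--\ref{item:R7}, with the key observation that a backward edge from a good vertex in $S$ to $U'\setminus S$ is inactive. Your handling of the sub-case where the $S$-endpoint lies in $W\subset U'$ (both ends in $U'$, shift $\le 2\tb k$, absorbed via $\tg\ge 2\tb$) matches the paper's split ``$v\in U'$ or not'' for edges between $S\setminus \Ug$ and $U'\setminus S$.
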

\begin{proof}
Each edge can be modified at most twice in Step C-2: once as an active forward edge and once as an active backward edge. By the Rules above, 
each time its weight is modified, it can be changed by at most $\tg k$, thus each edge weight can be changed in total by at most $2\tg k$ in Step C-2. Step C-1 changes in turn an edge 
weight by at most 2 by Claim \ref{claim:stepC1}. Since at the beginning of Step C-1, each edge weight in $E(S)$   
 was initialized as $\ceiling{\ceiling{n/\dd}/2}$,  the result for edges in $E(S)$ follows.

For each edge in $E(G') \cap E(B)$, both its ends 
are bad vertices, 
and thus Step C-1 changes its weight by at most 2 and Step C-2 changes its weight by at most $2\tb k$ by Rules \ref{item:R3} and \ref{item:R5}. The result follows, since the edge weight  
was initialized as $\ceiling{\ceiling{n/\dd}/2}$ at the beginning of Step C-1. 
 
 For each edge between $v\in \Ug$ and $u\in {  U'}\setminus S$, 
 again Step C-1 changes its weight by at most 2. Vertex $u$ must be a bad vertex and it comes before $v$ in the ordering. In Step C-2, while analyzing 
 $u$,  the weight of $\{v, u\}$ is changed by at most $\tb k$ by {  Rule \ref{item:R3}}. 
In the process of analyzing 
$v$ in turn, since $v \in \Ug$ but $u \notin S$, the edge $\{v, u\}$ is not active for $v$ by the definition of active edges for good vertices, and thus could not be changed. 
 The result follows. 
 
 For each edge between $v\in S\setminus \Ug$ and $u\in {  U'}\setminus S$, 
 $v$ is either in $U'$ or not.
If $v \in U'$, then since $u\in U'$, Step C-2 changes the weight of {  $\{u, v\}$} by at most $2\tb k$ by Rules \ref{item:R3} and \ref{item:R5}.   If $v \notin U'$, then $v$ comes after $u$ in the ordering. In Step C-2, the weight of $\{u,v\}$, being an active forward edge of $u$,  is thus changed by at most $\tb k$, 
and,  as an active backward edge of $v$, changed by at most $\tg k$, due to Rules \ref{item:R3} and \ref{item:R7}. The result follows analogously as above, as $\tg > \tb$. 

The case when $\Ub = \emptyset$ follows from {  Corollary} \ref{lem:step2edgew} by noting that $V(G') = S$ and $U'\subset \Ub =\emptyset$, by Claim \ref{claim:G'}, and thus Step C does not change weights of edges in $E(B)$ or edges across $B$ and $S$. 
\end{proof}




\begin{lemma}\label{lem:disBandLv}
Suppose all the inequalities in Corollary \ref{cor:sizeUbLv} hold. 
There is a way to modify the edge weights 
abiding the Rules of the algorithm so that Goals (\ref{item:goal1})-(\ref{item:goal4}) are fulfilled. 
\end{lemma}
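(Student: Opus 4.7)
The plan is to process the vertices in the order from Claim \ref{claim:ordering}, greedily assigning an anchor set to each $v_i$ while respecting the Rules and Goal (\ref{item:goal1}). For the first vertex $v_1$, the current weight (after Step C-1) uniquely determines its initial anchor set, and Goal (\ref{item:goal1}) is trivially satisfied. For each subsequent $v_i$, I would produce a two-level count: the number of anchor sets that are \emph{reachable} from $v_i$'s current weight under the admissible edge modifications versus the number of anchor sets that are \emph{forbidden} (violating Goals (\ref{item:goal2})--(\ref{item:goal4})). The forbidden count is bounded by $|\Ub|$ for bad vertices (by Corollary \ref{cor:sizeUbLv}, $|\Ub| < 3ne^{-\dd^{2\alpha}/4}$) and by $|L(v_i)| \leq 2\ceiling{\sstar/k'}(n/\dd)$ for good vertices (by Lemma \ref{lem:closeH}). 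Given the parameter choices $\tb = \ceiling{48ne^{-\dd^{2\alpha}/4}/\sstar}$ and $\tg = 2\ceiling{(16n/(\dd k'))/\tb}\tb$, both bounds are dwarfed by the number of reachable anchor sets I can produce, provided $v_i$ has sufficiently many active edges.

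The execution works as follows for a non-terminal $v_i$. First I use one active forward edge of $v_i$ to select the desired congruence class $\overline{\mathcal{C}}_a(l)$ or $\mathcal{C}_a(l)$: since the admissible shifts $\{0,k,\ldots,\tg k\}$ (or $\{0,k,\ldots,\tb k\}$) on a single edge range over a complete residue system modulo $\tg k$ (resp.\ $\tb k$), one forward edge suffices for class selection. Then I use the remaining forward edges together with the two-option backward edges, each contributing $\{0,\pm\tg k\}$ or $\{0,\pm\tb k\}$, to translate the weight of $v_i$ within the chosen class in order to hit an anchor set $\sset_{v_i}$ (or $\ssetb_{v_i}$) that is not forbidden. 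Because each active edge of $v_i$ contributes an independent choice of size $\Theta(1)$ in units of $\tg k$, and because $v_i$ has at least $\sstar/2$ active edges in $G'$ by (\ref{dG'Bound}), the number of reachable anchor sets in the selected class is on the order of $\sstar/4$, comfortably larger than the forbidden counts. Rules \ref{item:R4}--\ref{item:R7} together with Claim \ref{claim:goal1} guarantee that already-assigned anchor sets are never disturbed, so Goal (\ref{item:goal1}) is preserved throughout.

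The main obstacle is handling the terminal vertices $t_i$, which have no active forward edge and hence no direct means of selecting their congruence class during their own processing. I would resolve this via the pairing structure from Claim \ref{claim:ordering}: when processing $r_i$, the edge $\{r_i,t_i\}$ is an active forward edge of $r_i$, and the shift placed on it affects \emph{both} weights simultaneously. I would use this one edge to pre-set $t_i$'s residue class, absorb its side-effect on $r_i$ using $r_i$'s other forward edges (at least $\sstar/2 - 1$ of them, each cycling through all residues mod $\tg k$ or $\tb k$), and then still have enough remaining flexibility to route $r_i$'s own weight into an unforbidden anchor set. When $t_i$ is later processed, only backward edges are available, but each shifts the weight by $\pm\tg k$ (or $\pm\tb k$), which preserves the pre-set congruence class while letting us cycle through the $\lambda$ parameter; since $t_i$ has at least $\sstar/2$ backward active edges, the reachable anchor sets within the class again dominate the forbidden count. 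The pairwise disjointness in Claim \ref{claim:ordering}(2) ensures no edge is asked to serve this coupling role twice.

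Finally, Goal (\ref{item:goal2}) is enforced by scheduling all $U'$-vertices before $S\setminus U'$-vertices (so the $\ssetb$'s assigned to $U' \subset \Ub$ are chosen consecutively and each new $\ssetb_{v_i}$ merely needs to avoid $|U'|\leq|\Ub|$ previously chosen ones modulo $\tg k$); Goals (\ref{item:goal3}) and (\ref{item:goal4}) are handled inside the $S\setminus U'$ phase, partitioned by the residue mod $k$ set in Step C-1 into the $\{4,5\}$-group (bad) and the $\{2,3\}$-group (good), the former needing to avoid at most $|\Ub|$ previous anchor sets and the latter at most $|L(v_i)|$. In each case the comparison of the reachable count $\Omega(\sstar)$ against the forbidden count completes the inductive step, producing the required assignment and proving the lemma.
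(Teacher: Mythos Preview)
Your overall architecture matches the paper's proof: process in the Claim~\ref{claim:ordering} order, use one forward edge for class selection, use the remaining active edges to translate within the class, and handle terminal vertices via the $\{r_i,t_i\}$ edge. Two execution gaps, however, keep the argument from going through as written.

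\textbf{The averaging step is missing.} You compare the per-class reachable count, ``on the order of $\sstar/4$'', directly against the \emph{global} forbidden counts $|\Ub|$ and $|L(v_i)|$. That comparison fails: for good vertices, $|L(v_i)|\le 2\ceiling{\sstar/k'}(n/\dd)\approx 2\sstar n/(\dd k')$, which in general far exceeds $\sstar/4$ (e.g., whenever $n/(\dd k')$ is large). The paper inserts a pigeonhole step: among the $\tg$ (resp.\ $\tb$) admissible classes reachable via the single forward edge, one class contains at most $|L(v_i)|/\tg$ (resp.\ $|\Ub|/\tb$) forbidden anchor sets; this division is exactly why $\tg$ and $\tb$ are chosen as they are, so that $|L(v_i)|/\tg$ and $|\Ub|/\tb$ drop below $\sstar/16$. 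Your phrase ``select the desired congruence class'' gestures toward this, but the crucial averaging and the correct inequality (the paper's (\ref{eq:choice}) and (\ref{eq:choice2})) are not stated. Relatedly, for $v_i\in U'$ the reachable count within a class is $\min(\tg/\tb,\sstar/2)$, not $\Theta(\sstar)$, because the $\ssetb$'s live modulo $\tg k$ and wrap around after $\tg/\tb$ steps of size $\tb k$.

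\textbf{The terminal pairing as you describe it does not work.} You claim $r_i$ has ``at least $\sstar/2-1$ other forward edges'' to absorb the side-effect on $r_i$ after $\{r_i,t_i\}$ is used to set $t_i$'s class. That is false: $r_i$ is the penultimate vertex in a reversed-BFS ordering of its component, and if $r_i$ is a good vertex in the last component $C_T$ of $G'[S]$, its only active forward edge is $\{r_i,t_i\}$ (good vertices have active edges only inside $G[S]$, and there are no $G'[S]$-edges to later components). Backward edges, which only shift by $\pm\tg k$ (resp.\ $\pm\tb k$), cannot change $r_i$'s congruence class, so $r_i$ is stuck in whatever class the single shift on $\{r_i,t_i\}$ lands it in. The paper handles this by a \emph{simultaneous} averaging: among the $\tb{+}1$ (resp.\ $\tg{+}1$) admissible shifts on $\{r_i,t_i\}$, some choice places \emph{both} $r_i$ and $t_i$ in classes with at most $2|\Ub|/\tb$ (resp.\ the analogous bound) forbidden anchor sets; then each of $r_i$ and $t_i$ separately uses its remaining $\sstar/2-1$ active edges (forward or backward, excluding $\{r_i,t_i\}$) to navigate within its class. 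Your decoupled scheme, setting $t_i$'s class first and then independently fixing $r_i$'s, cannot be carried out without additional forward edges at $r_i$.
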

\begin{proof}
Goal (\ref{item:goal1}) is fulfilled 
by Claim \ref{claim:goal1}. 
By the Rules, all edge weights in $G'$ are changed in Step C-2 by {multiples of $k$}. Thus  the modulo $k$ conditions in Goals (\ref{item:goal2}) to (\ref{item:goal4}) automatically hold by 
the preparatory measures from
Step C-1 (Claim \ref{claim:stepC1}). It remains to show 
that we can process the vertices $v_i$  for $i = 1,2,\dots$ complying with the Rules so that the rest of the 
conditions in Goals (\ref{item:goal2}) to (\ref{item:goal4}) hold.

Suppose we are analyzing a given 
$v_i$ which is not a terminal vertex 
nor a vertex immediately preceding 
a terminal vertex. 
Suppose at the end of Step C-1 the weight of $v_i$ equals $l_i$ modulo $k$. 
 
 Case 1. Suppose $v_i \in U'$.
We first choose any of its active forward edges, say $e$.
Due to adding to 
its weight 
admissible values in the set $\{0, k, 2k, \dots, \tb k\}$ (Rule \ref{item:R3}), the weight of $v_i$ runs through all the congruence classes $\overline{\mathcal{C}}_a(l_i)$ with $0 \leq a \leq \tb-1$. 
The 
sets $\ssetb_{u}$ fixed already for $u \in  U'$ prior to $v_i$ occupy at most $|U'| \leq |\Ub|$ congruence classes $\overline{\mathcal{C}}_a(l_i)$, with possible duplicates. By an averaging argument, there must be an $a^*$ such that at most
$|\Ub|/\tb $ of these 
sets $\ssetb_u$ 
are in the same congruence class $\overline{\mathcal{C}}_{a^*}(l_i)$. Fix such a congruence class  $\overline{\mathcal{C}}_{a^*}(l_i)$ and assure the weight of $v_i$ belongs in it
via adjusting the weight of $e$. 
We then modify the rest of the active forward edges of $v_i$ by adding $0$ or $\tb k$ to their weights, and  modify the weights of its active backward edges by $0$ or $\pm \tb k$ according to Rules \ref{item:R5} and \ref{item:R6}. Since $v_i$ is incident to at least $\sstar/2$ active edges in $G'$ (by Claim \ref{claim:G'}), the vertex weight of $v_i$ can thus be attributed 
at least
$\min(\tg / \tb, {  \sstar/2})$ 
consecutive  terms in the set $\{l_i+a^*k + \tb k\cdot \zz \mod \tg k\}=\overline{\mathcal{C}}_{a^*}(l_i)$.  
Since each of the at most $|\Ub|/\tb$ existing sets $\ssetb_u$ in $\overline{\mathcal{C}}_{a^*}(l_i)$ blocks two consecutive terms in $\overline{\mathcal{C}}_{a^*}(l_i)$, 
we can find an achievable ${  \overline{\sset}}_{v_i} \subset \overline{\mathcal{C}}_{a^*}(l_i)$ that is disjoint from all the prior $\ssetb_u$ (modulo $\tg k$) with $u \in {  U'}$ 
if 
\[ {    \min(\tg / \tb,}  {  \sstar/2}  {   ) >}  {   2|\Ub|/\tb. } 
\]
Since we assumed inequalities in Corollary \ref{cor:sizeUbLv} hold, $ |\Ub| \leq {   3} {  n \exp(-\delta^{2{   \alpha}}/{   4})}$, and thus: 
\begin{align} 
 {    
 \min(\tg/\tb,}  {  \sstar/2} {   ) > 4|\Ub|/\tb+ 2.}
\label{eq:choice}
\end{align}
Therefore we are done with $v_i$.

Case 2. Suppose $v_i \notin U'$.
The analysis is almost the same as in Case 1. 
Suppose $v_i$ is a good vertex.
By an averaging argument, there must be an $a^*$ such that the congruence class ${\mathcal{C}}_{a^*}(l_i)$ hosts at most
$|L(v_i)|/\tg $ prior sets $\sset_u$ with $u \in L(v_i)$. We insert the weight of $v_i$ in this congruence class via 
modifying  one of its active forward edges by adding to its weight one of the admissible integers in $\{0, k, 2k, \dots, \tg k\}$.
We then modify the weights of the rest of the active edges of $v_i$ by an integer in $\{0, \pm \tg k\}$ abiding the Rules. Since $v_i$ is incident to at least $\sstar/2$ active edges in $G'$ (by Claim \ref{claim:G'}), the vertex weight of $v_i$ can thereby be attributed at least 
{  $\sstar/2$} 
consecutive  terms in ${\mathcal{C}}_{a^*}(l_i) = \{l_i+a^*k + \tg k\cdot \zz \}$.  
Since each of the at most $|L(v_i)|/\tg$ existing sets $\sset_u$ in ${\mathcal{C}}_{a^*}(l_i)$
and with $u\in L(v_i)$
 blocks  two consecutive terms
 in ${\mathcal{C}}_{a^*}(l_i)$, 
we can find an achievable $\sset_{v_i} \subset {\mathcal{C}}_{a^*}(l_i)$ that is disjoint from all the prior  
{  $\sset_u$  with $u \in L(v_i)$} 
if 
\begin{equation}
{   \sstar/2} {    > } {   
2|L(v_i)|/\tg. } \nonumber
\end{equation} 
Thus we are done with $v_i$ by the bound $|L(v)| \leq 2{  \ceiling{\sstar/k'} (n/\dd)}$ 
in Corollary~\ref{cor:sizeUbLv}, which holds for every $v\in S$, and  implies in particular that: 
\begin{equation}
{   \sstar/2} {    > } {   
4}{   |L(v)|/\tg + 2. } \label{eq:choice2}
\end{equation} 
The case when $v_i \notin U'$ and $v_i \in \Ub$ follows by the same argument, with $L(v_i)$ replaced by $\Ub \setminus U'$. Thus we are done with $v_i$ if only
\[ {   \sstar/2} {   >} {  2|\Ub|/\tg} {   \geq} {   2|\Ub \setminus U'|/\tg. }
\]
The first inequality above follows however by (\ref{eq:choice}), as $\tg> \tb$, while the second one trivially holds.

We are left to show how to manage $r_i, t_i$ where $t_i$ is a terminal vertex and $r_i$ is the vertex preceding $t_i$ in the ordering. We apply a similar approach as above, analyzing the both vertices simultaneously. By Claim \ref{claim:ordering}, $\{ r_i, t_i\}$ is an edge in $G'$, which is an active forward edge for $r_i$. 

Case 1'. Suppose both $r_i, t_i$ are in $U'$. Thus both vertices are bad vertices. By an averaging argument,  
when we reach 
$r_i$, we can modify its forward edge $\{r_i, t_i\}$ so that the two new congruence classes of $r_i, t_i$ each hosts at most $2|U'|/\tb \leq 2|\Ub|/\tb$ prior sets $\ssetb_u$ with $u\in U'$, ignoring temporarily $r_i$ from the point of view of $t_i$. Fix such two new congruence classes for $r_i, t_i$ by choosing an admissible modification for 
the edge $\{r_i, t_i\}$. By the same argument as before, since
$
{    \min(\tg/\tb,} {  \sstar/2}{   ) >
4|\Ub|/\tb+2}$ 
{(  where ``$+2$'' might be necessary to adjust the weight of $t_i$ in the case when it belongs to the same congruence class as $r_i$)}, 
which holds by (\ref{eq:choice}),
via changing the weights of active backward edges of $r_i$ and all its other active forward edges except $\{r_i, t_i\}$ by values in $\{0, \pm \tb k\}$ complying with the Rules, there is an achievable choice of $\ssetb_{r_i}$ that is disjoint from all the sets $\ssetb_u$ for $u\in U'$ 
prior to $r_i$. 
Next
via changing the weights of the active backward edges of $t_i$ except $\{r_i, t_i\}$ by values in $\{0, \pm \tb k\}$ complying with the Rules, and by (\ref{eq:choice}) again, we can find an achievable {  $\ssetb_{t_i}$} that is disjoint from all the prior sets {  $\ssetb_u$} with  $u\in U'$ including {  $\ssetb_{r_i}$}.

Case 2'. Suppose neither $r_i$ nor $t_i$ are in $U'$, i.e., $r_i, t_i \in S\setminus U'$. If both $r_i, t_i$ are bad vertices,  we carry out the same reasoning as in Case 1' with $\tb$ replaced by $\tg$.  The inequality we need to guarantee in such a case 
is
${   \sstar/2}{    > 
4|\Ub|/\tg+2,}
$ which holds by~(\ref{eq:choice}).
If both $r_i$ and $t_i$ are good vertices, again we use the same reasoning as in Case 1' with $|\Ub|$ replaced by $\max(|L(r_i)|, |L(t_i)|)$ and {  $\tb$ replaced by $\tg$}. 
The inequality we need to guarantee this time is: 
\[{  
\sstar/2} {    > 
4\max(|L(r_i)|, |L(t_i)|)/\tg+2},
\]
which holds by~(\ref{eq:choice2}). 
If finally one of $r_i, t_i$ is good and the other one is bad, 
say among $\{r_i, t_i\}$, 
$u$ is the bad vertex and $v$ is the good one, by modifying the active forward edge $\{r_i, t_i\}$, we may assure 
each of the weights of $t_i$ and $r_i$ to be in a congruence class containing at most $(|L(v)|+|\Ub\setminus U'|) /\tg \leq (|L(v)|+|\Ub|)/\tg$ prior sets $\sset_{v'}$ with $v' \in L(v)$ (in the case of $v$) and $\sset_{u'}$ with $u' \in \Ub\setminus U'$ 
(in the case of $u$). 
Fix such a congruence class by choosing an appropriate admissible weight for $\{r_i, t_i\}$. By changing the weights of the remaining active edges of $v$ by values in $\{0, \pm \tg k\}$ complying with the Rules, we can find an achievable $\sset_v$ that is disjoint from all $\sset_{v'}$ with $v' \in L(v)$ prior to $v$ if 
\[
{  \sstar/2} > {  2(|L(v)|+|\Ub|)/\tg}.
\]
The inequality holds by~ (\ref{eq:choice}) and~(\ref{eq:choice2}), 
and thus $v$ can be successfully processed. 
To adjust the weight of 
the bad vertex $u$, we analogously as above change the weights of active edges of $u$ except the edge $\{u,v\} = \{r_i, t_i\}$ by values in $\{0, \pm \tg k\}$ complying with the Rules. 
By the same argument as for $v$, we can find a achievable set  $\sset_u$ 
that is disjoint from the ones of the other prior bad vertices not in $U'$ if 
${  \sstar/2} > {  2}(|L(v)|+|\Ub|)/\tg,$ which again holds by~ (\ref{eq:choice}) and~(\ref{eq:choice2}). 
This finishes the proof by the third condition in Claim~\ref{claim:ordering}.
\end{proof}

\subsection{Proof of Theorems \ref{thm:main2} and \ref{thm:main1}}
\begin{lemma}\label{lem:Lvnoequal}
Assume 
{   all statements in Corollary~\ref{cor:sizeUbLv} hold}.
Then for any two good vertices $u, v$ with $u \notin L(v)$, the weights of $v$ and $u$ are distinct after Step C
provided that $\dd$ and $n/\dd$ are sufficiently large. 
\end{lemma}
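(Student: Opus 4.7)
The strategy is to pin down the weighted degree of every good vertex $v\in \Ug\cap S_q$ inside a tight interval that matches the one implicit in the definition of $L(v)$. Writing $a_q := \ceiling{\ceiling{n/\dd}/(3k')}(k'+q)$, I would prove the strict bound
\[
\bigl(a_q - (13n/\dd^{1+\epsilon}+4)\bigr)\deg(v) \ <\ \fv(v)\ <\ \bigl(a_q + (13n/\dd^{1+\epsilon}+4)\bigr)\deg(v).
\]
Given this, the lemma follows quickly: if $u\in \Ug\cap S_p$ and $u\notin L(v)$, then at least one of~(\ref{eq:L1}) or~(\ref{eq:L2}) fails, and the analogous interval bound for $\fv(u)$ is then strictly separated from the one for $\fv(v)$, forcing $\fv(v)\neq \fv(u)$.

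To obtain the interval bound I would decompose $\fv(v)$ according to the type of neighbor. For edges from $v$ to $B\setminus \Ub$, Corollary~\ref{lem:step2edgew} gives edge weights in $[a_q-b,\, a_q+b]$ with $b=11n/\dd^{1+\epsilon}+2$ after Step~B, and such edges lie outside $G'$ so are untouched in Step~C. For edges to $\Ub\cap B$, the number of endpoints is at most $|\Ub|\leq 3n\exp(-\dd^{2\alpha}/4)$ by Claim~\ref{claim:sizeUb}, while each such edge weight is $O(n/\dd)$. For edges to $S$, Claim~\ref{claim:StepCweightchange} gives edge weights of size $O(n/\dd)$, and since $v$ is good we have $v\notin \ysq{q}$, yielding $\deg_S(v)\leq \deg(v)\sstar/\dd + \deg(v)^{1/2+\epsilon}$.

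Combining these, the discrepancy $\deg(v)\,a_q - \fv(v)$ splits into four contributions: the deficit $\deg_S(v)\cdot a_q$ from replacing $\deg_B(v)$ by $\deg(v)$; the bad-neighbor contribution of size $|\Ub|\cdot O(n/\dd)$; the intra-$(B\setminus\Ub)$-edge deviation, bounded by $\deg_B(v)\cdot b$; and the $S$-edge contribution of order $\deg_S(v)\cdot O(n/\dd)$. The ``main'' error $\deg_B(v)\cdot b$ already fits inside $(11n/\dd^{1+\epsilon}+2)\deg(v)$. The remaining errors are of order $\deg(v)\sstar a_q/\dd=O(\deg(v)\,n/\dd^{3/2-\epsilon-\alpha})$ and $\deg(v)^{1/2+\epsilon}\cdot O(n/\dd)$, both of which are $o(\deg(v)\,n/\dd^{1+\epsilon})$ thanks precisely to the constraints $2\epsilon+\alpha<1/2$ and $\epsilon<1/4$ from~(\ref{EpsilonAlpha}); and the $|\Ub|$-term is negligible because $|\Ub|$ is super-polynomially small in $\dd$. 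Hence all subdominant errors fit strictly inside the slack $2n/\dd^{1+\epsilon}+2$ between $b$ and $13n/\dd^{1+\epsilon}+4$, yielding the desired strict interval for $\dd$ and $n/\dd$ sufficiently large.

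The main obstacle is honest bookkeeping of the various edge-weight regimes: an edge from $v\in\Ug$ to a bad vertex $w\in B\cap \Ub$ may have been modified in Step~B (if $w\in\ysn\setminus(\yb{[0,2n)}\cup \ys)$) or left at the Step~A value (if $w\in \yb{[0,2n)}\cup \ys$); and edges from $v$ to $U'\setminus S$ are untouched in Step~C on the $v$-side but can still be modified when the $U'$-endpoint is processed. Since all these irregular contributions are crudely bounded by either $|\Ub|$ or $\deg_S(v)$ times $O(n/\dd)$, and both of those quantities are negligible compared with the principal term $\deg_B(v)\cdot a_q$, worst-case estimates suffice. The one delicate point is securing the strict (rather than merely non-strict) inequality in the interval for $\fv(v)$, which is guaranteed by the additive $+2$ vs.\ $+4$ slack deliberately built into the constants defining $L(v)$.
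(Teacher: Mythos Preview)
Your overall strategy is correct and mirrors the paper's proof: bound the final weight of every $v\in \Ug\cap S_q$ strictly inside the interval $(a_q\pm(13n/\dd^{1+\epsilon}+4))\deg(v)$, and then read off the conclusion directly from Definition~\ref{def:uvclose}. Your treatment of the $S$-edge contribution and the $\deg_S(v)$ deficit via $v\notin Z_{S_q}$ is exactly right, and your use of~(\ref{EpsilonAlpha}) to absorb those terms into the slack is the same as the paper's.

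There is, however, a genuine gap in your handling of the edges from $v$ to $\Ub\cap B$. You bound this contribution by $|\Ub|\cdot O(n/\dd)$ and assert it is negligible ``because $|\Ub|$ is super-polynomially small in $\dd$''. But Corollary~\ref{cor:sizeUbLv} only gives $|\Ub|\leq 3n\,e^{-\dd^{2\alpha}/4}$, which depends on $n$: when $\dd\leq \sqrt{n}$ and $n$ is large (say $n\asymp e^{\dd^{2\alpha}/2}$), the quantity $|\Ub|\cdot(n/\dd)$ can exceed any fixed multiple of $\deg(v)\cdot n/\dd^{1+\epsilon}$, so your crude estimate does \emph{not} fit into the slack $2n/\dd^{1+\epsilon}+2$. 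The paper avoids this by never separating out $\Ub\cap B$. Instead, it bounds the Step~C modification on \emph{every} edge from $v\in\Ug$ to $B$ uniformly by $\x(\tb k+2)$ (Claim~\ref{claim:StepCweightchange}), and then uses a case split: if $\dd>\sqrt{n}$ then $\Ub=\emptyset$ so $\x=0$; if $\dd\leq\sqrt{n}$ then $\tb k<n/\dd^{1+\epsilon}$. Either way every $v$--$B$ edge weight lies in $[a_q-(12n/\dd^{1+\epsilon}+4),\,a_q+(12n/\dd^{1+\epsilon}+4)]$, and the bad-neighbor count plays no role. Your argument is easily repaired by making this same observation, but the justification you wrote does not go through as stated.
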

\begin{proof}
Suppose $v \in S_q{   \cap \Ug}$ for some $1\leq q \leq k'$. By Corollary \ref{lem:step2edgew}, prior to Step C, the weight of any edge $e$ between $v$ and its neighbor in $B$ is in the interval \[ \left[\ceiling{\ceiling{\frac{n}{\dd}}\frac{1}{3k'}}(k'+q) - \left(\frac{{   11}n}{\dd^{1+\epsilon}}+2\right),\ \ceiling{\ceiling{\frac{n}{\dd}}\frac{1}{3k'}}(k'+q)+ \left(\frac{{   11}n}{\dd^{1+\epsilon}}+2\right)\right].
\] 
Let $\x = 1$ if $\Ub \neq \emptyset$ and $\x = 0$ if $\Ub = \emptyset$.  By Claim~\ref{claim:StepCweightchange}, 
since $v\in \Ug$, the weight of an edge $e$ between $v$ and $B$ is changed by at most $\x (\tb k +2)$ during Step C. 
Thus  the final weight of $e$ is in the interval
\[ \left[\ceiling{\ceiling{\frac{n}{\dd}}\frac{1}{3k'}}(k'+q) - \left(\frac{{   11}n}{\dd^{1+\epsilon}} + \x \tb k +2\x+2\right),\ \ceiling{\ceiling{\frac{n}{\dd}}\frac{1}{3k'}}(k'+q)+ \left(\frac{{   11}n}{\dd^{1+\epsilon}}+ \x\tb k +2\x+2\right)\right].
\] 
Since  ${   \tb = \ceiling{{   48}ne^{-\dd^{2{   \alpha}}/{   4}}/\sstar}}$, 
if $\dd \leq  \sqrt{n}$ and $\dd$ is sufficiently large, then $n/\dd^{1+\epsilon} >\tb k$. 
If $\dd > \sqrt{n}$ in turn, then 
{   by the last statement in Corollary~\ref{cor:sizeUbLv}, $\Ub = \emptyset$ and hence}
$\x=0$.  
Thus the weight of $e$ is in any case in the interval
\[ \left[\ceiling{\ceiling{\frac{n}{\dd}}\frac{1}{3k'}}(k'+q) - \left(\frac{{   12}n}{\dd^{1+\epsilon}} + 4 \right),\ \ceiling{\ceiling{\frac{n}{\dd}}\frac{1}{3k'}}(k'+q)+ \left(\frac{{   12}n}{\dd^{1+\epsilon}}+4 \right)\right].
\] 
Therefore, the weight of $v$ coming from its neighbors in $B$ equals at least
\begin{align}
& \left(\ceiling{\ceiling{\frac{n}{\dd}}\frac{1}{3k'}}(k'+q) - \left(\frac{{   12}n}{\dd^{1+\epsilon}} + 4 \right)\right)\deg_B(v).  \label{eq:vgoodBlow} 
\end{align}
Similarly, the weight of $v$ coming from its neighbors in $B$ equals at most
\begin{align}
& \left(\ceiling{\ceiling{\frac{n}{\dd}}\frac{1}{3k'}}(k'+q)  + \left(\frac{{   12}n}{\dd^{1+\epsilon}} + 4 \right)\right)\deg_B(v).  \label{eq:vgoodBup}
\end{align}
{   Note that by definition, for large enough $\delta$ and $n/\delta$, $k < k'/960 < 
10^{-5}n/\delta$, and thus 
\begin{align}
& \tg k < \frac{32 n}{\delta k'}k+2\tb k < \frac{1}{30}\frac{n}{\delta} +\frac{96ne^{-\delta^{2\alpha}/4}k}{\delta^{1/2+\epsilon+\alpha}} +2 k < \frac{1}{20}\frac{n}{\delta} . \label{kk'Bounds}
\end{align}
Therefore, by~(\ref{kk'Bounds})} and {   Claim~\ref{claim:StepCweightchange}}, 
weights of edges in $E(S)$ are  {   contained in the interval 
\begin{equation}
\left[\ceiling{\ceiling{\frac{n}{\dd}}\frac{1}{2}}-2\tg k-2, \ceiling{\ceiling{\frac{n}{\dd}}\frac{1}{2}}+2\tg k+2\right] \subset [1, {   n/\dd}). 
\label{eq:esinterval}
\end{equation}}
  Since $v$ is a good vertex, by the definition of $\ysq{q}$, 
\begin{equation}
 |\deg_B(v) - {   \deg(v)(\dd-\sstar)}/\dd| {   \leq} \deg(v)^{1/2+\epsilon}, 
 \label{eq:degS}
\end{equation}  
and thus,
\begin{equation}
 |\deg_S(v) - \deg(v)\sstar/\dd| {   \leq} \deg(v)^{1/2+\epsilon}. \label{eq:degS2}
\end{equation}  

By 
(\ref{eq:vgoodBlow}), (\ref{eq:esinterval}), and (\ref{eq:degS}), we  conclude that for $\delta$ and $n/\delta$ large enough,  the weight of $v$ equals at least
\begin{align}
&  \left(\ceiling{\ceiling{\frac{n}{\dd}}\frac{1}{3k'}}(k'+q)  - \left(\frac{{   12}n}{\dd^{1+\epsilon}} + 4 \right)\right)\deg_B(v) + 0 \cdot \deg_S(v) \nonumber \\
 {   \geq} & \left(\ceiling{\ceiling{\frac{n}{\dd}}\frac{1}{3k'}}(k'+q)  - \left(\frac{{   12}n}{\dd^{1+\epsilon}} + 4 \right)\right)\left(\frac{\deg(v)(\dd-\sstar)}{\dd} - \deg(v)^{1/2+\epsilon}\right) \nonumber \\
\geq & \deg(v)\left( 1- \frac{\sstar}{\dd}- \frac{1}{\dd^{1/2-\epsilon}}  \right)\left(\ceiling{\ceiling{\frac{n}{\dd}}\frac{1}{3k'}}(k'+q)  - \left(\frac{{   12}n}{\dd^{1+\epsilon}} + 4 \right)\right)\nonumber \\
> & \deg(v)\left( 1- \frac{2\sstar}{\dd} \right)\left(\ceiling{\ceiling{\frac{n}{\dd}}\frac{1}{3k'}}(k'+q)  - \left(\frac{{   12}n}{\dd^{1+\epsilon}} +4 \right)\right) \nonumber\\
> & \deg(v)\left(\ceiling{\ceiling{\frac{n}{\dd}}\frac{1}{3k'}}(k'+q)  - \left(\frac{{   12}n}{\dd^{1+\epsilon}} + 4 \right) - \frac{2\sstar}{\dd}\frac{n}{\dd}\right)\nonumber \\
> &  \deg(v)\left(\ceiling{\ceiling{\frac{n}{\dd}}\frac{1}{3k'}}(k'+q)  - \left(\frac{{   13}n}{\dd^{1+\epsilon}} + 4 \right)\right), \label{eq:vSqlow}
\end{align}
where the last inequality follows by~(\ref{EpsilonAlpha}).
Similarly, by (\ref{eq:vgoodBup}), (\ref{eq:esinterval}) 
and~(\ref{eq:degS2}), when $\dd$ and $n/\dd$ are sufficiently large, the weight of $v$ equals at most
\begin{align}
& \left(\ceiling{\ceiling{\frac{n}{\dd}}\frac{1}{3k'}}(k'+q)  + \left(\frac{{   12}n}{\dd^{1+\epsilon}} + 4 \right)\right)\deg_B(v) + \deg_S(v){   \frac{n}{\dd}}\nonumber \\
\leq & \left(\ceiling{\ceiling{\frac{n}{\dd}}\frac{1}{3k'}}(k'+q)  + \left(\frac{{   12}n}{\dd^{1+\epsilon}} + 4 \right)\right)\deg(v) + \left(\deg(v)\frac{\sstar}{\dd}+\deg(v)^{1/2+\epsilon}\right){   \frac{n}{\dd}}\nonumber\\
\leq & \left(\ceiling{\ceiling{\frac{n}{\dd}}\frac{1}{3k'}}(k'+q)  + \left(\frac{{   12}n}{\dd^{1+\epsilon}} + 4 \right)\right)\deg(v) + \deg(v)\left(\frac{\sstar}{\dd} + \frac{1}{\dd^{1/2-\epsilon}}\right){   \frac{n}{\dd}}\nonumber \\
<& \left(\ceiling{\ceiling{\frac{n}{\dd}}\frac{1}{3k'}}(k'+q)  + \left(\frac{{   12}n}{\dd^{1+\epsilon}} + 4 \right)  
+ \frac{2\sstar}{\dd}{   \frac{n}{\dd}}
\right)\deg(v)\nonumber \\
< &   \left(\ceiling{\ceiling{\frac{n}{\dd}}\frac{1}{3k'}}(k'+q)  + \left(\frac{{   13}n}{\dd^{1+\epsilon}} + 4 \right)  
\right)\deg(v), \label{eq:vSqup}
\end{align}
where the last inequality again follows by~(\ref{EpsilonAlpha}).
Therefore (\ref{eq:vSqlow}) and (\ref{eq:vSqup}) provide the lower 
and upper bounds on the weight of $v\in S_q \cap \Ug$ after Step C. Analogous bounds hold also by the same reasoning for any $u\in S_p \cap \Ug$, where  $1\leq p, q \leq k'$. 
Thus if the weights of $v$ and $u$ are equal 
after Step C, both of the following two inequalities must hold: 
\begin{align*}
\left(\ceiling{\ceiling{\frac{n}{\dd}}\frac{1}{3k'}}(k'+q)  + \left(\frac{{   13}n}{\dd^{1+\epsilon}} + 4 \right)  
\right)\deg(v) > 
\left(\ceiling{\ceiling{\frac{n}{\dd}}\frac{1}{3k'}}(k'+p)  - \left(\frac{{   13}n}{\dd^{1+\epsilon}} + 4\right)  
\right)\deg(u); \\
\left(\ceiling{\ceiling{\frac{n}{\dd}}\frac{1}{3k'}}(k'+q)  - \left(\frac{{   13}n}{\dd^{1+\epsilon}} + 4 \right)  
\right)\deg(v) <
\left(\ceiling{\ceiling{\frac{n}{\dd}}\frac{1}{3k'}}(k'+p)  + \left(\frac{{   13}n}{\dd^{1+\epsilon}} + 4 \right)  
\right)\deg(u).
\end{align*}
These two conditions are equivalent to $u\in L(v)$ and $v\in L(u)$, cf. Definition~\ref{def:uvclose}. Therefore if $u\notin L(v)$, then the weights of $u$ and $v$ cannot be the same after Step C. 
\end{proof}

We are finally ready to argument that Theorem~\ref{thm:main2} indeed holds.

If  $\dd$ or $n/\dd$ are small, say $\dd < {   c}$ or $n/\dd < {   c}$ for some absolute constant ${   c}$, we make use of the result in ~\cite{KKP}, which implies that $s(G) \leq (n/\dd)(1+ 5) + 6$, and therefore 
$s(G) \leq \frac{n}{\dd} + 5{   c} + 6$ in the case when $n/\dd <{   c}$, and $s(G) \leq \frac{n}{\dd}\left(1 + \frac{5{   c}^{\epsilon}}{\dd^{\epsilon}}\right) + 6$ in the case when $\dd <{   c}$.

From now on we can assume that $\dd$ and $n/\dd$ are sufficiently large. 
Thus with positive probability, all the inequalities in Corollary~\ref{cor:sizeUbLv} hold. In particular, $\Ub = \emptyset$ if $\dd > \sqrt{n}$. 
We first show that all the vertex weights are distinct after Step C. 
Most vertices in $B$ receive distinct weights in $\zz'$ within Step B, cf. Corollary~\ref{lem:step2edgew}. The remaining ones are distinguished in Step C by means of weights outside $\zz'$.
By Lemma \ref{lem:disBandLv}, Goals (\ref{item:goal1}) to (\ref{item:goal4}) can be achieved in Step C. By the Goals, it is clear that all the vertex weights are distinct, with the only possible exception 
between  vertices $v \in \Ug$ and good vertices not in $L(v)$. However, Lemma \ref{lem:Lvnoequal} shows that if $u \notin L(v)$, then the weights of $u,v$ cannot be identical. Thus we have shown that all vertices in $G$ indeed have distinct weights.  
 
We next bound the values of the final edge weights 
after Step C. 
By Claim \ref{claim:StepCweightchange}, the fact that $\tg {   >} \tb$, and (\ref{eq:esinterval}), weights of edges in $G[S]$ and $G'[B]$ are contained in $[1, {   n/\dd})$. Other edges in $B$ but not in $G'$ do not get weights changed during Step C  (by Claim~\ref{claim:StepCweightchange}). Thus by
Corollary \ref{lem:step2edgew}, these edges in $B$ have weights either $1$ or $\ceiling{n/\dd}+a'$. As for the edges across $B$ and $S$, prior to Step C, by (\ref{eq:f1f2intBS}) in Corollary~\ref{lem:step2edgew}, their weights were in the interval 
 $ \left[\ceiling{\ceiling{\frac{n}{\dd}}\frac{1}{3k'}}k'- \left(\frac{{   11}n}{\dd^{1+\epsilon}}+2\right),\ceiling{\ceiling{\frac{n}{\dd}}\frac{1}{3k'}}2k'+ \left(\frac{{   11}n}{\dd^{1+\epsilon}}+2\right)\right] \subset \left[\frac{n}{{   4}\dd}, \frac{3n}{4\dd} \right]$ after Step B. During Step C, by Claim \ref{claim:StepCweightchange} again,  their weights are changed by at most $
\tg k + \tb k + 2 {    < \frac{n}{4\delta}}$, by~(\ref{kk'Bounds}).
Thus after Step C, the weights between $B$ and {   $S$} lie in the interval
$
[1, n/\dd).
$
Thus we have shown that all final edge weights are positive, with the maximum weight at most  \[ \ceiling{n/\dd}+a' \leq \ceiling{\frac{n}{\dd}}+ \ceiling{\frac{{   7} n}{{   \dd}k}}.\] 
By our choice of $k$, Theorem \ref{thm:main2} holds, i.e. there are absolute constants $C_1, C_2$ (for any fixed $\epsilon \in (0, 0.25)$) such that $s(G)\leq \frac{n}{\delta}+\frac{C_1n}{\delta^{1+\epsilon}}+C_2$. To see why Theorem \ref{thm:main1} holds, note that when $\dd^{1+\epsilon} {   \geq} n$, then 
$\frac{C_1n}{\delta^{1+\epsilon}}$  is upper bounded by $C_1$.


\section{Conclusion}
In this paper, we proved a uniform upper bound $s(G) \leq \frac{n}{\dd}(1+C_1/\dd^{\gamma}) + C_2$ where $C_1, C_2$ are absolute constants for any $\gamma \in (0, 0.25)$.  This confirms the Faudree-Lehel Conjecture for $\dd \geq n^{1/(1-\gamma)}$.  
We did not strive to optimize all the constants in our result. We believe that with a slightly modified construction one should be in particular able to magnify the 0.25 upper bound on $\gamma$. Our bound matches the bound in Conjecture \ref{conj:main2} asymptotically when $\delta$ is large. 
It would also be interesting to prove a bound of the form $s(G) \leq \frac{n}{\dd}(1+o_n(1)) + C$ for some absolute constant $C$.


\bibliographystyle{plain}
\bibliography{sample3}

\end{document}